\newcommand{\bd}{\begin{displaymath}}
\newcommand{\ed}{\end{displaymath}}
\newcommand{\bcd}{\begin{CD}}
\newcommand{\ecd}{\end{CD}}
\newcommand{\on}{\operatorname}
\title[Mirror symmetry for parabolic Higgs bundles via $p$-adic integration]{Mirror symmetry for parabolic Higgs bundles via $p$-adic integration}
\author{Shiyu Shen}
\address{The Institute of Science and Technology Austria\\Klosterneuburg 3400, Austria}
\email{shiyu.shen@ist.ac.at}
\thanks{
Shiyu Shen has received funding from the European Union's Horizon 2020 research and innovation program under the Marie Skłodowska-Curie grant agreement No. 101034413.
}
\numberwithin{equation}{section}
\newtheorem{theorem}{Theorem}[section]
\newtheorem{example}[theorem]{Example}
\newtheorem{lemma}[theorem]{Lemma}
\newtheorem{prop}[theorem]{Proposition}
\newtheorem{corollary}[theorem]{Corollary}
\theoremstyle{definition}
\newtheorem{definition}[theorem]{Definition}
\theoremstyle{remark}
\newtheorem{remark}[theorem]{Remark}
\begin{document}
\maketitle

\begin{abstract}
Applying the technique of $p$-adic integration, we prove the topological mirror symmetry conjecture of Hausel-Thaddeus for the moduli spaces of (strongly) parabolic Higgs bundles for the structure groups $\on{SL}_n$ and $\on{PGL}_n$, building on previous work of Groechenig-Wyss-Ziegler on the non-parabolic case. We also prove the $E$-polynomial of the smooth moduli space of parabolic $\on{GL}_n$-Higgs bundles is independent of the degree of the underlying vector bundles.
\end{abstract}

\section{Introduction}
\subsection{Topological mirror symmetry for Higgs bundles}
The notion of a Higgs bundle was introduced by Hitchin in his seminal paper \cite{self dual} with the motivation of studying certain differential equations from gauge theory. These geometric objects have an extremely rich structure and draw intense research interest from different areas of mathematics. One of the most intriguing feature of the moduli space $\mathcal{M}$ of Higgs bundles over a smooth projective curve $X$ is that it has the structure of a completely integrable system. This structure is induced by the so-called Hitchin map from $\mathcal{M}$ to an affine space $\mathcal{A}$: coordinate functions on $\mathcal{A}$ induce Poisson-commuting functions on $\mathcal{M}$, and a general fiber of this map is an abelian variety, i.e. a compact torus which is also a complex algebraic variety. 

Let $G$ be a complex reductive group and $\breve{G}$ its Langlands dual group. The moduli space $\mathcal{M}_G$ of $G$-Higgs bundles and the moduli space $\mathcal{M}_{\breve{G}}$ of $\breve{G}$-Higgs bundles are mapped to the same affine space $\mathcal{A}$ via the Hitchin map: 
\bd
\xymatrix{
\mathcal{M}_G\ar[dr]^{h_G}&& \mathcal{M}_{\breve{G}}\ar[dl]_{h_{\breve{G}}}\\
&\mathcal{A}.
}
\ed
These two integrable systems are dual to each other, in the sense that for a general $a\in \mathcal{A}$, the fibers $h_G^{-1}(a)$ and $h_{\breve{G}}^{-1}(a)$ are dual abelian varieties\footnote{Strictly speaking, these Hitchin fibers are gerbes over abelian varieties, and this duality should be interpreted as a duality between commutative group stacks.}. This statement is first proved by Hausel and Thaddeus \cite{HT} for $G=\on{SL}_n$ and $\breve{G}=\on{PGL}_n$, later generalized to all $G$ by
Donagi and Pantev \cite{DP}. Motivated by this duality between Hitchin integrable systems and the Strominger-Yau-Zaslow picture of mirror symmetry, Hausel and Thaddeus proposed in \cite{HT} the so-called topological mirror symmetry conjecture, in which they predict a correspondence between the (appropriately defined) Hodge numbers of those moduli spaces $\mathcal{M}_{\on{SL}_n}^d$ and $\mathcal{M}_{\on{PGL}_n}^e$ (here $d$ and $e$ stands for the degree of the underlying vector bundles, and we assume they are coprime to $n$). This conjecture (in the non-parabolic setting) has been proved by Groechenig, Wyss and Ziegler \cite{MWZ} through the application of $p$-adic integration to the Hitchin integrable systems. Developing on the beautiful connections between $p$-adic integration, point-counting and Betti numbers established by Weil, they reduce the job of matching the Hodge numbers of $\mathcal{M}_{\on{SL}_n}^d$ and $\mathcal{M}_{\on{PGL}_n}^e$ to matching the integral of certain measures on the $p$-adic version of the moduli spaces. The structure of the Hitchin fibrations allows this comparison to be further reduced to a number theoretic computation concerning dual abelian varieties over local fields. See Section \ref{section: p-adic integration on Hitchin systems} for the structure of this argument. In \cite{MS}, the authors provided a separate proof of this conjecture using sheaf-theoretic methods. In \cite{GO}, the authors established the topological mirror symmetry for parabolic Higgs bundles of rank $n=2,3$ with full flags. 

\subsection{Mirror symmetry for parabolic Higgs bundles}
The goal of this paper is to prove the topological mirror symmetry conjecture for the moduli spaces of parabolic Higgs bundles. Let $X$ be a smooth projective curve over the field $\mathbb{C}$ of complex numbers. Let $D=q_1+q_2+\cdots+q_m$ be an effective reduced divisor on $X$, and let $P_D = (P_1, P_2, \dots, P_m)$ be an ordered $m$-tuple of parabolic subgroups of $\on{GL}_n(\mathbb{C})$. We fix a line bundle $M$ on $X$ that either satisfies $\on{deg}M>2g-2$ or equals the canonical bundle $K$. We consider the moduli space $\mathcal{M}_{\on{GL}_n}^{d, \boldsymbol{\alpha}}$ of semistable $M$-twisted parabolic $\on{GL}_n$-Higgs bundles of degree $d$ on $X$, the moduli space $\mathcal{M}_{\on{SL}_n}^{d,\boldsymbol{\alpha}}$ of semistable $M$-twisted parabolic ``$\on{SL}_n$''-Higgs bundles of degree $d$ and the moduli stack $\mathcal{M}_{\on{PGL}_n}^{d,\boldsymbol{\alpha}}$ of semistable $M$-twisted parabolic $\on{PGL}_n$-Higgs bundles of ``degree'' $d$. The precise definition of those moduli spaces are given in Section \ref{section: moduli spaces of parabolic Higgs bundles}. Here $\boldsymbol{\alpha}$ is a set of real numbers called parabolic weights which plays a role in defining the notion of stability on those objects. We say $\boldsymbol{\alpha}$ is generic for degree $d$ if a semistable parabolic vector bundle of degree $d$ is automatically stable. 

Let $\boldsymbol\alpha$ be a set of parabolic weights generic for degree $d$. With this assumption, both $\mathcal{M}_{\on{GL}_n}^{d, \boldsymbol{\alpha}}$ and $\mathcal{M}_{\on{SL}_n}^{d,\boldsymbol{\alpha}}$ are smooth quasi-projective varieties, and $\mathcal{M}_{\on{PGL}_n}^{d,\boldsymbol{\alpha}}$ is a smooth Deligne-Mumford stack. All three moduli spaces admit the Hitchin map to an affine space $\mathcal{A}_P$. The generic fiber of the parabolic Hitchin map is described in \cite{shen}: there exists an open subset $\mathcal{A}_P^0\subset \mathcal{A}_P$ of the Hitchin base such that the Hitchin fiber for the $\on{GL}_n$-moduli space $\mathcal{M}_{\on{GL}_n}^{\boldsymbol\alpha}$ is isomorphic to the relative Picard scheme $\on{Pic}(\Sigma/\mathcal{A})$, where $\Sigma/\mathcal{A}$ is a flat family of smooth projective curves constructed from the family of spectral curves by consecutive blow-ups, see Theorem \ref{theorem: spectral data GLn}. For the $\on{SL}_n$-moduli space $\mathcal{M}_{\on{SL}_n}^{d,\boldsymbol{\alpha}}$ and the $\on{PGL}_n$-moduli stack $\mathcal{M}_{\on{PGL}_n}^{d,\boldsymbol{\alpha}}$, the generic Hitchin fibers over $\mathcal{A}_P^0$ are torsors for dual abelian schemes. 

Let $\boldsymbol\alpha$ (resp. $\boldsymbol\alpha'$) be a set of parabolic weights generic for degree $d$ (resp. $e$), we prove the following\footnote{Our proof relies on the assumption that a certain open subset of the parabolic Hitchin base defined in \cite{shen} is non-empty. See Remark \ref{rk: singular spectral curves}. This is always satisfied when $g_X\geq 2$. When the genus is $1$ or $0$, we need to put restrictions on the number of marked points and the type of parabolic subgroups involved, see \cite{shen} Section 1.2.}

\begin{theorem}[cf. Theorem \ref{thm: main theorem} and Theorem \ref{thm: main theorem independence} (a)]\label{thm intro: parabolic mirror symmetry}
    
    (a) We have an equality  $E(\mathcal{M}_{\on{SL}_n}^{d, \boldsymbol\alpha}; u,v)=E_{st}(\mathcal{M}_{\on{PGL}_n}^{e, \boldsymbol\alpha'},\alpha_d; u,v)$ of $E$-polynomials. The right-hand side of the equality stands for the twisted stringy $E$-polynomial with respect to a gerbe $\alpha_d$ on $\mathcal{M}^{e, \boldsymbol\alpha'}_{\on{PGL}_n}$.
    
    (b) The $E$-polynomial $E(\mathcal{M}_{\on{GL}_n}^{d, \boldsymbol\alpha}; u,v)$ is independent of the degree $d$ and the parabolic weights $\boldsymbol\alpha$. 
\end{theorem}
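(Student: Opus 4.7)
\emph{Proof proposal.} The plan is to carry out the $p$-adic integration program of Groechenig--Wyss--Ziegler \cite{MWZ} in the parabolic setting, using the spectral description of the generic parabolic Hitchin fibration from Theorem \ref{theorem: spectral data GLn} (and its $\on{SL}_n$/$\on{PGL}_n$ variants) as the geometric input that replaces the classical BNR correspondence of the unparabolic case.

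First I would spread the three moduli spaces, their Hitchin maps to $\mathcal{A}_P$, and the gerbe $\alpha_d$ to models over $\operatorname{Spec}\mathcal{O}_K$ for a suitable number field $K$. By Katz's theorem on polynomial-count varieties, it then suffices to compare point counts over almost all finite residue fields, twisted on the $\on{PGL}_n$ side by the character associated to $\alpha_d$. Weil's comparison rewrites each count, for $F$ a local field with residue field $\mathbb{F}_q$, as the integral over $\mathcal{A}_P(\mathcal{O}_F)$ of the measure whose density at $a$ records the ($\alpha_d$-twisted, on the $\on{PGL}_n$ side) $p$-adic volume of the Hitchin fiber over $a$.

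Next I would restrict the base integral to $\mathcal{A}_P^0(\mathcal{O}_F)$, which by Theorem \ref{theorem: spectral data GLn} is exactly the locus over which the $\on{GL}_n$-fiber is $\on{Pic}(\Sigma/\mathcal{A}_P^0)$ for the smooth family of blown-up spectral curves, and over which the $\on{SL}_n$ and $\on{PGL}_n$ fibers are torsors for the corresponding dual abelian subscheme and quotient. On this locus the Tate/Poincar\'e duality argument of \cite{MWZ} applies pointwise: the $p$-adic volume of the $\on{SL}_n$ fiber matches the $\alpha_d$-twisted stringy volume of the $\on{PGL}_n$ fiber, once one checks that the gerbe $\alpha_d$ supplies the Tate-pairing character that detects the torsor class of the $\on{SL}_n$ fiber at each base point; this would yield (a) on the generic locus. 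For (b) one notes that the $\on{GL}_n$-fibers over $\mathcal{A}_P^0$ are \emph{all} torsors for the same abelian scheme $\on{Pic}^0(\Sigma/\mathcal{A}_P^0)$ regardless of $d$ and $\boldsymbol{\alpha}$ (only the stability analysis defining $\mathcal{A}_P^0$ depends on $\boldsymbol{\alpha}$), and since the $p$-adic volume of any torsor for an abelian variety equals that of the variety, the integrand is degree- and weight-independent.

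The remaining step is to argue that $\mathcal{A}_P \setminus \mathcal{A}_P^0$ contributes negligibly: as a proper closed subscheme its $\mathcal{O}_F$-points have $p$-adic volume of strictly smaller order in $q^{-1}$, which vanishes in the limit computing $E$-polynomial coefficients, and here one uses the non-emptiness of $\mathcal{A}_P^0$ guaranteed by the hypotheses recalled in the footnote. I expect the principal obstacle to be the generic duality verification: the blow-up construction of $\Sigma$ introduces exceptional divisors carrying the parabolic flag data, and one must check both that the Picard scheme of this blown-up family carries the autoduality matching the Hitchin-fiber autoduality of \cite{HT, DP} extended to the parabolic case, and that the gerbe $\alpha_d$ has Tate character precisely detecting the torsor class of $\mathcal{M}_{\on{SL}_n}^{d,\boldsymbol{\alpha}}$ uniformly over $\mathcal{A}_P^0$.
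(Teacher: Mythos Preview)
Your broad strategy matches the paper's, but there is a genuine gap in the treatment of gauge forms, and this is precisely where the parabolic case diverges from \cite{MWZ}.

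You implicitly assume that the $p$-adic volume of a Hitchin fiber is intrinsic, saying for (b) that ``the $p$-adic volume of any torsor for an abelian variety equals that of the variety.'' But the volume depends on the chosen gauge form on $\mathcal{M}^{d,\boldsymbol\alpha}$, and there is no a priori reason that the fiberwise restrictions of $\omega_{d_1}$ and $\omega_{d_2}$ transport to the \emph{same} translation-invariant form on $\on{Pic}^0(\Sigma/\mathcal{A}_P^0)$. In the non-parabolic case this is handled via the regular locus $\mathcal{M}^{reg}$, which is a torsor for a group scheme over the entire Hitchin base and has complement of codimension $\geq 2$; the gauge form is built there and automatically has the required compatibility. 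The paper emphasizes that in the parabolic setting this regular locus either does not exist (unless all parabolics are Borel) or has too large a complement. The paper therefore constructs $\omega_d$ by an entirely different route---relating $\mathcal{M}$ to the cotangent bundle $T^*\mathfrak{N}$ of a moduli space of parabolic bundles with level structure and descending the symplectic volume form---and then invokes the Bezrukavnikov--Braverman compatibility \cite{BB} of tautological $1$-forms under Hecke modifications to prove that the induced relative forms $\tilde{\omega}'_d$ on $\check{\mathcal{P}}$ are independent of $d$ (Proposition \ref{prop: comparision between degrees}). Without this, your fiberwise comparison in (b) is unjustified, and the same gap affects (a), since comparing the $\on{SL}_n$ and $\on{PGL}_n$ integrals in Theorem \ref{main thm: p-adic integral for Hitchin} requires $\phi^*\tilde{\omega}'_e=\tilde{\omega}'_d$.

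Two further points. First, your claim that a torsor and its underlying abelian variety have equal volume fails when the torsor has no $F$-point, in which case its volume is zero; the paper must therefore separately verify, using the local geometry of $\Sigma_{a_F}$ above the marked point, that $\on{Pic}^{d_1-d_{\boldsymbol m}}(\Sigma_{a_F})(F)\neq\emptyset \Leftrightarrow \on{Pic}^{d_2-d_{\boldsymbol m}}(\Sigma_{a_F})(F)\neq\emptyset$ whenever both $d_i$ admit generic weights (see the argument around (\ref{eq: picard non empty 2})). Second, your handling of $\mathcal{A}_P\setminus\mathcal{A}_P^0$ as ``strictly smaller order in $q^{-1}$, vanishing in the limit'' is not how the argument works: the complement $\mathcal{M}\setminus\mathcal{M}^0$ has $p$-adic measure exactly zero as a closed subscheme of positive codimension (Proposition \ref{prop: properties of p-adic integration}(a)), so the restriction to $\mathcal{A}_P^0$ loses nothing at each finite level.
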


When $X$ is a smooth projective curve over a finite field $k$, assuming the set of parabolic weights $\boldsymbol\alpha$ is generic for $d$, we also have the following result concerning point-counts

\begin{theorem}[cf. Theorem \ref{thm: main theorem independence} (b)] \label{thm intro: independence in char p}
Assume $char(k)>n$. Then the point-count $\#\mathcal{M}_{\on{GL}_n}^{d, \boldsymbol\alpha}(k)$ is
independent of d and $\boldsymbol\alpha$.
\end{theorem}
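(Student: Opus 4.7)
My plan is to adapt the $p$-adic integration method of Groechenig--Wyss--Ziegler~\cite{MWZ} to compare the spaces $\mathcal{M}^{d,\boldsymbol\alpha}_{\on{GL}_n}$ for varying $(d,\boldsymbol\alpha)$, rather than to compare the $\on{SL}_n$- and $\on{PGL}_n$-moduli. Let $\mathcal{O}$ be a complete discrete valuation ring with residue field $k$, and spread $\mathcal{M}^{d,\boldsymbol\alpha}_{\on{GL}_n}$, the parabolic Hitchin base $\mathcal{A}_P$, and the Hitchin map $h$ to smooth $\mathcal{O}$-models. Choosing a nowhere-vanishing top form $\omega$ on the smooth $\mathcal{O}$-model (for instance a top power of the canonical holomorphic symplectic form), Weil's formula yields
\[
  \#\mathcal{M}^{d,\boldsymbol\alpha}_{\on{GL}_n}(k)
   \;=\; q^{\,N}\!\int_{\mathcal{M}^{d,\boldsymbol\alpha}_{\on{GL}_n}(\mathcal{O})}|\omega|,
   \qquad N=\dim\mathcal{M}^{d,\boldsymbol\alpha}_{\on{GL}_n},
\]
and the problem reduces to showing that the $p$-adic integral on the right is independent of $(d,\boldsymbol\alpha)$.

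I would then fiber this integral along $h$. Since $\mathcal{A}_P\setminus\mathcal{A}_P^0$ is a proper closed subscheme of the irreducible affine space $\mathcal{A}_P$, its $\mathcal{O}$-points have measure zero in $\mathcal{A}_P(\mathcal{O})$; by properness of the Hitchin map, the preimage of this set in $\mathcal{M}^{d,\boldsymbol\alpha}_{\on{GL}_n}$ is again a proper closed subscheme, hence of lower dimension, so its $\mathcal{O}$-points also have measure zero. Only $a\in\mathcal{A}_P^0(\mathcal{O})$ therefore contributes. A $p$-adic Fubini decomposition of $|\omega|$ as $f(a)\,|\omega_a|\cdot h^{*}|\omega_{\mathcal{A}}|$ on the smooth locus of $h$ yields
\[
  \int_{\mathcal{M}^{d,\boldsymbol\alpha}_{\on{GL}_n}(\mathcal{O})}|\omega|
   \;=\;\int_{\mathcal{A}_P^0(\mathcal{O})} f(a)\!\left(\int_{h^{-1}(a)(\mathcal{O})}|\omega_a|\right)|\omega_{\mathcal{A}}|(a).
\]
By Theorem~\ref{theorem: spectral data GLn}, for $a\in\mathcal{A}_P^0(\mathcal{O})$ the fibre $h^{-1}(a)$ is a torsor for the smooth relative Jacobian $\on{Pic}^0(\Sigma_a)$, where the smooth projective curve $\Sigma_a$ is determined by $a$ and the parabolic type and is independent of $(d,\boldsymbol\alpha)$. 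Applying Weil's formula fibrewise and Lang's theorem to the smooth connected group $\on{Pic}^0(\Sigma_a)_k$, every $\on{Pic}^d(\Sigma_a)_k$-torsor has exactly $|\on{Pic}^0(\Sigma_a)(k)|$ $k$-points, so the fibre $\mathcal{O}$-volume is independent of $d$ and of $\boldsymbol\alpha$.

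The main obstacle will be arranging the gauge forms so that the density $f(a)$ and the base measure $|\omega_{\mathcal{A}}|$ themselves do not depend on $(d,\boldsymbol\alpha)$. The natural way to do this is to work with the canonical holomorphic symplectic form on $\mathcal{M}^{d,\boldsymbol\alpha}_{\on{GL}_n}$: because the Hitchin fibres are Lagrangian, the symplectic pairing identifies the cotangent directions to the base with the translation-invariant tangent directions along the fibres, producing a canonical top form on the base and a translation-invariant top form on each fibre. These canonical choices are determined by $\mathcal{A}_P$ and the spectral family $\Sigma/\mathcal{A}_P^0$, both of which depend only on the discrete parabolic data, so $|\omega_{\mathcal{A}}|$ and $f(a)$ are independent of $(d,\boldsymbol\alpha)$. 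Combined with the $(d,\boldsymbol\alpha)$-independence of the fibre volume, this shows that the whole Fubini integrand, and hence the point count, does not depend on $d$ or $\boldsymbol\alpha$.
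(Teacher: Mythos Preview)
Your overall architecture --- Weil's formula, Fubini along the Hitchin map, and comparison of fibrewise volumes --- is exactly the paper's. But two of your key steps have genuine gaps.

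The first is the Lang's theorem step. You correctly note that $(\mathcal{A}_P\setminus\mathcal{A}_P^0)(\mathcal{O})$ has measure zero, but its complement in $\mathcal{A}_P(\mathcal{O})$ is \emph{not} $\mathcal{A}_P^0(\mathcal{O})$: it is the strictly larger set $\mathcal{A}_P(\mathcal{O})^\flat=\mathcal{A}_P(\mathcal{O})\cap\mathcal{A}_P^0(F)$, consisting of $\mathcal{O}$-points whose generic fibre lies in $\mathcal{A}_P^0$ but whose special fibre need not. The difference has positive measure. For such $a$ the curve $\Sigma_{a_F}$ is smooth over $F$ but does not extend smoothly over $\mathcal{O}$, so there is no Lang--Hensel argument available, and the $\on{Pic}^0(\Sigma_{a_F})$-torsor $h^{-1}(a_F)\cong\on{Pic}^{d-d_{\boldsymbol m}}(\Sigma_{a_F})$ over the local field $F$ can genuinely fail to have $F$-points. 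The fibre integral is then zero, and what one must actually prove is that non-emptiness of $\on{Pic}^{d-d_{\boldsymbol m}}(\Sigma_{a_F})(F)$ is independent of $d$ among those degrees admitting generic weights. The paper establishes this by analysing the local branches of the spectral curve above the marked point $q$ to exhibit $F$-rational divisors on $\Sigma_{a_F}$ of degrees $b_1,\dots,b_t$ (the multiplicities in the conjugate partition), and then an arithmetic argument using Lemma~\ref{lemma: criterion for generic weights}; this is the substantive content of the proof and is not bypassed by Lang's theorem.

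The second gap concerns the gauge form. The paper allows $M$ with $\deg M>2g-2$, in which case $\mathcal{M}^{d,\boldsymbol\alpha}_{\on{GL}_n}$ is not symplectic and there is no ``canonical holomorphic symplectic form'' to invoke. Even when $M=K$, your assertion that the induced translation-invariant top form on $\mathcal{P}=\on{Pic}^0(\Sigma/\mathcal{A}_P^0)$ is the same for every $d$ is precisely what must be proved: the gauge forms live on different varieties $\mathcal{M}^{d_1}$ and $\mathcal{M}^{d_2}$, and matching them requires an actual comparison. The paper does this by constructing $\omega_d$ via level structures and cotangent bundles (Proposition~\ref{prop: trializing section exists}) and then proving $d$-independence of the associated relative form $\tilde\omega'_d$ through Hecke modifications and the tautological-one-form argument of Bezrukavnikov--Braverman (Proposition~\ref{prop: comparision between degrees}, Corollary~\ref{cor: gauge form GLn}).
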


Our proof of Theorem \ref{thm intro: parabolic mirror symmetry} and Theorem \ref{thm intro: independence in char p} is based on the same strategy as in \cite{MWZ}, but some new ingredients come into play. The arguments in \cite{MWZ} rely heavily on the existence of the so-called regular locus $\mathcal{M}^{reg}\subset \mathcal{M}$\footnote{Here $\mathcal{M}$ stands for the moduli space of $\on{GL}_n$-, $\on{SL}_n$- or $\on{PGL}_n$-Higgs bundles in the non-parabolic setting.} of the moduli space, which is characterized by the Higgs field being regular everywhere on the curve $X$. This open subset $\mathcal{M}^{reg}$ forms a torsor for a certain commutative group scheme over the entire Hitchin base, and the complement satisfies $\on{codim}(\mathcal{M}\backslash\mathcal{M}^{reg})\geq 2$. This locus $\mathcal{M}^{reg}$ serves multiple purposes in \cite{MWZ}: (a) it is used to construct the gauge form $\omega$ on $\mathcal{M}$ that they do $p$-adic integration with. This gauge form $\omega$ automatically satisfies the property that the $p$-adic volume of a general Hitchin fiber is independent of the degree of the underlying vector bundle (as long as the Hitchin fiber is non-empty). (b) when working over a local field $F$ with ring of integers $\mathcal{O}_F$, the regular locus is used to show that for both the $\on{GL}_n$- and the $\on{PGL}_n$-moduli space, the Hitchin fiber admits an $\mathcal{O}_F$-rational point over any $\mathcal{O}_F$-point of the Hitchin base, and this $\mathcal{O}_F$-rational point can be chosen such that the underlying vector bundle is of any degree. This statement plays an important role in the proof of both the topological mirror symmetry and the independence on degree for $E$-polynomials and point-counts in the non-parabolic setting. 

Unfortunately, we don't have this regular locus in the parabolic setting unless all parabolic subgroups involved are Borel, and even if all parabolic subgroups are Borel, we don't have $\on{codim}(\mathcal{M}\backslash\mathcal{M}^{reg})\geq 2$. 
In Section \ref{Section: Volume forms}, we construct a gauge form $\omega$ on $\mathcal{M}$\footnote{Here $\mathcal{M}$ stands for $\mathcal{M}_{\on{GL}_n}^{d, \boldsymbol\alpha}$, $\mathcal{M}_{\on{SL}_n}^{d, \boldsymbol\alpha}$ or $\mathcal{M}_{\on{PGL}_n}^{d, \boldsymbol\alpha}$.} using a different strategy: we relate the moduli space $\mathcal{M}$ with the cotangent bundle $T^{*}\mathcal{N}$ of the moduli space $\mathcal{N}$ of vector bundles with certain types of parabolic and level structures. We obtain the desired gauge form by manipulating the symplectic form on $T^{*}\mathcal{N}$. 

The relation between stringy $E$-polynomials, stringy point-counts and $p$-adic integration reviewed in Section \ref{section: $E$-polynomials, point-counting and $p$-adic integration} reduces the comparison of stringy $E$-polynomials and point-counts in Theorem \ref{thm intro: parabolic mirror symmetry} and Theorem \ref{thm intro: independence in char p} to the comparison of the $p$-adic integral of $\omega$ on the involved moduli spaces. By throwing away a subset of zero measure, this is further reduced to comparing the $p$-adic volume of the Hitchin fibers over $F$-points of $\mathcal{A}^0_P\subset \mathcal{A}_P$. For Theorem \ref{thm intro: parabolic mirror symmetry} (b) and Theorem \ref{thm intro: independence in char p}, the main result of \cite{BB} plays a crucial role to guarantee that the gauge form $\omega$ behaves well when we change the degree $d$ of the underlying vector bundles, see Subsection \ref{subsection: comparison between degrees}. For Theorem \ref{thm intro: parabolic mirror symmetry} (a), we establish a stacky version of the torsor-gerbe duality in \cite{HT} (Proposition 3.2 and Proposition 3.6), see Theorem \ref{thm: gerbe duality stronger version}. This Theorem \ref{thm: gerbe duality stronger version} together with computations in \cite{MWZ} guarantees the equality of $p$-adic integrals on the $\on{SL}_n$- and $\on{PGL}_n$-side.

\subsection{Structure of the article}
In Section \ref{section: $E$-polynomials, point-counting and $p$-adic integration}, we review the relation between stringy $E$-polynomials, stringy point-counts and $p$-adic integration over certain smooth abelian Deligne-Mumford stacks. In Section \ref{section: p-adic integration on Hitchin systems}, we describe the framework for the application of $p$-adic integration to Hitchin systems. In Section \ref{section: moduli spaces of parabolic Higgs bundles}, we first define the $\on{GL}_n$-, $\on{SL}_n$ and $\on{PGL}_n$- moduli spaces involved in our main theorems. Then we describe the generic Hitchin fiber in all three cases. Then we establish the stacky version of the torsor-gerbe duality in \cite{HT}, see Theorem \ref{thm: gerbe duality stronger version}. In Section \ref{Section: Volume forms}, we first construct a gauge form on the moduli space of parabolic Higgs bundles. Then we show this gauge form behaves well when we change the degree $d$ of the underlying vector bundles. Both Theorem \ref{thm intro: parabolic mirror symmetry} and Theorem \ref{thm intro: independence in char p} are proved in Section \ref{section: proof of mirror symmetry}.  

\subsection{Acknowledgements}
I would like to thank Michael Groechenig for many helpful discussions on this subject, and for his contribution to the proof of Proposition \ref{prop: trializing section exists}. I would like to thank Tamas Hausel, Anton Mellit and Andr\'e Oliveira for helpful conversations on related subjects.

\section{$E$-polynomials, point-counting and $p$-adic integration}\label{section: $E$-polynomials, point-counting and $p$-adic integration}
\subsection{$E$-polynomials and point-counting}
The goal of this subsection is to review the relation between stringy $E$-polynomials and stringy point-counts described in \cite{MWZ} Section 2. We start by recalling the definition of $E$-polynomial of complex quasi-projective varieties. Let $X$ be a complex smooth projective variety. The $E$-polynomial of $X$ is defined to be 
\[
E(X; u,v)=\sum(-1)^{p+q}h^{p,q}(X)u^pv^q,
\]
where $h^{p,q}(X)$ is the $(p,q)$-th Hodge number of $X$. 
This definition extends to all complex quasi-projective varieties by imposing the condition that for any closed subvariety $Z\subset X$, we have
\[
 E(X;u,v)=E(X\backslash Z; u,v)+E(Z; u,v). 
\]
The precise formula of $E(X; u,v)$ in terms of mixed Hodge numbers can be found in \cite{HRV} Definition 2.1.4. For a quotient stack of the form $\mathcal{X}=[Y/\Gamma]$, where $Y$ is a complex quasi-projective variety with the action of a finite group $\Gamma$, the $E$-polynomial of $\mathcal{X}$ is defined to be the $\Gamma$-invariant part of $E(Y;u,v)$. 

Now we assume $\mathcal{X}$ is smooth. Motivated by stringy Hodge numbers considered in \cite{BD}, Hausel and Thaddeus \cite{HT} defined the \emph{stringy $E$-polynomial} of $\mathcal{X}$ to be
\begin{equation}\label{eq: stringy E-polynomial}
   E_{st}(\mathcal{X}; u,v)=\sum_{\gamma\in \Gamma_{\on{conj}}} (\sum_{\mathcal{Z}\in \pi_0([Y^{\gamma}/C(\gamma)])} E(\mathcal{Z};u,v)(uv)^{F(\gamma, \mathcal{Z})}), 
\end{equation}
where the first summation is over the set $\Gamma_{\on{conj}}$ of conjugacy classes in $\Gamma$, the second summation is over connected components of the quotient stack $[Y^{\gamma}/C(\gamma)]$, and $F(\gamma, \mathcal{Z})$ is the so-called fermionic shift, see \cite{MWZ} Definition 2.2. If we shift our base field from $\mathbb{C}$ to a finite field $\mathbb{F}_q$ of order $q$, similar formula as in $(\ref{eq: stringy E-polynomial})$ is used to define the \emph{stringy point-count} of $\mathcal{X}=[Y/\Gamma]$ as
\begin{equation}\label{eq: stringy point count}
   \#_{st}(\mathcal{X})=\sum_{\gamma\in \Gamma_{\on{conj}}} (\sum_{\mathcal{Z}\in \pi_0([Y^{\gamma}/C(\gamma)])} \# \mathcal{Z}(\mathbb{F}_q)\cdot q^{F(\gamma, \mathcal{Z})}).  
\end{equation}
Here  the point-count $\# \mathcal{Z}(\mathbb{F}_q)$ is defined by 
\[
\# \mathcal{Z}(\mathbb{F}_q)=\sum_{z\in \mathcal{Z}(\mathbb{F}_q)_{\on{iso}}}\frac{1}{\on{Aut}_{ \mathcal{Z}(\mathbb{F}_q)}(z)},
\]
where $\mathcal{Z}(\mathbb{F}_q)_{\on{iso}}$ stands for the set of isomorphic classes in $\mathcal{Z}(\mathbb{F}_q)$.

With an eye towards the formulation of topological mirror symmetry, we further consider a smooth quotient stack $\mathcal{X}=[Y/\Gamma]$ together with a $\mu_r$-gerbe $\alpha$ over $\mathcal{X}$. By the discussion in \cite{HT} Section 4, this $\mu_r$-gerbe $\alpha$ leads to a $\mu_r$-torsor over the inertia stack $I\mathcal{X}=\displaystyle\coprod_{\gamma\in \Gamma_{\on{conj}}}[Y^\gamma/C(\gamma)]$, which we denote by $\mathcal{L}$. This $\mu_r$-torsor $\mathcal{L}$ can be used to define a twisted version of the $E$-polynomial in (\ref{eq: stringy E-polynomial}) and the point-count in (\ref{eq: stringy point count}), which leads to the so-called \emph{twisted stringy $E$-polynomial} $E_{st}(\mathcal{X},\alpha; u,v)$ for $\mathcal{X}$ over $\mathbb{C}$ and \emph{twisted stringy point-count} $\#^{\alpha}_{st}(\mathcal{X})$ for $\mathcal{X}$ over $\mathbb{F}_q$. We refer the readers to \cite{MWZ} Definition 2.12 and 2.13 for the precise definition of $E_{st}(\mathcal{X},\alpha; u,v)$ and $\#^{\alpha}_{st}(\mathcal{X})$. 

Now we record the relation between twisted stringy $E$-polynomial and twisted stringy point-count. 
\begin{theorem}[cf. \cite{MWZ} Theorem 2.19]\label{thm: stringy point count vs E-polynomial}
Let $R\subset \mathbb{C}$ be a subalgebra of finite type over $\mathbb{Z}$. Let $Y_1$ and $Y_2$ be two smooth $R$-varieties acted on by two finite abelian groups $\Gamma_1$ and $\Gamma_2$ respectively. For $i=1,2$, let $\mathcal{X}_i=[Y_i/\Gamma_i]$ be the corresponding quotient stack, and let $\alpha_i$ be a $\mu_r$-gerbe on $\mathcal{X}_i$. If for any ring homomorphism $R\longrightarrow \mathbb{F}_q$ from $R$ to a finite field $\mathbb{F}_q$ we have equality $\#_{st}^{\alpha_1}(\mathcal{X}_1\times_{R}\mathbb{F}_q)=\#_{st}^{\alpha_2}(\mathcal{X}_2\times_{R}\mathbb{F}_q)$ of twisted stringy point-counts, then we also have the following equality of twisted stringy $E$-polynomials:
\[
    E_{st}(\mathcal{X}_1\times_{R}\mathbb{C}, \alpha_1; u,v)=E_{st}(\mathcal{X}_2\times_{R}\mathbb{C}, \alpha_2; u,v).
\]
\end{theorem}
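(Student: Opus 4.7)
The proof follows the approach of \cite{MWZ} Section 2, reducing the comparison of stringy $E$-polynomials to a Katz-style polynomial-count argument. The plan is to decompose both sides via the inertia stack, spread out over $R$, and then invoke a twisted version of Katz's theorem to convert the equality of point counts into the equality of $E$-polynomials.

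First I would recall the defining formulas for both invariants. Each expands as a finite sum indexed by pairs $(\gamma, \mathcal{Z})$, where $\gamma$ ranges over conjugacy classes of $\Gamma_i$ and $\mathcal{Z}$ over connected components of the inertia quotient $[Y_i^\gamma/C(\gamma)]$. The $\mu_r$-gerbe $\alpha_i$ restricts to a $\mu_r$-torsor $\mathcal{L}$ on each such $\mathcal{Z}$, and decomposing with respect to the characters $\chi\colon \mu_r \to \mathbb{C}^{\times}$ splits both the twisted $E$-polynomial $E_{st}^{\alpha_i}$ and the twisted point-count $\#^{\alpha_i}_{st}$ into $\chi$-isotypic contributions. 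After possibly enlarging $R$ while keeping it finitely generated over $\mathbb{Z}$, I would spread out so that all fixed loci, their quotient stacks, the torsors $\mathcal{L}$, and the fermionic shifts $F(\gamma, \mathcal{Z})$ are defined over $R$, ensuring compatibility of the decompositions between the $\mathbb{C}$-fibre and the $\mathbb{F}_q$-fibres.

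The core tool is a twisted version of Katz's polynomial-count theorem; in its untwisted form (appendix to \cite{HRV}) it states that for a smooth $R$-variety whose point count at every specialization $R\to \mathbb{F}_q$ equals a universal polynomial $P(q)$, the $E$-polynomial over $\mathbb{C}$ equals $P(uv)$. In our setting one needs the analogue for a smooth $R$-DM stack $\mathcal{Z}$ with a $\mu_r$-torsor $\mathcal{L}$: if the $\chi$-twisted point counts are polynomial in $q$, then the $\chi$-twisted $E$-polynomial equals that polynomial with $q$ replaced by $uv$. This rests on Deligne's purity theorem for smooth proper varieties together with a weight-filtration analysis of the $\chi$-isotypic part of $\ell$-adic cohomology, after pulling back to the $\mu_r$-cover determined by $\mathcal{L}$ and taking the $C(\gamma)$-invariant part.

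Finally, I would assemble these ingredients. The hypothesis that the twisted stringy point counts agree at every specialization $R\to \mathbb{F}_q$ exhibits their difference as the zero polynomial in $q$; combining the $(\gamma, \mathcal{Z}, \chi)$-decomposition with the twisted Katz theorem applied to each piece then translates this vanishing into the vanishing of the corresponding difference of twisted $E$-polynomials in $uv$, giving the claimed equality over $\mathbb{C}$. The principal obstacle I anticipate is upgrading Katz's polynomial-count theorem to the twisted stacky setting in sufficient generality: one must control the $C(\gamma)$-action on $\ell$-adic cohomology, identify the $\chi$-isotypic parts with the cohomology of the torsor cover, and verify that purity and polynomial-count structure survive these reductions uniformly in the specialization, so that the passage from point counts to Hodge numbers goes through component by component.
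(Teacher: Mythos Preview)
The paper does not supply its own proof of this statement: it is quoted verbatim as \cite{MWZ} Theorem 2.19 and used as a black box. Your sketch is therefore being compared not to anything in the present paper but to the argument in \cite{MWZ}, and at that level your outline---inertia-stack decomposition into pieces $(\gamma,\mathcal{Z},\chi)$, spreading out over $R$, and an arithmetic-to-Hodge comparison---is indeed the skeleton of that proof.

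One point deserves sharpening. In your final paragraph you invoke ``the twisted Katz theorem applied to each piece'' to pass from vanishing of the difference of point-counts to vanishing of the difference of $E$-polynomials. Katz's theorem in the form stated in the appendix to \cite{HRV} says: \emph{if} a variety is polynomial-count with polynomial $P$, \emph{then} its $E$-polynomial is $P(uv)$. But nothing in the hypothesis tells you that the individual pieces $\mathcal{Z}$ are polynomial-count, nor that the difference is a polynomial in $q$ a priori; you only know the total sums agree for every specialization. What \cite{MWZ} actually uses is the stronger principle that for a smooth $R$-variety (or the $\chi$-isotypic cohomology of a finite cover) the Frobenius eigenvalues on $\ell$-adic cohomology---hence the weight filtration, hence the $E$-polynomial---are determined by the full collection of point-counts over all residue fields, via Deligne's purity and Chebotarev-type density. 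This lets one conclude directly that two virtual sums with the same point-count function have the same $E$-polynomial, without assuming any individual summand is polynomial-count. Your last paragraph gestures at this (``Deligne's purity theorem together with a weight-filtration analysis'') but then retreats to the weaker Katz formulation; making the stronger comparison principle explicit is exactly the ``principal obstacle'' you flag, and it is the genuine content of the argument.
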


\subsection{Point-counting and $p$-adic integration }\label{subsection: p-adic integration and point counting}
In this subsection we review the theory of $p$-adic integration over certain Deligne-Mumford stacks and its relation to stringy point-counts. The main references are \cite{MWZ}, \cite{GWZ} and \cite{yas}. We fix $F$ to be a non-Archimedean local field with ring of integers $\mathcal{O}_F$ and residue field $k_F$ of characteristic $p$ and order $q$. For any positive integer $n$, $F^n$ equipped with the non-Archimedean absolute value becomes a locally compact topological group, therefore admits a unique Haar measure $\mu$ that satisfies $\mu(\mathcal{O}_F^n)=1$. Now let $X$ be a smooth variety over $F$ with a volume form $\omega \in \Gamma(X, \Omega^{\on{top}}_{X/F})$. We say $\omega$ is a \emph{gauge form} if $\omega$ is a trivialising section of the invertible sheaf $\Omega^{\on{top}}_{X/F}$. For each gauge form $\omega$, we associate with it a measure $\mu_{\omega}$ on the set $X(F)$ of $F$-points on $X$. The measure $\mu_{\omega}$ is defined locally: for any open chart $U\hookrightarrow F^n$ of $X(F)$, assuming $\omega=fdx_1\wedge dx_2\wedge\cdots\wedge dx_n$ on this open chart,  we define
$$\mu_{\omega}(U)=\int_{U}|\omega|=\int_{U}|f|d\mu,$$ 
where the right-hand side stands for the integral of the absolute value of $f$ with respect to the Haar measure $\mu$. 

Now let $X$ be a smooth variety over $\mathcal{O}_F$. In this case, the $p$-adic measure $\mu_{\omega}$ does not depend on the choice of the gauge form $\omega \in \Gamma(X, \Omega^{\on{top}}_{X/\mathcal{O}_F})$ when restricted to the set $X(\mathcal{O}_F)$ of $\mathcal{O}_F$-points on $X$. Therefore $p$-adic measures on local charts of $X$ glue together to form a measure on $X(\mathcal{O}_F)$ which we call $\mu_{can}$. The following theorem of Weil builds the bridge between point-counting and $p$-adic integration. 
\begin{theorem}[cf. \cite{Weil} Theorem 2.2.5]
Let $X$ be a smooth variety over $\mathcal{O}_F$ of dimension $n$. Then we have $\displaystyle\frac{\#X(k_F)}{q^n}=\int_{X(\mathcal{O}_F)}d\mu_{can}$.
\end{theorem}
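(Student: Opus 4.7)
The plan is to compute the integral $\int_{X(\mathcal{O}_F)} d\mu_{can}$ by slicing $X(\mathcal{O}_F)$ along the fibers of the reduction map
\[
\on{red}\colon X(\mathcal{O}_F) \longrightarrow X(k_F),
\]
and showing that every fiber has $p$-adic volume exactly $q^{-n}$. Summing over the (finite) set $X(k_F)$ then yields the claimed equality.

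First, I would verify that $\on{red}$ is surjective. Since $X$ is smooth of relative dimension $n$ over $\mathcal{O}_F$, any $\bar x \in X(k_F)$ lifts to an $\mathcal{O}_F$-point by Hensel's lemma; concretely, around $\bar x$ one can choose an \'etale morphism $\varphi\colon U \to \mathbb{A}^n_{\mathcal{O}_F}$ (local coordinates) on an open neighborhood $U\subset X$ of $\bar x$, and lifts of $\bar x$ correspond to lifts of the $k_F$-point $\varphi(\bar x)\in \mathbb{A}^n(k_F)$, which manifestly exist. Moreover, the same \'etale morphism, together with Hensel's lemma applied over each residual point, identifies the fiber $\on{red}^{-1}(\bar x)$ homeomorphically with $\mathfrak{m}_F^n\subset \mathcal{O}_F^n$, via $y\mapsto \varphi(y)-\varphi(\bar x)$ (for any chosen lift $\bar x \in X(\mathcal{O}_F)$).

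Next, I would compute the $p$-adic volume of each fiber. Locally on $U$, pick a gauge form $\omega\in \Gamma(U,\Omega^{\on{top}}_{U/\mathcal{O}_F})$ and write $\omega = u\cdot \varphi^{*}(dx_1\wedge\cdots\wedge dx_n)$ for some $u\in \mathcal{O}_U^{\times}$; such a $u$ exists because both forms trivialise the invertible sheaf $\Omega^{\on{top}}_{U/\mathcal{O}_F}$ (the \'etale map $\varphi$ induces an isomorphism on top differentials up to a unit). Since $u$ takes values in $\mathcal{O}_F^{\times}$ on $U(\mathcal{O}_F)$, we have $|u|\equiv 1$ there, so on $\on{red}^{-1}(\bar x)$
\[
\int_{\on{red}^{-1}(\bar x)} |\omega| \;=\; \int_{\mathfrak{m}_F^n} |dx_1\wedge\cdots\wedge dx_n| \;=\; \mu(\mathfrak{m}_F)^n \;=\; q^{-n},
\]
using $\mu(\mathcal{O}_F)=1$ and that $\mathcal{O}_F/\mathfrak{m}_F$ has order $q$. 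By the stated independence of $\mu_{can}$ from the choice of gauge form on $\mathcal{O}_F$-points, this equals $\mu_{can}(\on{red}^{-1}(\bar x))$.

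Finally, I would patch: cover $X$ by finitely many open subsets $U_i$ on which gauge forms exist (e.g.\ the \'etale charts used above), and argue that $X(\mathcal{O}_F)=\bigsqcup_{\bar x\in X(k_F)} \on{red}^{-1}(\bar x)$ is a disjoint union of clopen pieces each contained in some $U_i(\mathcal{O}_F)$. Integrating $\mu_{can}$ over this partition gives
\[
\int_{X(\mathcal{O}_F)} d\mu_{can} \;=\; \sum_{\bar x\in X(k_F)} q^{-n} \;=\; \frac{\#X(k_F)}{q^n}.
\]
The only subtle point is the Hensel-type identification $\on{red}^{-1}(\bar x)\cong \mathfrak{m}_F^n$, which relies on smoothness of $X$ over $\mathcal{O}_F$; everything else is a direct translation between the Haar measure on $\mathcal{O}_F^n$ and the canonical $p$-adic measure via an \'etale local coordinate system.
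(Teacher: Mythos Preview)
Your proof sketch is correct and is precisely the classical argument due to Weil: fiber the $\mathcal{O}_F$-points over the $k_F$-points via the reduction map, use smoothness and Hensel's lemma to identify each fiber with $\mathfrak{m}_F^n$, and compute its Haar volume as $q^{-n}$. Note that the paper does not supply its own proof of this statement; it is quoted as a background result with a reference to Weil's \emph{Adeles and algebraic groups}, so there is no alternative argument in the paper to compare against.
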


We record the follow properties of the $p$-adic measure $\mu_{\omega}$ that play a significant role in simplifying the computation of $p$-adic integration for Hitchin systems.

\begin{prop}[\cite{yas} Lemma 4.3 and \cite{MWZ} Proposition 4.1]\label{prop: properties of p-adic integration}
(a) Let $X$ be a smooth $\mathcal{O}_F$-variety with a gauge form $\omega$ on $X_F\coloneqq X\times_{\mathcal{O}_F}{F}$, and let $Y\subset X$ be a closed subscheme of positive codimension. Then $\mu_{\omega}(Y(\mathcal{O}_F))=0.$

(b) Let $h: X\to Y$ be a smooth morphism between smooth $F$-varieties. Let $\omega_X$ (resp. $\omega_Y$) be a gauge form on $X$ (resp. $Y$). Let $\theta\in \Gamma(X, \Omega_{X/Y}^{\on{top}})$ be the unique relative top-degree form that satisfies $\omega_X=h^*\omega_Y\wedge \theta$. For any integrable function $f$ on $X(F)$, we have the following equality
\[
\int_{X(F)} f d\mu_{\omega_X}=\int_{Y(F)\ni y}(\int_{h^{-1}(y)(F)}fd\mu_{\theta_y})d\mu_{\omega_Y},
\]
where $\theta_y\in\Gamma(h^{-1}(y), \Omega_{h^{-1}(y)/F}^{\on{top}})$ stands for the pull-back of $\theta$ to $h^{-1}(y)$.
\end{prop}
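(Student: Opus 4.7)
The plan is to reduce both statements to explicit local coordinate computations via the $p$-adic implicit function theorem, and then invoke the classical Fubini theorem for Haar measures. A convenient feature of the $p$-adic setting is that $F$-analytic manifolds admit covers by disjoint unions of compact open balls, so no genuine partition of unity is needed; gluing local identities to global ones is essentially automatic.

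For part (a), I would first cover the closed subset $Y(\mathcal{O}_F)\subset X(\mathcal{O}_F)$ by finitely many (by quasi-compactness of $X(\mathcal{O}_F)$) compact open subsets on each of which étale coordinates $x_1,\dots,x_n$ identify an analytic neighborhood with a polydisc in $\mathcal{O}_F^n$. On each such chart the measure $\mu_\omega$ has the form $|g|\,d\mu$ for some regular function $g$ and the standard Haar measure $d\mu$. Since $Y\subset X$ has positive codimension, after shrinking charts $Y$ is locally contained in the zero locus $V(f)$ of a nonzero regular function $f$. The statement then reduces to showing that $\{x\in \mathcal{O}_F^n : f(x)=0\}$ has Haar measure zero, which follows by Fubini and induction on $n$ from the one-variable fact that a nonzero element of $\mathcal{O}_F[t]$ has only finitely many zeros in $\mathcal{O}_F$.

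For part (b), the $p$-adic implicit function theorem guarantees that near any point of $X(F)$ the smooth morphism $h$ is $F$-analytically isomorphic to a projection $V\times W\to V$ for polydiscs $V\subset F^m$ and $W\subset F^{n-m}$. Choosing coordinates $y_1,\dots,y_m$ on $V$ and $z_1,\dots,z_{n-m}$ on $W$, I write $\omega_Y = b(y)\,dy_1\wedge\cdots\wedge dy_m$ and $\omega_X = a(y,z)\,dy_1\wedge\cdots\wedge dy_m\wedge dz_1\wedge\cdots\wedge dz_{n-m}$. The defining relation $\omega_X=h^*\omega_Y\wedge\theta$ then forces $\theta=(a/b)\,dz_1\wedge\cdots\wedge dz_{n-m}$, and the classical Fubini theorem for the product Haar measure on $V\times W$ converts $|a|\,dy\,dz=|b|\,dy\cdot(|a|/|b|)\,dz$ into the asserted iterated integral on the chart. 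Passing to a disjoint cover of $X(F)$ by such charts and summing produces the global identity.

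The main obstacle is controlling the measurability and integrability conditions under gluing: one needs to know that for $\mu_{\omega_Y}$-almost every $y\in Y(F)$ the inner integral $\int_{h^{-1}(y)(F)} f\,d\mu_{\theta_y}$ converges, and that the resulting function of $y$ is $\mu_{\omega_Y}$-integrable. Both are consequences of applying the classical measure-theoretic Fubini theorem chart by chart, together with the integrability of $f$ on $X(F)$. A small additional observation is that since $\omega_X$ and $\omega_Y$ are \emph{gauge} forms, the local densities $a$ and $b$ are units in the relevant analytic rings, so the quotient $a/b$ defining $\theta$ is well-behaved and no boundary or degeneration issues arise.
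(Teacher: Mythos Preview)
The paper does not supply a proof of this proposition; it simply records the statement and attributes it to \cite{yas} Lemma~4.3 and \cite{MWZ} Proposition~4.1. Your proposal is a correct outline of the standard argument one finds in those references: reduce to local charts via the $p$-adic implicit function theorem, then for (a) show that the zero locus of a nonzero function has Haar measure zero by induction, and for (b) apply the classical Fubini theorem in product coordinates. There is nothing to compare against in the paper itself, and no gap in your sketch that I can see; the only thing worth polishing is the quasi-compactness claim in (a), since $X(\mathcal{O}_F)$ need not be compact in general---but one can always work on each chart separately and use countable additivity, so this is not an obstruction.
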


The theory of $p$-adic integration for smooth $F$-varieties is generalized to certain Deligne-Mumford stacks in \cite{yas}. Following \cite{MWZ}, we restrict ourselves to the following situation:
\begin{definition}\label{def: admissible stack}
        A finite abelian quotient stack $\mathcal{M}$ over $\mathcal{O}_F$ is called \emph{admissible} if it admits a presentation $\mathcal{M}\cong [Y/\Gamma]$, where $Y$ is smooth quasi-projective $\mathcal{O}_F$-variety with a generically free action by a finite abelian group $\Gamma$, such that the order $|
\Gamma|$ of $\Gamma$ is coprime to $p$ and $\mathcal{O}_F$ contains all the $|\Gamma|$-th roots of unity. 
\end{definition}
Let $U\subset Y$ be the locus where the action of $\Gamma$ is free. Let $M$ be the geometric quotient of $Y$
by $\Gamma$ and let $\text{pr}: Y\longrightarrow M$ be the quotient map. In this setting, one can define a measure $\mu_{orb}$ on $M(\mathcal{O}_F)^{\sharp}\coloneqq M(\mathcal{O}_F)\cap \text{pr}(U)(F)$ which is called the orbifold measure. We refer the readers to \cite{MWZ} and \cite{yas} for the precise definition of this measure $\mu_{orb}$. For our applications to Higgs bundles, we will only need the following simple description of $\mu_{orb}$ in a special setting. 

\begin{lemma}[cf. \cite{MWZ} Remark 4.13]\label{lemma: orbifold measure special description}
We assume $\Omega^{\on{top}}_Y$ admits a $\Gamma$-invariant trivializing section $\omega$. Then $\omega$ descends to a section $\omega_{orb}$ of $\Omega^{\on{top}}_{\on{pr}(U)}$, and the orbifold measure $\mu_{orb}$ is given by integrating $|\omega_{orb}|$ on $M(\mathcal{O}_F)^{\sharp}\subset \on{pr}(U)(F)$. 
\end{lemma}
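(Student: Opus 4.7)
The plan is to establish the two assertions in sequence.

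First, for the descent of $\omega$: by definition $U \subset Y$ is the open locus on which $\Gamma$ acts freely, so the restricted quotient map $\on{pr}|_U : U \to \on{pr}(U)$ is a finite étale Galois cover with group $\Gamma$. Étaleness gives a canonical isomorphism $\on{pr}|_U^* \Omega^{\on{top}}_{\on{pr}(U)} \xrightarrow{\sim} \Omega^{\on{top}}_U$, and Galois descent along the $\Gamma$-torsor $\on{pr}|_U$ identifies $\Omega^{\on{top}}_{\on{pr}(U)}$ with the $\Gamma$-invariants of $\Omega^{\on{top}}_U$. Hence the $\Gamma$-invariant section $\omega|_U$ descends uniquely to $\omega_{orb} \in \Gamma(\on{pr}(U), \Omega^{\on{top}}_{\on{pr}(U)})$. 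Since $\omega$ trivializes $\Omega^{\on{top}}_Y$ and $\on{pr}|_U$ is étale, $\omega_{orb}$ trivializes $\Omega^{\on{top}}_{\on{pr}(U)}$.

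Second, to show $\mu_{orb} = \mu_{\omega_{orb}}$ on $M(\mathcal{O}_F)^\sharp$: I would unwind the definition of $\mu_{orb}$ from \cite{yas} and \cite{MWZ}. In general the orbifold measure on a finite abelian quotient $[Y/\Gamma]$ is assembled from twisted-sector contributions indexed by conjugacy classes of $\Gamma$, each weighted by stabilizer-order and age factors. However, every $\mathcal{O}_F$-point of $M(\mathcal{O}_F)^\sharp$ lies in $\on{pr}(U)(F)$ and so its preimage in $Y$ consists of points with trivial stabilizer; only the untwisted sector (the identity conjugacy class) can then contribute. The untwisted contribution, by construction, equals $|\Gamma|^{-1}$ times the pushforward $(\on{pr}|_U)_* |\omega|$ of the canonical $p$-adic measure associated to any $\Gamma$-invariant gauge form $\omega$. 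The equality $\omega|_U = \on{pr}|_U^* \omega_{orb}$ together with the change-of-variables formula for the étale degree-$|\Gamma|$ cover $\on{pr}|_U$ yields $(\on{pr}|_U)_* |\omega| = |\Gamma| \cdot |\omega_{orb}|$ on $\on{pr}(U)(F)$, and the two factors of $|\Gamma|$ cancel, giving $\mu_{orb} = |\omega_{orb}|$ on $M(\mathcal{O}_F)^\sharp$.

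The main obstacle is mostly bookkeeping: one must carefully match the normalization conventions in Yasuda's definition so that the $|\Gamma|^{-1}$ weight built into the orbifold measure precisely compensates the pushforward factor from the étale quotient, and one must verify that no twisted sector can have support intersecting $M(\mathcal{O}_F)^\sharp$. Both are routine once one observes that the free-locus hypothesis trivially forces all stabilizers to be the identity and that the only ingredient from the theory beyond the free quotient is the étale descent of top-degree forms along $\on{pr}|_U$.
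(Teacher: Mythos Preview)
The paper does not supply a proof of this lemma; it is stated with the citation ``cf.\ \cite{MWZ} Remark 4.13'' and the reader is referred to \cite{MWZ} and \cite{yas} for the definition of $\mu_{orb}$. So there is no in-paper argument to compare against, and your write-up is effectively filling in what the paper outsources.

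Your first paragraph (descent of $\omega$ along the \'etale $\Gamma$-torsor $\on{pr}|_U$) is correct and is exactly the standard argument.

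Your second paragraph reaches the right conclusion but the framing is slightly off. The orbifold measure of \cite{yas}/\cite{MWZ} on $M(\mathcal{O}_F)^\sharp$ is not literally assembled as a sum over twisted sectors; that decomposition belongs to the stringy point-count formula (\ref{eq: stringy point count}). Rather, $\mu_{orb}$ is defined via a weight function on $\mathcal{O}_F$-arcs, measuring the discrepancy between the pulled-back top form and a chosen form on $Y$ when one extends an $F$-lift in $U$ to an $\mathcal{O}_F$-point of $Y$ (possibly after a tame base change). The relevant observation is not that ``stabilizers are trivial'' for points of $M(\mathcal{O}_F)^\sharp$ --- an $\mathcal{O}_F$-point can have special fiber in the non-free locus --- but that when $\omega$ is $\Gamma$-invariant this weight function is identically $1$, because there is no character of $\Gamma$ twisting the top form. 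Once the weight is trivial, your change-of-variables computation along the degree-$|\Gamma|$ \'etale cover $\on{pr}|_U$ is exactly right and gives $\mu_{orb}=|\omega_{orb}|$. So the fix is to replace the twisted-sector language with the statement that $\Gamma$-invariance of $\omega$ forces the weight function in the definition of $\mu_{orb}$ to be constant equal to $1$; the rest of your argument then goes through verbatim.
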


Now let $\alpha$ be a $\mu_r$-gerbe on an admissible finite abelian quotient stack $\mathcal{M}$. We further assume that $\mathcal{O}_F$ contains all the $r$-th roots of unity and $r$ is coprime to the characteristic of the residue field. Following \cite{MWZ}, we define a function $f_\alpha: M(\mathcal{O}_F)^{\sharp}\longrightarrow \mathbb{C}$ as follows. Let $x\in M(\mathcal{O}_F)^{\sharp}$ with corresponding $x_F\in \on{pr}(U)(F)$. We have the following pull-back diagram 
\bd
\xymatrix{
\on{Spec}(F)\times_{\on{pr}(U)}U \ar[r] \ar[d] & U\ar[d]^{\on{pr}}  \\
\on{Spec}(F) \ar[r]^{x_F} & \on{pr}(U).
}
\ed
Since the $\Gamma$-action on $U$ is free, the vertical arrow on the left-hand side gives a $\Gamma$-torsor over $\on{Spec}(F)$. By the definition of the quotient stack $\mathcal{M}=[Y/\Gamma]$, this pull-back diagram gives a lifting of $x_F\in \on{pr}(U)(F)$ to an $F$-point $\tilde{x}_F\in \mathcal{M}(F)$ of $\mathcal{M}$. Pulling back along $\tilde{x}_F$, we get a $\mu_r$-gerbe $\tilde{x}_F^{*}\alpha$ over $\on{Spec}(F)$, which gives an element in the Brauer group $\on{Br}(F)$. The Brauer group of a non-Archimedean local field $F$ is isomorphic to $\mathbb{Q}/\mathbb{Z}$ via the Hasse invariant $\text{inv}:Br(F)\longrightarrow \mathbb{Q}/\mathbb{Z}$, see \cite{serre}. Now we define $f_\alpha: M(\mathcal{O}_F)^{\sharp}\longrightarrow \mathbb{C}$ by the following formula
\begin{equation}\label{eq: f_gerbe}
f_\alpha(x)=\on{exp}(2\pi i \cdot\text{inv}(\tilde{x}_F^{*}\alpha)).
\end{equation}
The relation between stringy point-count and $p$-adic integration is described in the following theorem. 

\begin{theorem}[cf. \cite{MWZ} Corollary 5.28]\label{thm: stringy point count=p-adic integral}
Let $\mathcal{M}$ be an admissible finite abelian quotient stack over $\mathcal{O}_F$ with a $\mu_r$-gerbe $\alpha$ and the associated function $f_\alpha$ as described above. Then the stringy point-count of the special fiber $\mathcal{M}_{k_F}\coloneqq \mathcal{M}\times_{\mathcal{O}_F}k_F$ can be computed using the following equation
\[
   \frac{ \#^{\alpha}_{st}(\mathcal{M}_{k_F})}{q^{\on{dim}\mathcal{M}}}=\int_{{M(\mathcal{O}_F})^{\sharp}}\bar{f_\alpha}d\mu_{orb},
\]
where $\bar{f_\alpha}$ denotes the complex conjugate of $f_{\alpha}$.
\end{theorem}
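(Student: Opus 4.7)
The plan is to reduce the identity to a combination of Weil's classical point-counting formula, Yasuda's change-of-variables description of the orbifold measure, and an identification of $f_\alpha$ with the monodromy character of the $\mu_r$-torsor $\mathcal{L}$ associated with $\alpha$. Since $\mathcal{M}=[Y/\Gamma]$ is admissible with $\Gamma$ finite abelian and $\mathcal{O}_F$ containing the $|\Gamma|$-th roots of unity, the $\Gamma$-torsor $\on{Spec}(F)\times_{\on{pr}(U)}U$ over $\on{Spec}(F)$ obtained by pulling back $\on{pr}:U\to \on{pr}(U)$ along any $x_F\in \on{pr}(U)(F)$ is tame and hence classified, after fixing a uniformizer and generators of roots of unity, by an element $\gamma\in\Gamma$. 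I would therefore begin by stratifying $M(\mathcal{O}_F)^{\sharp}$ by this element $\gamma$, together with the connected component $\mathcal{Z}\subset [Y^\gamma/C(\gamma)]$ of the inertia stack $I\mathcal{M}$ along which the chosen $\mathcal{O}_F$-lift lands.

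On each $(\gamma,\mathcal{Z})$-stratum, the key step is a local computation. Following \cite{yas}, the orbifold measure $\mu_{orb}$ restricted to this stratum is the pushforward of the canonical measure on an integral model of $\mathcal{Z}$, multiplied by the factor $q^{-F(\gamma,\mathcal{Z})}$ that accounts for the discrepancy between $\Omega^{\on{top}}_Y$ and $\Omega^{\on{top}}_{Y^\gamma}$ along the fixed locus; this factor is exactly the fermionic shift appearing in the definition of $\#^{\alpha}_{st}$. Applying Weil's theorem componentwise to $\mathcal{Z}$ then converts $\int 1\, d\mu_{orb}$ on this stratum into $\#\mathcal{Z}(k_F)\cdot q^{F(\gamma,\mathcal{Z})-\dim\mathcal{M}}$, which matches the corresponding summand of (\ref{eq: stringy point count}) after dividing by $q^{\dim\mathcal{M}}$.

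To incorporate the gerbe twist, I would next show that $f_\alpha$ is locally constant on $M(\mathcal{O}_F)^{\sharp}$ and depends only on the stratum. Unwinding the construction of $\mathcal{L}$ from $\alpha$ in \cite{HT} Section 4, the $\mu_r$-gerbe $\tilde{x}_F^{*}\alpha$ over $\on{Spec}(F)$ is obtained by restricting $\alpha$ to the point of $I\mathcal{M}$ labeled by $(\gamma,\mathcal{Z})$ and then twisting by the tame $\Gamma$-torsor determined by $\gamma$; via the tame local reciprocity $\on{Br}(F)\cong\mathbb{Q}/\mathbb{Z}$, its Hasse invariant computes the evaluation of the monodromy character of $\mathcal{L}$ at the loop class $\gamma$. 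Thus $\bar{f_\alpha}$ restricted to the $(\gamma,\mathcal{Z})$-stratum equals precisely the character weight entering the twisted stringy count. Summing over all pairs $(\gamma,\mathcal{Z})$ yields the asserted equality.

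The principal technical obstacle will be this last identification of $f_\alpha$ with the gerbe monodromy: it requires a careful compatibility between the local reciprocity map and the construction of $\mathcal{L}$, and is exactly the place where the hypotheses that $r$ be coprime to the residue characteristic and that $\mathcal{O}_F$ contain the $r$-th roots of unity come into play. The remaining ingredients — the stratification by torsor class, Yasuda's description of $\mu_{orb}$ in terms of fermionic shifts, and Weil's formula applied component by component — are either standard or established in \cite{yas} and \cite{MWZ}.
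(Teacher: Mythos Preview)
The paper does not supply its own proof of this theorem: it is stated with the tag ``cf.\ \cite{MWZ} Corollary 5.28'' and then used as a black box, so there is no in-paper argument to compare against. Your proposal is therefore not being measured against anything the author wrote; rather, you have sketched the proof that lives in \cite{MWZ} (building on \cite{yas}).

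As a sketch of that argument your outline is broadly on target: the decomposition of $M(\mathcal{O}_F)^\sharp$ by the tame monodromy class $\gamma\in\Gamma$ and the inertia component $\mathcal{Z}$, the appearance of the fermionic shift $q^{-F(\gamma,\mathcal{Z})}$ from the discrepancy computation in Yasuda's description of $\mu_{orb}$, and the application of Weil's formula on each stratum are exactly the ingredients used in \cite{MWZ}. The one place where your sketch is thinner than the actual proof is the identification of $\bar f_\alpha$ with the twist weight in $\#^{\alpha}_{st}$. In \cite{MWZ} this is not just a monodromy-character computation: it passes through their reinterpretation of the twisted stringy count in terms of the transgression of the gerbe to the inertia stack and a compatibility with the Hasse invariant that takes real work to establish (their Sections 4--5). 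You correctly flag this as the principal technical obstacle, but you should be aware that it is more than a routine unwinding; this is where most of the content of the cited corollary lies.
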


\section{$p$-adic integration on Hitchin systems}\label{section: p-adic integration on Hitchin systems}
The goal of this section is to apply the theory of $p$-adic integration discussed in Section \ref{subsection: p-adic integration and point counting} to Hitchin systems. The topological mirror symmetry equality in \cite{MWZ} Theorem 6.11 relies on the fact that for both the $\on{SL}_n$-side and $\on{PGL}_n$-side, there exists a large enough open subset that forms a torsor for certain commutative group scheme over the entire Hitchin base (see Remark \ref{remark: abstract Hitchin systems}). With an eye towards application in the parabolic setting, we proceed our discussion without assuming the existence of this open locus. The main conclusion we record here is Theorem \ref{main thm: p-adic integral for Hitchin}. 
\subsection{Weak Abstract Hitchin systems}\label{subsection: abstract hitchin systems}

\begin{definition}\label{definition: weak abstract Hitchin system}
Let $\mathcal{A}$ be a smooth $R$-variety. Let $\mathcal{M}$ be an admissible finite abelian group stack over $R$ together with a proper map $h: \mathcal{M}\longrightarrow \mathcal{A}$. We call $(\mathcal{M}, \mathcal{A})$ a \emph{weak abstract Hitchin system} if there exists an open dense subset $\mathcal{A}^0\subset \mathcal{A}$ and an abelian $\mathcal{A}^0$-scheme $\mathcal{P}$ such that the restriction $\mathcal{M}^0= \mathcal{M}\times _{\mathcal{A}}\mathcal{A}^0$ of $\mathcal{M}$ to $\mathcal{A}^0$ is a $\mathcal{P}$-torsor and $\on{codim}(\mathcal{M}\backslash\mathcal{M}^0)\geq 1$.
\end{definition}

\begin{remark}\label{remark: abstract Hitchin systems}
In \cite{MWZ} Definition 6.8, an \emph{abstract Hitchin system} is defined to be a weak abstract Hitchin system in Definition \ref{definition: weak abstract Hitchin system} with an open dense substack $\mathcal{M}'\subset \mathcal{M}$ that forms a torsor for a commutative group scheme $\mathcal{Q}$ over $\mathcal{A}$, with the assumption that $\mathcal{Q}\times_{\mathcal{A}}\mathcal{A}^0=\mathcal{P}$ and $\on{codim}(\mathcal{M}\backslash\mathcal{M}')\geq 2$.
\end{remark}

Let $(\mathcal{M}, \mathcal{A})$ be a weak abstract Hitchin system over $R$ such that $r$ is invertible in $R$, and let $\alpha$ be a $\mu_r$-gerbe on $\mathcal{M}$. We denote by $\on{Split}'(\mathcal{M}^0/\mathcal{A}^0, \alpha)$ the principal component of the space of relative
splittings of $\alpha$ on $\mathcal{M}^0$, see \cite{MWZ} Definition 6.4. We briefly recall the definition of $\on{Split}'(\mathcal{M}^0/\mathcal{A}^0, \alpha)$ as follows. We first associate with $\alpha$ its stack of relative splittings $\widetilde{\on{Split}}_{\mu_r}(\mathcal{M}^0/\mathcal{A}^0, \alpha)$: for any map of $R$-schemes $f: S\longrightarrow \mathcal{A}^0$, it classifies splittings of the $\mu_r$-gerbe $f^{*}\alpha$ on $S\times_{\mathcal{A}^0}\mathcal{M}^0$. The stack $\widetilde{\on{Split}}_{\mu_r}(\mathcal{M}^0/\mathcal{A}^0, \alpha)$ is naturally a pseudo torsor for the moduli stack $\widetilde{\on{Tor}}_{\mu_r}(\mathcal{M}^0/\mathcal{A}^0)$ of $\mu_r$-torsors. We denote by ${\on{Split}}_{\mu_r}(\mathcal{M}^0/\mathcal{A}^0, \alpha)$ the $\mu_r$-rigidification of $\widetilde{\on{Split}}_{\mu_r}(\mathcal{M}^0/\mathcal{A}^0, \alpha)$ in the sense of \cite{ACV}, Section 5, which becomes a pseudo torsor for the scheme $\on{Pic}(\mathcal{M}^0/\mathcal{A}^0)[r]$ of $r$-torsion points in the relative Picard scheme $\on{Pic}(\mathcal{M}^0/\mathcal{A}^0)$. Now we define \begin{equation}\label{eq; definition of spitting}
  \on{Split}'(\mathcal{M}^0/\mathcal{A}^0, \alpha)= \on{Split}_{\mu_r}(\mathcal{M}^0/\mathcal{A}^0, \alpha)\times^{\on{Pic}(\mathcal{M}^0/\mathcal{A}^0)[r]} \on{Pic}^{\tau}(\mathcal{M}^0/\mathcal{A}^0),  
\end{equation}
where $\on{Pic}^{\tau}(\mathcal{M}^0/\mathcal{A}^0)$ is the torsion component in $\on{Pic}(\mathcal{M}^0/\mathcal{A}^0)$. Since $\mathcal{M}^0$ is a $\mathcal{P}$-torsor over $\mathcal{A}^0$, we have  $\on{Pic}^{\tau}(\mathcal{M}^0/\mathcal{A}^0)\cong \mathcal{P}^{\vee}$, where $\mathcal{P}^{\vee}$ is the dual abelian scheme of $\mathcal{P}$. Therefore $\on{Split}'(\mathcal{M}^0/\mathcal{A}^0, \alpha)$ is a pseudo $\mathcal{P}^{\vee}$-torsor. 

\begin{definition}\label{def: dual Hitchin fibration}
We consider two weak abstract Hitchin systems $(\mathcal{M}_1, \mathcal{M}_2, \mathcal{A}$) over the same base $\mathcal{A}$, such that $\mathcal{M}_i$ becomes a $\mathcal{P}_i$-torsor when restricted to the same open dense subset $\mathcal{A}^0\subset \mathcal{A}$. Let $\alpha_i$ be a $\mu_r$-gerbe on $\mathcal{M}_i$ for $i=1, 2$, and we assume $r$ is invertible in $R$ and $R$ contains all the $r$-th roots of unity. We call $(\mathcal{M}_1, \mathcal{M}_2, \mathcal{A}, \alpha_1, \alpha_2)$ a \emph{dual pair} of weak abstract Hitchin systems if the following conditions hold.

(a) $\mathcal{P}_1$ and $\mathcal{P}_2$ are dual abelian schemes over $\mathcal{A}^0$, and there is an \'etale isogeny $\phi: \mathcal{P}_1\longrightarrow \mathcal{P}_2$ such that the degree of $\phi$ is invertible in $R$.

(b) there are isomorphisms of $\mathcal{P}_i$-torsors for $i=1, 2$
\[
    \on{Split}'(\mathcal{M}^0_1/\mathcal{A}^0, \alpha_1)\cong \mathcal{M}^0_2,    \quad  \on{Split}'(\mathcal{M}^0_2/\mathcal{A}^0, \alpha_2)\cong \mathcal{M}^0_1.
\]

(c) For any local field $F$, any homomorphism $R\to \mathcal{O}_F$ and any point $a\in \mathcal{A}(\mathcal{O}_F)\cap \mathcal{A}^0(F)$ with corresponding $a_F\in \mathcal{A}^0(F)$, 
if both fibres $(\mathcal{M}^0_{1})_{a_F}$ and $(\mathcal{M}^0_{2})_{a_F}$ have $F$-rational points, then both $\mathbb{G}_m$-gerbes induced from $\alpha_1|_{(\mathcal{M}^0_{1})_{a_F}}$ and $\alpha_2|_{(\mathcal{M}^0_{2})_{a_F}}$ split. 
\end{definition}

Now we consider a weak abstract Hitchin system $(\mathcal{M}, \mathcal{A})$ over $R$ such that when restricted to $\mathcal{A}^0\subseteq \mathcal{A}$, $\mathcal{M}^0$ is a torsor for an abelian $\mathcal{A}^0$-scheme $\mathcal{P}$. Since $\mathcal{P}\longrightarrow \mathcal{A}^0$ is smooth, $h: \mathcal{M}^0\longrightarrow \mathcal{A}^0$ is also smooth, therefore we have the following isomorphism of invertible sheaves
\begin{equation}\label{eq: top form yoga}
    \Omega^{\on{top}}_{\mathcal{M}^0/R}\cong h^{*}\Omega^{\on{top}}_{\mathcal{A}^0/R}\otimes \Omega^{\on{top}}_{\mathcal{M}^0/\mathcal{A}^0}.
\end{equation}
Since $\mathcal{P}$ is an abelian $\mathcal{A}^0$-scheme, there exist translation invariant trivializing sections of $\Omega^{\on{top}}_{\mathcal{P}/\mathcal{A}^0}$ (at least locally on $\mathcal{A}^0$). As observed in \cite{MWZ} Lemma 6.13, for any translation invariant global section $ \omega\in\Gamma(\mathcal{P}, \Omega^{\on{top}}_{\mathcal{P}/\mathcal{A}^0})$, one can associate with it a translation invariant global section of  $\Omega^{\on{top}}_{\mathcal{M}^0/\mathcal{A}^0}$ as follows. We pick some \'etale cover $U\to\mathcal{A}^0$ that trivializes the $\mathcal{P}$-torsor $\mathcal{M}^0$, i.e. there exists an isomorphism $\mathcal{M}^0_U\cong \mathcal{P}_U$ of $\mathcal{P}_U$-torsors. Through this isomorphism we get a section of $\Omega^{\on{top}}_{\mathcal{M}^0_U/U}$ from $\omega$. Since $\omega$ is translation invariant, this section of $\Omega^{\on{top}}_{\mathcal{M}^0_U/U}$ descends to a translation invariant section of $\Omega^{\on{top}}_{\mathcal{M}^0/\mathcal{A}^0}$, and the resulting section does not depend on the choice of the \'etale cover $U\to X$ and the trivialization.

\begin{lemma}\label{lemma: top form yoga}
The map from translation invariant relative volume forms on $\mathcal{P}/\mathcal{A}^0$ to translation invariant relative volume forms on $\mathcal{M}^0/\mathcal{A}^0$ described above induces an isomorphism 
\[
h_*\Omega^{\on{top}}_{\mathcal{P}/\mathcal{A}^0}\cong h_*\Omega^{\on{top}}_{\mathcal{M}^0/\mathcal{A}^0}
\]
of $\mathcal{O}_{\mathcal{A}^0}$-modules. 
\end{lemma}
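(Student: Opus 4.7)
The plan is to verify, first, that $h_{\mathcal{P}*}\Omega^{\on{top}}_{\mathcal{P}/\mathcal{A}^0}$ is a line bundle on $\mathcal{A}^0$, and then that the map described in the statement is a well-defined $\mathcal{O}_{\mathcal{A}^0}$-linear morphism which becomes an isomorphism after passing to an étale cover trivializing the torsor $\mathcal{M}^0\to \mathcal{A}^0$. Concretely, since $\mathcal{P}\to \mathcal{A}^0$ is an abelian scheme with unit section $e$, translation invariance of the relative Lie algebra yields $\Omega^1_{\mathcal{P}/\mathcal{A}^0}\cong h_{\mathcal{P}}^{*}e^{*}\Omega^1_{\mathcal{P}/\mathcal{A}^0}$, hence $\Omega^{\on{top}}_{\mathcal{P}/\mathcal{A}^0}\cong h_{\mathcal{P}}^{*}\mathcal{L}$ for the line bundle $\mathcal{L}\coloneqq e^{*}\Omega^{\on{top}}_{\mathcal{P}/\mathcal{A}^0}$. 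Using the projection formula together with $h_{\mathcal{P}*}\mathcal{O}_{\mathcal{P}}=\mathcal{O}_{\mathcal{A}^0}$ (which holds since $h_{\mathcal{P}}$ is proper with geometrically connected fibers), one obtains $h_{\mathcal{P}*}\Omega^{\on{top}}_{\mathcal{P}/\mathcal{A}^0}\cong \mathcal{L}$; in particular every local section of this sheaf is automatically translation invariant.

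Next, to construct the map globally, I would choose an étale cover $\pi:U\to \mathcal{A}^0$ admitting an isomorphism $\varphi:\mathcal{M}^0_U\xrightarrow{\sim}\mathcal{P}_U$ of $\mathcal{P}_U$-torsors and, for a local section $\omega$ of $h_{\mathcal{P}*}\Omega^{\on{top}}_{\mathcal{P}/\mathcal{A}^0}$, set its image to be $\varphi^{*}\omega$ on $\mathcal{M}^0_U$. Any two trivializations $\varphi, \varphi'$ over $U$ differ by translation by a section $p\in \mathcal{P}(U)$, that is, $\varphi'=t_p\circ \varphi$ where $t_p$ denotes the $\mathcal{P}_U$-action by $p$. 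Because $\omega$ is translation invariant, $t_p^{*}\omega=\omega$, so $\varphi^{*}\omega=\varphi^{*}t_p^{*}\omega=\varphi'^{*}\omega$. This independence of trivialization shows that the locally defined sections agree on the two pullbacks to $U\times_{\mathcal{A}^0}U$, hence descend to a global section of $h_*\Omega^{\on{top}}_{\mathcal{M}^0/\mathcal{A}^0}$; the construction is clearly $\mathcal{O}_{\mathcal{A}^0}$-linear.

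To conclude that the resulting map is an isomorphism, I would again base change to $U$: there $\varphi$ identifies $\mathcal{M}^0_U$ with $\mathcal{P}_U$ and the morphism becomes literally pullback along $\varphi$, which is visibly an isomorphism on translation invariant relative top forms. Being an isomorphism of coherent $\mathcal{O}$-modules is étale local, so the statement follows on $\mathcal{A}^0$; as a by-product, $h_*\Omega^{\on{top}}_{\mathcal{M}^0/\mathcal{A}^0}$ is itself an invertible sheaf isomorphic to $\mathcal{L}$. The only conceptually subtle point, which I expect to be the main obstacle, is the independence of the construction under change of trivialization; once translation invariance is correctly exploited via the identity $\varphi'=t_p\circ \varphi$, the remainder reduces to standard facts about abelian schemes and étale descent for coherent sheaves.
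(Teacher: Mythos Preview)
Your proposal is correct and follows essentially the same approach as the paper: both rely on the key fact that $h_*\mathcal{O}=\mathcal{O}_{\mathcal{A}^0}$ for $\mathcal{P}$ and $\mathcal{M}^0$ (properness with geometrically reduced and connected fibers), which forces every local section of the pushforward of $\Omega^{\on{top}}$ to be translation invariant and makes the transport along any local trivialization of the torsor canonical. The paper's proof records only this key input and leaves the rest implicit, while you spell out the identification $h_{\mathcal{P}*}\Omega^{\on{top}}_{\mathcal{P}/\mathcal{A}^0}\cong e^{*}\Omega^{\on{top}}_{\mathcal{P}/\mathcal{A}^0}$ via the projection formula and the \'etale descent argument explicitly; these are exactly the details the paper is suppressing.
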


\begin{proof}
    The statement follows from the fact that for any Zariski open subset $V\subseteq \mathcal{A}^0$, pulling back along $\mathcal{P}_V\longrightarrow V$ and $\mathcal{M}^0_V\longrightarrow V$ induce isomorphisms 
    \[
        \Gamma(V, \mathcal{O}_{V})\longrightarrow \Gamma(\mathcal{P}_V, \mathcal{O}_{\mathcal{P}_v}),\quad \Gamma(V, \mathcal{O}_{V})\longrightarrow \Gamma(\mathcal{M}^0_V, \mathcal{O}_{\mathcal{M}^0_V})
    \]
    of global functions. This is because both maps $\mathcal{P}\longrightarrow \mathcal{A}^0$ and $\mathcal{M}^0\longrightarrow \mathcal{A}^0$ are proper with geometrically reduced and geometrically connected fibers.
\end{proof}

For the rest of this paper, for any global section $\tilde{\omega}\in \Gamma(\mathcal{M}^0, \Omega^{\on{top}}_{\mathcal{M}^0/\mathcal{A}^0})$, we will denote by $\tilde{\omega}'$ the corresponding section in $\Gamma(\mathcal{P}, \Omega^{\on{top}}_{\mathcal{P}/\mathcal{A}^0})$, and vice versa. 

If we assume that the invertible sheaf $\Omega^{\on{top}}_{\mathcal{A}^0/R}$ is trivial and fix a trivializing section $\omega_{\mathcal{A}^0}$, it follows from isomorphism (\ref{eq: top form yoga}) and Lemma \ref{lemma: top form yoga} that any trivializing section $\omega\in \Gamma(\mathcal{M}^0, \Omega^{\on{top}}_{\mathcal{M}^0/R})$ can be written uniquely as
\begin{equation}\label{eq: top form fubini}  \omega=h^{*}\omega_{\mathcal{A}^0}\wedge \tilde{\omega},
\end{equation}
where $\tilde{\omega}\in \Gamma(\mathcal{M}^0, \Omega^{\on{top}}_{\mathcal{M}^0/\mathcal{A}^0})$ is a translation invariant trivializing section.

\subsection{$p$-adic integration and mirror symmetry}\label{subsec: $p$-adic integration and mirror symmetry}
In this subsection, we restrict ourselves to the case when $R=\mathcal{O}_F$, where $\mathcal{O}_F$ is the ring of integers of a non-Archimedean local field $F$. Let $(\mathcal{M}, \mathcal{A})$ be a weak abstract Hitchin system over $\mathcal{O}_F$ such that when restricted to the open subset $\mathcal{A}^0\subset \mathcal{A}$, $\mathcal{M}^0$ becomes a torsor for an abelian $\mathcal{A}^0$-scheme $\mathcal{P}$. Since we assume $\mathcal{M}$ is admissible, it admits a presentation $\mathcal{M}\cong [Y/\Gamma]$, where $Y$ is a smooth quasi-projective $\mathcal{O}_F$-variety with a generically free $\Gamma$-action. We denote by $\on{pr}: Y\longrightarrow M=Y/\Gamma$ the quotient map to the coarse moduli space $M$. We denote by $U\subset Y$ the locus where the $\Gamma$-action is free. Note that since $\mathcal{M}^0/\mathcal{A}^0$ is a $\mathcal{P}$-torsor, the open substack $\mathcal{M}^0\subset \mathcal{M}$ is actually a scheme over $\mathcal{O}_F$. It follows that $\mathcal{M}^0$ maps isomorphically to an open subscheme of $\on{pr}(U)$, which we still denote by $\mathcal{M}^0$.

\begin{definition}
        We define $\mathcal{A}(\mathcal{O}_F)^{\flat}= \mathcal{A}(\mathcal{O}_F)\cap \mathcal{A}^0(F)$ and $M(\mathcal{O}_F)^{\flat}= M(\mathcal{O}_F)\cap \mathcal{M}^0(F).$
\end{definition}

\begin{remark}\label{remark: importance of properness}
   (a) Recall that the orbifold measure $\mu_{orb}$ is defined over the $F$-analytic manifold $M(\mathcal{O}_F)^{\sharp}=M(\mathcal{O}_F)\cap \on{pr}(U)(F)$. Note that the complement of $M(\mathcal{O}_F)^{\flat}$ in $M(\mathcal{O}_F)^{\sharp}$ is $(\on{pr}(U)\backslash \mathcal{M}^0)(\mathcal{O}_F)$ which has measure zero. 
   
   (b) Since the morphism $h:\mathcal{M}^0\longrightarrow \mathcal{A}^0$ is smooth, it restricts to a submersion $h_{\flat}: M(\mathcal{O}_F)^{\flat}\longrightarrow \mathcal{A}(\mathcal{O}_F)^{\flat}$ of $F$-analytic manifolds. Since the map $h:M\longrightarrow \mathcal{A}$ is proper, for any $a\in \mathcal{A}(\mathcal{O}_F)^{\flat}$, we have $h_{\flat}^{-1}(a)=h^{-1}(a)(F)$. 
\end{remark}

Now we consider $(\mathcal{M}_1, \mathcal{M}_2, \mathcal{A}, \alpha_1, \alpha_2)$ a dual pair of weak abstract Hitchin systems over $\mathcal{O}_F$. For $i=1, 2$, when restricted to the open subset $\mathcal{A}^0\subset \mathcal{A}$, $\mathcal{M}^0_i$ becomes a torsor for an abelian $\mathcal{A}^0$-scheme $\mathcal{P}_i$, and we have an \'etale isogeny $\phi: \mathcal{P}_1\longrightarrow \mathcal{P}_2$. We denote by $f_{\alpha_i}: M_i(\mathcal{O}_F)^{\sharp}\longrightarrow \mathbb {C}$ the function corresponding to $\alpha_i$ as defined in (\ref{eq: f_gerbe}). We further assume that there is a trivializing section $\omega_{\mathcal{A}^0}$ of the invertible sheaf $\Omega^{\on{top}}_{\mathcal{A}^0/\mathcal{O}_F}$, and that there is a trivializing section $\omega_i$ of $\Omega^{\on{top}}_{\on{pr}(U_i)/\mathcal{O}_F}$ for $i=1, 2$. By the discussion at the end of Subsection \ref{subsection: abstract hitchin systems}, the top-degree form $\omega_i$ can be written uniquely as a wedge product when restricted to $\mathcal{M}_i^0$: 
\[
   \omega_{i}=h_i^{*}\omega_{\mathcal{A}^0}\wedge \tilde{\omega}_i, 
\]
where $\tilde {\omega}_i\in \Gamma(\mathcal{M}_i^0, \Omega^{\on{top}}_{\mathcal{M}_i^0/\mathcal{A}^0})$ is a translation invariant trivializing section. We denote by $\tilde{\omega}'_i$ the corresponding section in $\Gamma(\mathcal{P}_i, \Omega^{\on{top}}_{\mathcal{P}_i/\mathcal{A}^0})$, see Lemma \ref{lemma: top form yoga}. 

\begin{theorem}\label{main thm: p-adic integral for Hitchin}
If we assume $\phi^{*}\tilde{\omega}'_2= \tilde{\omega}'_1$, then we have the following equality of $p$-adic integrals
\[
\int_{M_1(\mathcal{O}_F)^{\sharp}}\bar{f}_{\alpha_1}|\omega_1|=\int_{M_2(\mathcal{O}_F)^{\sharp}}\bar{f}_{\alpha_2}|\omega_2|.
\]
\end{theorem}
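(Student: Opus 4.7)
The plan is to reduce the global $p$-adic integrals to integrals over the Hitchin base $\mathcal{A}(\mathcal{O}_F)^\flat$ via Fubini, and then verify the resulting fiberwise equality using character orthogonality on the compact $F$-points of the abelian Hitchin fibers. As a first step, Remark \ref{remark: importance of properness}(a) allows me to replace $\sharp$ by $\flat$ throughout, since the complement of $M_i(\mathcal{O}_F)^\flat$ in $M_i(\mathcal{O}_F)^\sharp$ has $\mu_{\omega_i}$-measure zero. By Remark \ref{remark: importance of properness}(b), $h_{i,\flat}:M_i(\mathcal{O}_F)^\flat\to \mathcal{A}(\mathcal{O}_F)^\flat$ is a submersion of $F$-analytic manifolds whose analytic fibers over $a$ coincide with $h_i^{-1}(a)(F)$. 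Feeding the decomposition $\omega_i=h_i^{*}\omega_{\mathcal{A}^0}\wedge \tilde\omega_i$ into Proposition \ref{prop: properties of p-adic integration}(b), the claim reduces to the fiberwise identity $I_1(a)=I_2(a)$, where $I_i(a):=\int_{h_i^{-1}(a)(F)}\bar{f}_{\alpha_i}\,|(\tilde\omega_i)_a|$, for every $a\in \mathcal{A}(\mathcal{O}_F)^\flat$.

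Fix such an $a$. Since $h_i$ is proper, $(\mathcal{M}_i^0)_a(F)$ is compact; it is either empty (in which case $I_i(a)=0$) or a principal homogeneous space for the compact abelian group $(\mathcal{P}_i)_a(F)$. In the non-empty case, an $F$-rational basepoint $x_{i,F}$ identifies $(\mathcal{M}_i^0)_a$ with $(\mathcal{P}_i)_a$ and, by translation-invariance of $\tilde\omega'_i$, transports $|(\tilde\omega_i)_a|$ to the Haar measure $|(\tilde\omega'_i)_a|$. Using the Hasse-invariant formula \eqref{eq: f_gerbe} together with the standard fact that translation of a $\mu_r$-gerbe on an abelian variety is controlled by the Picard scheme, one checks that under this identification $\bar{f}_{\alpha_i}$ becomes the constant $\bar{f}_{\alpha_i}(x_{i,F})$ multiplied by a continuous unitary character $\chi_{\alpha_i,a}:(\mathcal{P}_i)_a(F)\to\mathbb{C}^\times$.

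By orthogonality of characters on the compact abelian group $(\mathcal{P}_i)_a(F)$, the integral $I_i(a)$ vanishes unless $\chi_{\alpha_i,a}$ is trivial, in which case $I_i(a)=\bar{f}_{\alpha_i}(x_{i,F})\cdot \on{vol}((\mathcal{P}_i)_a(F);|(\tilde\omega'_i)_a|)$. The key duality input comes from the construction of $\on{Split}'$ in \eqref{eq; definition of spitting}: local duality identifies $\chi_{\alpha_i,a}$ with the class of $\alpha_i|_{(\mathcal{M}_i^0)_a}$ in $\on{Pic}^{\tau}((\mathcal{M}_i^0)_a)\cong (\mathcal{P}_i^\vee)_a$, and its triviality is therefore equivalent to the existence of an $F$-point of $\on{Split}'((\mathcal{M}_i^0)_a,\alpha_i|_a)\cong (\mathcal{M}_{3-i}^0)_a$ supplied by condition (b) of Definition \ref{def: dual Hitchin fibration}. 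Thus $I_1(a)$ and $I_2(a)$ vanish simultaneously; and in the non-vanishing case, condition (c) of the same definition guarantees that both Hasse phases satisfy $\bar{f}_{\alpha_i}(x_{i,F})=1$.

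It remains to show, in the non-vanishing case, the volume identity
\[
\on{vol}\bigl((\mathcal{P}_1)_a(F);|(\tilde\omega'_1)_a|\bigr)=\on{vol}\bigl((\mathcal{P}_2)_a(F);|(\tilde\omega'_2)_a|\bigr),
\]
and this is where the hypothesis $\phi^{*}\tilde\omega'_2=\tilde\omega'_1$ is essential. Pulling back along the étale isogeny $\phi_a:(\mathcal{P}_1)_a\to (\mathcal{P}_2)_a$ of degree prime to $p$, change of variables expresses the ratio of these volumes as $|\ker\phi_a(F)|/|\on{coker}\phi_a(F)|$. I expect this to be the main obstacle: it is verified via local Tate duality applied to the Cartier-dual pair $(\ker\phi_a,\ker\phi_a^\vee)$, where $\phi_a^\vee$ is the dual isogeny (which, because $\mathcal{P}_1$ and $\mathcal{P}_2$ are dual abelian schemes, is again an isogeny between the same $(\mathcal{P}_i)_a$), combined with the local Euler–Poincaré formula for finite étale group schemes of order invertible in $\mathcal{O}_F$. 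Once this is established, integrating the fiberwise equality $I_1(a)=I_2(a)$ against $|\omega_{\mathcal{A}^0}|$ over $\mathcal{A}(\mathcal{O}_F)^\flat$ completes the proof.
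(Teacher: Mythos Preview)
Your argument is correct and follows exactly the paper's reduction: replace $\sharp$ by $\flat$, apply Fubini (Proposition \ref{prop: properties of p-adic integration}(b)) to reduce to a fiberwise identity over $\mathcal{A}(\mathcal{O}_F)^\flat$, and then establish that identity. The paper compresses this last step into citations of \cite{GWZ} Proposition 2.2 (the volume equality under the isogeny $\phi$) and \cite{MWZ} Theorem 6.16 (the Tate-duality character computation yielding simultaneous vanishing and unit Hasse phase), whose content you have essentially unpacked.
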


\begin{proof}
Combining Proposition \ref{prop: properties of p-adic integration} and Remark \ref{remark: importance of properness}, we have 
\[
\int_{M_i(\mathcal{O}_F)^{\sharp}}\bar{f}_{\alpha_i}|\omega_i|=\int_{M_i(\mathcal{O}_F)^{\flat}}\bar{f}_{\alpha_i}|\omega_i|=\int_{\mathcal{A}(\mathcal{O}_F)^{\flat}\ni a}|\omega_{\mathcal{A}^0}|\int_{h_i^{-1}(a)(F)}\bar{f}_{\alpha_i}|(\tilde{\omega}_i)_{a_F}|
\]
for $i=1,2$. Now the theorem follows from \cite {GWZ} Proposition 2.2 and \cite{MWZ} Theorem 6.16.
\end{proof}

\begin{remark}\label{rk: p-adic integral for Hitchin}
Let $\mathcal{M}_i\cong [Y_i/\Gamma_i]$ be a presentation of $\mathcal{M}_i$ as in Definition \ref{def: admissible stack}, and let $\on{pr}: Y_i\longrightarrow M_i=Y_i/\Gamma_i$ be the quotient map. Besides the assumptions for Theorem \ref{main thm: p-adic integral for Hitchin}, if we further assume that for $i=1, 2$, the top-degree form $\on{pr}^{*}\omega_i$ extends to a $\Gamma_i$-invariant trivializing section of $\Omega^{\on{top}}_{Y_i/\mathcal{O}_F}$, then by Lemma \ref{lemma: orbifold measure special description}, the equality in Theorem \ref{main thm: p-adic integral for Hitchin} becomes
\[
    \int_{M_1(\mathcal{O}_F)^{\sharp}}\bar{f}_{\alpha_1}d\mu_{orb}=\int_{M_2(\mathcal{O}_F)^{\sharp}}\bar{f}_{\alpha_2}d\mu_{orb}.
\]
By Theorem \ref{thm: stringy point count=p-adic integral}, this equality implies the following equality of stringy point-counts of the corresponding special fibers
\[
    \#^{\alpha_1}_{st}((\mathcal{M}_1)_{k_F})= \#^{\alpha_2}_{st}((\mathcal{M}_2)_{k_F}).
\]
\end{remark}

\section{Moduli spaces of parabolic Higgs bundles}\label{section: moduli spaces of parabolic Higgs bundles}
\subsection{Moduli space of parabolic $\on{GL}_n$-Higgs bundles}
We fix a Noetherian integral scheme $\mathcal{B}$. We assume $\mathcal{B}$ is of finite type over a universally Japanese ring. The main examples that will be relevant to our applications are when $\mathcal{B}=\on{Spec}(\mathbb{C})$, $\mathcal{B}=\on{Spec}(R)$ where $R\subset \mathbb{C}$ is a finite generated $\mathbb{Z}$-subalgebra, $\mathcal{B}=\on{Spec}(\mathbb{F}_q)$ where $\mathbb{F}_q$ is a finite field and $\mathcal{B}=\on{Spec}(\mathbb{F}_q[[t]])$. Let $X$ be a smooth projective curve over $\mathcal{B}$, i.e. $X\longrightarrow \mathcal{B}$ is smooth projective of relative dimension one with geometrically connected fibers. We assume the genus of $X$ satisfies $g_X\geq 2$. We assume $X$ admits a marked point $q$, i.e. an element $q\in X(\mathcal{B})$. Let $K$ be the relative canonical bundle of $X$ over $\mathcal{B}$. We fix a line bundle $M$ on $X$ such that $M\otimes K^{-1}$ is either trivial or of strictly positive degree. We will consider parabolic Higgs bundles of rank $n$ and multiplicities $\textbf{\textit{m}}=(m_1, m_2,\dots, m_r)$, where $m_i$ are positive integers that sum up to $n$. We recall the following definitions concerning parabolic Higgs bundles. 
\begin{definition}
(a) An \emph{$M$-twisted quasi-parabolic Higgs bundle} is a triple $\textbf{\textit{E}}=(E, \phi, E_q^{\bullet})$, where $E$ is a vector bundle of rank $n$ on $X$, $\phi: E\longrightarrow E\otimes M(q)$ is an $\mathcal{O}_X$-linear map, and $E_q^{\bullet}$ is a partial flag structure of type $\textbf{\textit{m}}$ on the fiber $E_q$ of $E$ at $q\in X(\mathcal{B})$:
\[
    \{0\}=E_q^0\subset E_q^1\subset \cdots \subset E_q^r=E_q
\]
such that $\on{dim}(E_q^i/E_q^{i-1})=m_i$. We further require that when restricted to the fiber $E_q$, the Higgs field $\phi_q$ is nilpotent with respect to the partial flag structure, i.e. $\phi_q(E_q^i)\subset E_q^{i-1}$. 

(b) An \emph{$M$-twisted parabolic Higgs bundle} is an $M$-twisted quasi-parabolic Higgs bundle $\textbf{\textit{E}}$ together with \emph{parabolic weights} $\boldsymbol{\alpha}=(\alpha_0, \alpha_1, \dots, \alpha_r)$, which is a tuple of real numbers that satisfies
\[
    1=\alpha_0>\alpha_{1}>\cdots>\alpha_r\geq0.
\]
We define the \emph{parabolic degree} of an $M$-twisted parabolic Higgs bundle to be 
\[
\on{pdeg}(\textbf{\textit{E}})=\on{deg}(E)+\sum_{i=1}^{r}\alpha_im_i.
\]
\end{definition}

Let $F$ be a subbundle of $E$. The partial flag structure $E_q^\bullet$ on $E_q$ naturally induces a partial flag structure $F_q^{\bullet}$ on $F_q$. The induced parabolic weights for $(F, F_q^{\bullet})$ is defined by $\alpha'_i=\on{max}\{\alpha_j|F_q^i\subseteq E_q^j\}$. If $F$ is further assumed to be $\phi$-invariant, we get a parabolic Higgs subbundle $\textbf{\textit{F}}$ of $\textbf{\textit{E}}$. Similarly, the partial flag structure on $E_q$ induces a partial flag structure on the quotient $(E/F)_q$ with parabolic weights $\alpha''_i=\on{max}\{\alpha_j|(E/F)_q^i=E^j_q/F_q\}$. We have the following equality of parabolic degrees
\[
    \on{pdeg}(\textbf{\textit{E}})=\on{pdeg}(\textbf{\textit{F}})+\on{pdeg}(\textbf{\textit{E}}/\textbf{\textit{F}}).
\]

\begin{definition}
(a) An $M$-twisted parabolic Higgs bundle $\textbf{\textit{E}}$ is \emph{semistable} if for all $\phi$-invariant proper non-zero subbundle $F\subset E$ we have
\[
    \frac{\on{pdeg}(\textbf{\textit{F}})}{\on{rk}(\textbf{\textit{F}})}\leq \frac{\on{pdeg}(\textbf{\textit{E}})}{\on{rk}(\textbf{\textit{E}})}.
\]  
It is called \emph{stable} if the inequality is always strict. 

(b) We denote by $\mathcal{M}_{\on{GL}_n}^{ \boldsymbol{\alpha}}$ the moduli space of semistable $M$-twisted parabolic Higgs bundles with parabolic weights $\boldsymbol{\alpha}$. We denote by $\mathcal{M}_{\on{GL}_n}^{d, \boldsymbol{\alpha}}$ the degree $d$ component of $\mathcal{M}_{\on{GL}_n}^{ \boldsymbol{\alpha}}$. Each $\mathcal{M}_{\on{GL}_n}^{d, \boldsymbol{\alpha}}$ is a quasi-projective variety over $\mathcal{B}$, see \cite{Y},  Corollary 1.6 and Corollary 4.7. 
\end{definition}

 We say $\boldsymbol{\alpha}$ is \emph{generic} for degree $d$ if there exists no parabolic vector bundle with degree $d$ and parabolic weights $\boldsymbol{\alpha}$ that is semistable but not stable. More precisely, we require that the equality
 \begin{equation}\label{eq: generic weights}
     \frac{d'+\sum\alpha_j'm_j'} {n'}=\frac{d+\sum \alpha_i m_i}{n}
 \end{equation}
 doesn't have any solutions, where  $(\boldsymbol{m}', \boldsymbol{\alpha}')$ are the parabolic weights and multiplicities induced from a proper non-zero subbundle $F\subset E$ of rank $n'$ and degree $d'$. For a set of parabolic weights $\boldsymbol{\alpha}$ generic for degree $d$, it is shown in \cite{Y} that the moduli space $\mathcal{M}_{\on{GL}_n}^{d, \boldsymbol{\alpha}}$ is a smooth quasi-projective variety over $\mathcal{B}$. 
 
 The following lemma gives a criterion for the existence of generic parabolic weights for a given degree $d$. This criterion will be used in the proof of Theorem \ref{thm: main theorem independence}. 
 
 \begin{lemma}\label{lemma: criterion for generic weights}
There exists a set of parabolic weights $\boldsymbol{\alpha}$ generic for degree $d$ if and only if $\on{gcd}(m_1, m_2, \dots, m_r,d)=1$. 
 \end{lemma}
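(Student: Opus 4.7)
My plan is to reformulate the genericity of $\boldsymbol\alpha$ as the problem of avoiding a countable collection of affine hyperplanes in $\mathbb{R}^r$, and then to check when that collection is forced to cover all of $\mathbb{R}^r$.

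First I would rewrite the induced-subbundle data numerically. For a subbundle $F \subset E$ of rank $n'$ and degree $d'$, set $m_j'' := \dim\bigl((F_q \cap E_q^j)/(F_q \cap E_q^{j-1})\bigr)$; the natural injection $(F_q \cap E_q^j)/(F_q \cap E_q^{j-1}) \hookrightarrow E_q^j/E_q^{j-1}$ gives $0 \leq m_j'' \leq m_j$, with $\sum_j m_j'' = n'$, and the parabolic degree of $F$ (with the induced parabolic structure) equals $d' + \sum_j \alpha_j m_j''$ — the possibly degenerate induced weights $\alpha_j'$ contribute nothing at levels where $m_j'' = 0$, and coincide with $\alpha_j$ otherwise. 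Equation (\ref{eq: generic weights}) then rewrites as
\[
    \sum_{j=1}^r \bigl(n' m_j - n m_j''\bigr)\,\alpha_j \;=\; nd' - n'd ,
\]
and non-genericity of $\boldsymbol\alpha$ for $d$ is equivalent to the existence of an integer tuple $(n', d', m_1'', \ldots, m_r'')$ satisfying $0 < n' < n$, $0 \leq m_j'' \leq m_j$, $\sum_j m_j'' = n'$, realised by an actual subbundle, that solves this linear equation.

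For the "only if" direction I would argue by contrapositive. If $g := \gcd(m_1, \ldots, m_r, d) \geq 2$, then $n' := n/g$, $d' := d/g$, $m_j'' := m_j/g$ is a valid integer tuple for which every coefficient in the displayed equation vanishes identically, so \emph{every} $\boldsymbol\alpha$ satisfies it. To confirm this tuple is genuinely realised by a subbundle, I would take any rank-$n'$, degree-$d'$ bundle $F$ on $X$, set $E := F^{\oplus g}$, and equip $E_q$ with the flag $E_q^i := (V^i)^{\oplus g}$ induced from any flag $0 \subset V^1 \subset \cdots \subset V^r = F_q$ of type $(m_j'')$; the inclusion of $F$ as the first direct summand then has $F_q \cap E_q^i = V^i$ and realises the required numerical data, so no $\boldsymbol\alpha$ can be generic.

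For the "if" direction, assume $\gcd(m_1, \ldots, m_r, d) = 1$. I would show no hyperplane in the above family is all of $\mathbb{R}^r$: this would force $n' m_j = n m_j''$ for every $j$ and $n d' = n' d$. Writing $n = k\tilde n$, $n' = k\tilde{n'}$ with $k = \gcd(n,n')$ and $\gcd(\tilde n,\tilde{n'}) = 1$, these relations yield $\tilde n \mid m_j$ for every $j$ and $\tilde n \mid d$, so $\tilde n$ divides $\gcd(m_1, \ldots, m_r, d) = 1$; hence $\tilde n = 1$, i.e., $n \mid n'$, contradicting $0 < n' < n$. Therefore every hyperplane in the family is a proper affine subspace of $\mathbb{R}^r$, hence of Lebesgue measure zero. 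Since the combinatorial inputs $(n', (m_j''))$ range over a finite set while $d' \in \mathbb{Z}$, the whole family is countable and its union still has measure zero, while the open simplex $\{1 > \alpha_1 > \cdots > \alpha_r \geq 0\}$ has positive measure. Any weight in the simplex outside this union is generic for $d$. The one delicate point, which I would handle upfront at the reformulation step, is the degenerate case $m_j'' = 0$: the induced weight $\alpha_j'$ may then differ from $\alpha_j$, but its contribution to the parabolic degree is multiplied by zero, so (\ref{eq: generic weights}) really does reduce to the clean linear equation above. Beyond this bookkeeping, the only substantive content is the explicit construction $E = F^{\oplus g}$ in the necessity direction and the elementary divisibility argument in the sufficiency direction.
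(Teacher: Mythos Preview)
Your proof is correct and follows essentially the same approach as the paper's: both reduce the question to whether one of the linear equations $\sum_j (n'm_j - n\,m_j'')\,\alpha_j = nd' - n'd$ is identically zero, and both resolve this via the same divisibility argument showing $n\mid n'$. Your explicit bundle construction $E=F^{\oplus g}$ and the measure-theoretic justification are more detailed than the paper (which argues purely with subspaces $V'\subset V$ of a fixed flagged vector space and the observation that real parameters avoid countably many proper hyperplanes), but the core argument is identical.
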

 \begin{proof}
 Let $V$ be a vector space of dimension $n$ with a partial flag structure of type $\boldsymbol{m}$. For different choices of integer $d'$ and subspace $V'\subset V$ of dimension $n'$, (\ref{eq: generic weights}) imposes countably many restrictions on $\boldsymbol{\alpha}=(\alpha_1, \alpha_2, \dots, \alpha_r)$. Since all $\alpha_i$ take values from real numbers, there exists no generic parabolic weights for degree $d$ if and only if (\ref{eq: generic weights}) is automatically satisfied for some choices of $d'$ and $V'$. In other words, there exists a generic $\boldsymbol{\alpha}$ for degree $d$ if and only if the following family of equations 
 \begin{equation}\label{eq: generic weights existence}
     \frac{d}{n}=\frac{d'}{n'},  \frac{m_i}{n}=\frac{m_i'}{n'}, i=1,2,\dots,r
 \end{equation}
 has no solutions, where $d'$ is an integer, $n'$ is a positive integer smaller than $n$, and $\boldsymbol{m}'=(m'_1, m'_2, \dots m'_s)$ is a partition of $n'$. 
 
 Now we prove the ``if" direction of the lemma. Suppose $\on{gcd}(m_1, m_2, \dots, m_r,d)=1$. If there exists a set of solutions $(d', n', \boldsymbol{m'})$ for (\ref{eq: generic weights existence}), then we have $n|dn'$ and $n|m_in'$ for each $i$. Since $\on{gcd}(m_1, m_2, \dots, m_r,d)=1$, we have $n|n'$, which contradicts our assumption that $n'$ is a positive integer smaller than $n$. 
 
 For the other direction, suppose $\on{gcd}(m_1, m_2, \dots, m_s,d)=c>1$, then $d'=d/c$, $n'=n/c$ and $m_i'=m_i/c$ gives a set of solutions for (\ref{eq: generic weights existence}). 
 \end{proof}

Now we state the first main theorem of the paper, which we prove in Section \ref{section: proof of mirror symmetry}.  
\begin{theorem}\label{thm: main theorem independence}
Let $X$ be a smooth projective curve over a field $k$. Let $\boldsymbol{\alpha}$ be a set of parabolic weights generic for degree $d$. 

(a) Let $k=\mathbb{C}$. Then the $E$-polynomial $E(\mathcal{M}_{\on{GL}_n}^{d,\boldsymbol{\alpha}};u,v)$ is independent of $d$ and $\boldsymbol{\alpha}$.

(b) Let $k=\mathbb{F}_q$ such that $char(k)> n$. Then the point-count $\#\mathcal{M}_{\on{GL}_n}^{d,\boldsymbol{\alpha}}(k)$ is independent of $d$ and $\boldsymbol{\alpha}$. 
\end{theorem}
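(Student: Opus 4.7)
The plan is to reduce part (a) to part (b) via a standard spreading-out argument and then to prove (b) by translating the counting problem into a $p$-adic integral on the parabolic Hitchin fibration. The overall template follows \cite{MWZ}, but the construction of the gauge form and the comparison of fiber volumes across different degrees must replace arguments that in the non-parabolic setting relied on the regular locus $\mathcal{M}^{reg}$, which is not available here.

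For the reduction of (a) to (b), I view the smooth variety $\mathcal{M}_{\on{GL}_n}^{d,\boldsymbol\alpha}$ as the trivial quotient stack $[\mathcal{M}_{\on{GL}_n}^{d,\boldsymbol\alpha}/\{e\}]$ equipped with the trivial $\mu_r$-gerbe. Spreading $X$, $D$, $M$ and the moduli spaces out over a finitely generated $\mathbb{Z}$-subalgebra $R\subset\mathbb{C}$ and applying Theorem \ref{thm: stringy point count vs E-polynomial} to the pairs $(\mathcal{M}_{\on{GL}_n}^{d,\boldsymbol\alpha},0)$ and $(\mathcal{M}_{\on{GL}_n}^{d',\boldsymbol\alpha'},0)$ reduces the equality of $E$-polynomials over $\mathbb{C}$ to the equality of point-counts over every residue field of $R$. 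Part (b) in sufficiently large characteristic thus implies (a).

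For part (b), I spread $\mathcal{M}_{\on{GL}_n}^{d,\boldsymbol\alpha}$ out to a smooth scheme over the ring of integers $\mathcal{O}_F$ of a non-Archimedean local field whose residue field has characteristic larger than $n$. Weil's theorem identifies $\#\mathcal{M}_{\on{GL}_n}^{d,\boldsymbol\alpha}(\mathbb{F}_q)/q^{\dim\mathcal{M}}$ with $\int_{\mathcal{M}(\mathcal{O}_F)}|\omega_d|$ for the gauge form $\omega_d$ constructed in Section \ref{Section: Volume forms}. By Proposition \ref{prop: properties of p-adic integration}(a) the complement of the smooth generic locus $\mathcal{M}^0$, where the fibers of the parabolic Hitchin map $h$ are Jacobians of the spectral curves, has $p$-adic measure zero. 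Writing $\omega_d|_{\mathcal{M}^0}=h^*\omega_{\mathcal{A}_P^0}\wedge\tilde\omega_d$ as in (\ref{eq: top form fubini}), with $\tilde\omega_d$ translation-invariant under the Jacobian action, Proposition \ref{prop: properties of p-adic integration}(b) rewrites each integral as
\[
\int_{\mathcal{A}_P(\mathcal{O}_F)^\flat\ni a}|\omega_{\mathcal{A}_P^0}|\int_{h^{-1}(a)(F)}|\tilde\omega_{d,a}|.
\]
The outer integral is manifestly independent of $d$ and $\boldsymbol\alpha$, so the problem reduces to comparing the inner fiber volumes for two different degrees.

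Each fiber $h^{-1}(a)$ is a torsor under the Jacobian of the associated spectral curve, so by Lemma \ref{lemma: top form yoga} together with the translation invariance of $\tilde\omega_d$ its volume equals that of the Jacobian whenever the fiber admits an $F$-rational point and vanishes otherwise. The key ingredient, furnished by the construction in Section \ref{Section: Volume forms} and the main result of \cite{BB} as detailed in Subsection \ref{subsection: comparison between degrees}, is that under the canonical identification of translation-invariant top forms on the Jacobian the relative forms $\tilde\omega_d'$ and $\tilde\omega_{d'}'$ coincide. The main obstacle, and the point where the parabolic setting genuinely diverges from \cite{MWZ}, is then to exhibit an $\mathcal{O}_F$-rational section over each $a\in\mathcal{A}_P(\mathcal{O}_F)^\flat$ for every prescribed degree, in order to guarantee that both Hitchin fibers are simultaneously nonempty over $F$. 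Without the regular locus, I would construct this section from the spectral identification of $\mathcal{M}^0$ with a relative Picard scheme together with the auxiliary cotangent-bundle description of $\mathcal{M}$ used in the construction of the gauge form, so that a rational point of one degree can be translated to one of any other degree by twisting with a suitable line bundle of prescribed parabolic type.
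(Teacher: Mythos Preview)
Your overall architecture matches the paper: spread out, convert point-counts to $p$-adic integrals via Theorem~\ref{thm: stringy point count=p-adic integral}, apply Fubini (Proposition~\ref{prop: properties of p-adic integration}(b)) to reduce to fiber volumes, and invoke Corollary~\ref{cor: gauge form GLn} to know that the translation-invariant relative forms $\tilde\omega'_d$ on $\mathcal{P}$ coincide across degrees. You also correctly isolate the remaining obstacle: for each $a\in\mathcal{A}_P(\mathcal{O}_F)^\flat$, the two Hitchin fibers over $a_F$ must be simultaneously nonempty over $F$.

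The genuine gap is in your treatment of this last step. You propose to ``translate a rational point of one degree to one of any other degree by twisting with a suitable line bundle of prescribed parabolic type,'' appealing to the spectral and cotangent descriptions. But over the non-algebraically closed local field $F$, the existence of an $F$-rational line bundle on $\Sigma_{a_F}$ of the degree needed to pass from $d_1-d_{\boldsymbol m}$ to $d_2-d_{\boldsymbol m}$ is exactly what is in question, and nothing you cite produces it; the cotangent description in Section~\ref{Section: Volume forms} builds the gauge form and says nothing about $\on{Pic}(\Sigma_{a_F})(F)$. The paper supplies the two missing ingredients. First, Lemma~\ref{lemma: criterion for generic weights} turns the genericity hypothesis into the arithmetic condition $\gcd(d,m_1,\dots,m_r)=1$, reducing the problem to comparing $\on{Pic}^{d-d_{\boldsymbol m}}(\Sigma_{a_F})(F)$ with $\on{Pic}^{1-d_{\boldsymbol m}}(\Sigma_{a_F})(F)$. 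Second, the local structure of the spectral curve above the marked point (Remark~\ref{rk: singular spectral curves}) gives a Galois-stable factorization whose branches, after normalization, furnish $F$-rational effective divisors on $\Sigma_{a_F}$ of degrees $b_1,\dots,b_t$ (the multiplicities in the conjugate partition $\boldsymbol\lambda$); since $\gcd(b_1,\dots,b_t)=m_{\gcd}$, one obtains $\on{Pic}^{m_{\gcd}}(\Sigma_{a_F})(F)\neq\emptyset$. A B\'ezout identity $ad+bm_{\gcd}=1$ together with a divisibility check showing $m_{\gcd}\mid(a-1)d_{\boldsymbol m}$ and $m_{\gcd}\mid(d-1)d_{\boldsymbol m}$ then lets one manufacture $F$-points in the required Picard components. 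Without these ideas the argument does not close.
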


By taking the characteristic polynomial of the Higgs field, we get the parabolic Hitchin map
\[
h: \mathcal{M}_{\on{GL}_n}^{ \boldsymbol{\alpha}}\longrightarrow \mathcal{A}_{\on{GL}_n}, 
\]
where $\mathcal{A}_{\on{GL}_n}=\displaystyle{ \bigoplus_{i=1}^n }\Gamma(X, M(q)^i)$\footnote{More precisely, $\mathcal{A}_{\on{GL}_n}$ is the scheme of sections of $\displaystyle{ \bigoplus_{i=1}^n }\Gamma(X, M(q)^i)$, which is an affine $\mathcal{B}$-space.}. The map $h$ satisfies the following property:

\begin{theorem}[cf. \cite{Y} Corollary 1.6 and Corollary 5.12]\label{thm: Hitchin map proper} The parabolic Hitchin map $h$ is proper. 
\end{theorem}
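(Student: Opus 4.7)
The plan is to verify the valuative criterion of properness for $h$. Since both $\mathcal{M}_{\on{GL}_n}^{\boldsymbol\alpha}$ and $\mathcal{A}_{\on{GL}_n}$ are of finite type over $\mathcal{B}$, it suffices to check that for every DVR $R$ over $\mathcal{B}$ with fraction field $K$ and residue field $\kappa$, any commutative diagram consisting of a morphism $a:\on{Spec}(R)\to \mathcal{A}_{\on{GL}_n}$ together with a lift $\on{Spec}(K)\to \mathcal{M}_{\on{GL}_n}^{\boldsymbol\alpha}$ admits, possibly after a finite extension of $R$, a unique extension $\on{Spec}(R)\to \mathcal{M}_{\on{GL}_n}^{\boldsymbol\alpha}$. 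Concretely the data are a section $a\in \mathcal{A}_{\on{GL}_n}(R)$ together with a semistable parabolic Higgs bundle $\textbf{\textit{E}}_K=(E_K,\phi_K,E_{q,K}^\bullet)$ on $X_K$ with characteristic polynomial $a|_K$, and one must extend $\textbf{\textit{E}}_K$ to a semistable parabolic Higgs bundle on $X_R$ with characteristic polynomial $a$.

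For the extension of the underlying Higgs pair $(E_K,\phi_K)$, the cleanest route is via the spectral construction: the BNR correspondence identifies $(E_K,\phi_K)$ with a pure one-dimensional coherent sheaf on the spectral curve $X_{a,K}\subset \on{Tot}(M(q))_K$, which extends to a pure one-dimensional sheaf on the relative spectral cover $X_{a,R}$ by taking any coherent extension on the projective $R$-scheme $X_{a,R}$ and killing the $R$-torsion. Pushing forward recovers a pair $(\tilde E,\tilde\phi)$ on $X_R$ with characteristic polynomial $a$, and replacing $\tilde E$ by its double dual on the regular surface $X_R$ produces a locally free model. The parabolic flag $E_{q,K}^\bullet$ then extends uniquely to a partial flag on $\tilde E|_{q_R}$ by properness of the flag variety, and the nilpotency of the restricted Higgs field at $q$ propagates from $K$ to $\kappa$ as a closed condition.

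The decisive step is semistable reduction at the parabolic level: the central fibre $\textbf{\textit{E}}_\kappa$ may fail to be parabolically semistable. Following Langton's method adapted by Yokogawa, I would pick a maximal parabolic Higgs subbundle $\textbf{\textit{F}}_\kappa\subset \textbf{\textit{E}}_\kappa$ violating semistability, replace $\tilde E$ by the kernel of the composite $\tilde E\twoheadrightarrow \textbf{\textit{E}}_\kappa\twoheadrightarrow \textbf{\textit{E}}_\kappa/\textbf{\textit{F}}_\kappa$, and equip the result with the inherited Higgs field and parabolic structure at $q$. A monotonicity argument on a suitably chosen parabolic Hilbert polynomial, combined with the boundedness of the family of possible destabilizers supplied by \cite{Y} Corollary 1.6, forces this elementary modification to terminate after finitely many steps with a semistable central fibre. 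Separatedness is then automatic: for generic $\boldsymbol\alpha$ each S-equivalence class contains a single stable isomorphism class, so the limit is unique.

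The principal obstacle is precisely this Langton-type semistable reduction in the parabolic Higgs setting: each modification must strictly decrease the parabolic Hilbert polynomial of the maximal destabilizer while preserving both the partial flag at $q$ and the nilpotency of the restricted Higgs field, and the collection of destabilizers that can appear must be bounded uniformly throughout the procedure. This is exactly the content of \cite{Y} Corollary 5.12, which together with Corollary 1.6 delivers properness.
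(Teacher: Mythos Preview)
The paper gives no proof of its own here: the theorem is stated as a citation of Yokogawa's results \cite{Y}, Corollaries 1.6 and 5.12, and nothing further is written. So there is no argument in the paper to compare against beyond the bare reference.

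Your sketch is a reasonable reconstruction of what lies behind that citation: the valuative criterion, an initial extension via the spectral cover, extension of the flag by properness of the flag variety, and Langton-type elementary modifications to achieve semistable reduction, with boundedness supplied by \cite{Y} Corollary 1.6 and the termination by Corollary 5.12. That is indeed the shape of Yokogawa's argument, and you correctly locate the hard step.

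One point to correct: your separatedness argument is not quite right. You invoke that for generic $\boldsymbol{\alpha}$ every S-equivalence class is a single stable isomorphism class, but the theorem as stated is for arbitrary $\boldsymbol{\alpha}$, not only generic ones. Separatedness of the coarse moduli space holds because Langton's procedure produces a limit whose S-equivalence class is uniquely determined by the generic fibre, and $\mathcal{M}_{\on{GL}_n}^{\boldsymbol{\alpha}}$ parametrizes S-equivalence classes; no genericity hypothesis is needed. A second small caution: the ``double dual on the regular surface $X_R$'' step presumes $R$ is itself regular, which is fine for the valuative criterion over a DVR, but the passage from a pure sheaf on the (possibly singular) spectral cover to a locally free pushforward of the correct rank with compatible flag at $q$ is more delicate than a one-line double-dual and is part of what \cite{Y} actually handles.
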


The (scheme-theoretic) image of $h$ is an affine subspace $\mathcal{A}_{P}\subset \mathcal{A}_{\on{GL}_n}$ which depends on the parabolic multiplicities $\textbf{\textit{m}}$. In order to describe $\mathcal{A}_P$, we introduce the following notations. Let 
\[\boldsymbol{\lambda}=(\lambda_1, \lambda_2,\dots, \lambda_s), \lambda_1\geq \lambda_2, \dots, \geq \lambda_s\] 
be the partition of $n$ conjugate to $\textbf{\textit{m}}$. We define $\lambda_0=0$. Let $\textbf{\textit{t}}=(t_1, t_2,\dots t_n)$ be the ordered $n$-tuple of integers defined by $t_i=j$ if and only if $\displaystyle\sum_{k=0}^{j-1}\lambda_k<i\leq \displaystyle\sum_{k=0}^{j}\lambda_k$, i.e. the first $\lambda_1$ elements are $1$, the next $\lambda_2$ elements are $2$,$\dots$, and the last $\lambda_s$ elements are $s$. It is shown in \cite{BK} that
\begin{equation}\label{eq: Hitchin base}
   \mathcal{A}_P=\displaystyle{ \bigoplus_{i=1}^n }\Gamma(X, M^i((i-t_i)q)). 
\end{equation}

For the rest of this section, we assume the base scheme $\mathcal{B}$ satisfies the property that for any algebraically closed field $\bar{k}$ such that $\mathcal{B}(\bar{k})$ is non-empty, we have $\on{char}(\bar{k})=0$ or $\on{char}(\bar{k})>n$. The generic fiber of the parabolic Hitchin map is described in the following theorem.

\begin{theorem}[cf. \cite{shen} Theorem 2.11.]\label{theorem: spectral data GLn}
There exists an open dense subset $\mathcal{A}_P^0\subset \mathcal{A}_P$ and a flat family of smooth projective
curves $\Sigma\longrightarrow \mathcal{A}_P^0$ such that
\[
    \mathcal{M}_{\on{GL}_n}^{ \boldsymbol{\alpha}}\times_{\mathcal{A}_P}\mathcal{A}_P^0\cong \on{Pic}(\Sigma/\mathcal{A}_P^0),
\]
where $\on{Pic}(\Sigma/\mathcal{A}_P^0)$ is the relative Picard scheme of $\Sigma$ over $\mathcal{A}_P^0$. This family of curves admits a map $p_X: \Sigma\longrightarrow X$ such that the corresponding $\Sigma\longrightarrow X\times \mathcal{A}_P^0$ is finite, and the isomorphism above is induced by pushing-forward along $p_X$. 
\end{theorem}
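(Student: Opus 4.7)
The plan is to construct the family $\Sigma \to \mathcal{A}_P^0$ as an iterated blow-up of the classical spectral curve in $\operatorname{Tot}(M(q))$, and then establish a BNR-type spectral correspondence between line bundles on $\Sigma$ and parabolic Higgs bundles with generic characteristic polynomial. First, for any $a = (a_1, \ldots, a_n) \in \mathcal{A}_P$, form the naive spectral curve $\Sigma_a^{\mathrm{nv}} \subset \operatorname{Tot}(M(q))$ cut out by $y^n - a_1 y^{n-1} + \cdots + (-1)^n a_n = 0$, where $y$ is the tautological section of $\pi^* M(q)$. Away from $q$ this is the familiar setup, and generically in $\mathcal{A}_P$ the curve is smooth over $X \setminus \{q\}$. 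Over $q$, however, the condition in equation \eqref{eq: Hitchin base} that $a_i$ vanishes at $q$ to order $\geq i - t_i$ forces $\Sigma_a^{\mathrm{nv}}$ to meet the zero section at $q$ with total multiplicity $n$, producing a singularity whose combinatorial shape is encoded by the partition $\boldsymbol{\lambda}$ conjugate to $\boldsymbol{m}$.

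Next I would define $\mathcal{A}_P^0 \subset \mathcal{A}_P$ to be the open locus on which, in addition to $\Sigma_a^{\mathrm{nv}}$ being smooth away from $q$, the leading coefficients of the $a_i$ at $q$ satisfy an explicit Newton-polygon non-degeneracy hypothesis corresponding to the weights $\boldsymbol{\lambda}$. Openness is clear, and nonemptiness/density reduces to exhibiting a single Higgs field whose characteristic polynomial is generic in this sense (e.g.\ constructed from a generic nilpotent residue of Jordan type $\boldsymbol{\lambda}$ at $q$). On this locus, the singularity of $\Sigma_a^{\mathrm{nv}}$ at the point over $q$ is formally isomorphic to a fixed model, and a prescribed sequence of $\boldsymbol{\lambda}$-determined blow-ups (whose length and centers depend only on $\boldsymbol{\lambda}$, not on $a$) resolves it. These blow-ups glue in families to give a flat proper family $\Sigma \to \mathcal{A}_P^0$ of smooth projective curves equipped with a finite map $p_X : \Sigma \to X \times \mathcal{A}_P^0$ factoring through the strict transform of $\Sigma^{\mathrm{nv}}$.

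For the correspondence, given a line bundle $\mathcal{L}$ on $\Sigma_a$, I would set $E = (p_X)_* \mathcal{L}$, a rank-$n$ bundle on $X$. Multiplication by $y$ (regarded as a regular function on the total space of $M(q)$, pulled back to $\Sigma_a$) makes $E$ a module over $p_X^* \mathcal{O}_X[y]$, which, together with the relations coming from the characteristic polynomial, yields a Higgs field $\phi : E \to E \otimes M(q)$. The filtration of $(p_X)_*\mathcal{L}$ at $q$ induced by the divisors obtained from successive blow-ups produces a partial flag on $E_q$ of type $\boldsymbol{m}$, with respect to which the residue of $\phi$ at $q$ is nilpotent. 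In the other direction, Cayley--Hamilton endows $E$ with the structure of a sheaf on $\Sigma_a^{\mathrm{nv}}$, and the nilpotency of $\phi_q$ compatible with the given flag is exactly the condition needed to lift this sheaf through each blow-up to a rank-one torsion-free, hence locally free, sheaf on $\Sigma_a$. Keeping track of Riemann--Roch and degree shifts under $p_X$ identifies the result with $\operatorname{Pic}(\Sigma/\mathcal{A}_P^0)$.

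The hard part will be the local analysis over the marked point: making precise the $\boldsymbol{\lambda}$-determined singularity model, exhibiting the explicit blow-up sequence that resolves it, and showing that the flag on $E_q$ recovered from the exceptional divisors matches the one originally given. In particular one must verify that the lifted sheaf is locally free on $\Sigma_a$ (not merely torsion-free) precisely under the nilpotency-plus-flag hypothesis, and that the two functors, push-forward and the Cayley--Hamilton lift, are mutually inverse at the level of families. Once this local picture is in place, globalizing over $\mathcal{A}_P^0$ and passing to relative Picard schemes is formal.
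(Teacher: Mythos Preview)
The paper does not give its own proof of this theorem; it is imported from the reference \cite{shen}. What the present paper does supply is Remark~\ref{rk: singular spectral curves}, which records the local picture: for $a\in\mathcal{A}_P^0$ the spectral curve above $q$ is formally $\on{Spec}\,k[[x,y]]/\prod_{i=1}^s(y^{\lambda_i}-a_ix)$ with $a_i$ units, and $\Sigma_a$ is its \emph{normalization}.

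Your outline is consistent with this description and with the approach of \cite{shen}. For a curve singularity the normalization is computed by iterated blow-ups, so your ``$\boldsymbol{\lambda}$-determined blow-up sequence'' and the paper's ``normalization'' produce the same $\Sigma$. Your identification of the singularity type via the conjugate partition $\boldsymbol{\lambda}$, the Newton-polygon genericity condition cutting out $\mathcal{A}_P^0$, and the BNR-type correspondence via push-forward along $p_X$ all match what the paper uses downstream; compare also the proof of Lemma~\ref{lemma: split trick Higgs bundle}, where a Higgs field with residue of Jordan type $\boldsymbol{\lambda}$ is exhibited explicitly. The ``hard part'' you flag---the local analysis over $q$ showing that the flag read off from the exceptional configuration agrees with the given one, and that the Cayley--Hamilton lift is locally free on $\Sigma_a$ precisely under the nilpotency-plus-flag hypothesis---is indeed the substantive content, and is carried out in \cite{shen} rather than here.
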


\begin{remark}\label{rk: singular spectral curves}
When $\textbf{\textit{m}}=(1,1,\dots,1)$, i.e. we consider complete flags at the marked point, $\mathcal{A}_P^0$ is the locus where the spectral curves are smooth. In all the other cases, the spectral curves are always singular above the marked point $q$. For any geometric point $a\in \mathcal{A}_P^0(\bar{k})$, the spectral curve at $a$ locally looks like 
\[
    \on{Spec}k[[x,y]]/(\prod_{i=1}^s(y^{\lambda_i}-a_ix)), a_i\in k[[x,y]]^{\times}
\]
above $q$, and $\Sigma_a$ is the normalization of this spectral curve. Since $\mathcal{A}_P^0$ lies in the locus where the spectral curves are integral, $\mathcal{M}_{\on{GL}_n}^{ \boldsymbol{\alpha}}\times_{\mathcal{A}_P}\mathcal{A}_P^0$ lies in the stable locus of $\mathcal{M}_{\on{GL}_n}^{ \boldsymbol{\alpha}}$.
\end{remark}

\begin{remark}
The degree of a parabolic Higgs bundle and the degree of its corresponding spectral sheaf on $\Sigma_a$ differ by the degree of the line bundle $\on{det}((p_X)_*\mathcal{O}_{\Sigma_{a}})$. The line bundle $\on{det}((p_X)_*\mathcal{O}_{\Sigma})$ on $X\times \mathcal{A}$ is isomorphic to the pull-back of a line bundle on $X$, which by abuse of notation we still denote by $\on{det}((p_X)_*\mathcal{O}_{\Sigma})$. We denote $d_{\boldsymbol{m}}=\on{deg}(\on{det}((p_X)_*\mathcal{O}_\Sigma))$ which depends on the parabolic multiplicities $\boldsymbol{m}$. It follows from the construction of $\Sigma$ that \[d_{\boldsymbol{m}}=-\displaystyle\frac{1}{2}n(n-1)(\on{deg}M+1)+\displaystyle\sum_{i=1}^{r}\frac{1}{2}m_i(m_i-1).
\]
\end{remark}

\begin{prop}\label{prop: picard injective}
For any geometric point $a\in \mathcal{A}^0_P(\bar{k})$, the pull-back map $p_X^{*}: \on{Pic}(X_{\bar{k}})\longrightarrow \on{Pic}(\Sigma_a)$ is injective. 
\end{prop}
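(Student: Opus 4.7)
The plan is to argue by contradiction: suppose $L\in \on{Pic}(X_{\bar k})$ satisfies $p_X^{*}L\cong \mathcal{O}_{\Sigma_a}$ yet $L\not\cong \mathcal{O}_{X_{\bar k}}$, and derive a contradiction from the Galois structure of the spectral cover. First I would show that $L$ is necessarily an $n$-torsion element of $\on{Pic}^0(X_{\bar k})$. Taking degrees in $p_X^{*}L\cong \mathcal{O}_{\Sigma_a}$ and using that pullback along $p_X$ multiplies degrees by $n$ gives $\deg L = 0$; taking determinants in the projection-formula isomorphism $(p_X)_{*}(p_X^{*}L)\cong L\otimes (p_X)_{*}\mathcal{O}_{\Sigma_a}$ and comparing with $(p_X)_{*}\mathcal{O}_{\Sigma_a}$ yields $L^{\otimes n}\cong \mathcal{O}_{X_{\bar k}}$. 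Hence $L$ lies in the finite group $\on{Pic}^0(X_{\bar k})[n]$.

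Next, assuming $L$ has order $d>1$, I would exploit the fact that $L$ determines a connected étale cyclic cover $\pi_L\colon \tilde X_L=\on{Spec}_{X_{\bar k}}\bigoplus_{i=0}^{d-1} L^{\otimes (-i)}\to X_{\bar k}$ of degree $d$. Its universal property states that a morphism $Y\to X_{\bar k}$ factors through $\pi_L$ exactly when the pullback of $L$ to $Y$ admits a chosen trivialization. The hypothesis therefore produces a finite morphism $g\colon \Sigma_a\to \tilde X_L$ of degree $n/d$ with $p_X=\pi_L\circ g$; equivalently, $K(\tilde X_L)$ is a non-trivial intermediate subfield of $K(\Sigma_a)/K(X_{\bar k})$.

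Third, I would rule out such an intermediate subfield using the Galois closure. Let $G$ be the Galois group of the Galois closure of $K(\Sigma_a)/K(X_{\bar k})$. Since $\Sigma_a$ is irreducible (by our choice of $a\in \mathcal{A}_P^{0}$; see Remark \ref{rk: singular spectral curves}), $G$ embeds transitively in $S_n$ via its action on the $n$ roots of the characteristic polynomial, and $K(\Sigma_a)$ corresponds to a point stabilizer $H\subset G$ of index $n$. Intermediate subfields correspond bijectively to subgroups strictly between $H$ and $G$, so their absence is equivalent to the maximality of $H$ in $G$. I would argue that for $a\in \mathcal{A}_P^{0}$ one has $G=S_n$ and hence $H$ is conjugate to $S_{n-1}$, which is a maximal subgroup of $S_n$ for every $n\ge 2$; this contradicts the existence of $K(\tilde X_L)$.

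The hard step is establishing $G=S_n$ (or, more weakly, the maximality of $H$ in $G$) for \emph{every} $a\in \mathcal{A}_P^{0}$, not only for a generic open subset: classically, a polynomial with sufficiently generic coefficients has Galois group $S_n$, but the proposition demands injectivity at every point of $\mathcal{A}_P^{0}$. Two routes suggest themselves. One is to verify directly from the definition in \cite{shen} that $\mathcal{A}_P^{0}$ avoids the closed locus in which $p_X$ factors through a non-trivial étale cover $\pi_L$ for some $L\in \on{Pic}^{0}(X_{\bar k})[n]\setminus\{\mathcal{O}\}$. A second is to replace the Galois-theoretic step by an algebraic one: apply Krull--Schmidt to $(p_X)_{*}\mathcal{O}_{\Sigma_a}=\mathcal{O}_{X_{\bar k}}\oplus V$, so that the isomorphism $L\otimes \bigl((p_X)_{*}\mathcal{O}_{\Sigma_a}\bigr)\cong (p_X)_{*}\mathcal{O}_{\Sigma_a}$ permutes indecomposable summands, and deduce $L\cong \mathcal{O}_{X_{\bar k}}$ once one knows that $\mathcal{O}_{X_{\bar k}}$ is the unique degree-zero indecomposable summand---e.g.\ if $V$ is semistable of the negative slope implied by $\deg V=-\tfrac12\deg R_{p_X}<0$ (valid because $p_X$ is always ramified in our range of twistings). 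In the Borel case $\boldsymbol{\lambda}=(1,\dots,1)$, local ramification at $q$ provides no obstruction to factorization, so the argument there genuinely requires one of these global inputs.
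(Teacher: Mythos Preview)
Your proposal outlines two possible routes but completes neither, and both face real obstacles. The Galois-theoretic route requires $G=S_n$ (or at least maximality of the point stabilizer) for \emph{every} $a\in\mathcal{A}_P^0$; but $\mathcal{A}_P^0$ is cut out by conditions on the singularity type of the spectral curve (Remark~\ref{rk: singular spectral curves}), not by conditions on the monodromy group, so there is no a priori reason the full symmetric group is attained throughout $\mathcal{A}_P^0$. Your Krull--Schmidt route applied to $(p_X)_*\mathcal{O}_{\Sigma_a}=\mathcal{O}_{X_{\bar k}}\oplus V$ would need $\mathcal{O}_{X_{\bar k}}$ to be the unique degree-zero indecomposable summand, for which you invoke semistability of $V$; this is not established and is again not part of the definition of $\mathcal{A}_P^0$.

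The paper sidesteps both difficulties with a single concrete construction. Rather than analysing $(p_X)_*\mathcal{O}_{\Sigma_a}$, it produces for each $a\in\mathcal{A}_P^0(\bar k)$ a specific line bundle $H\in\on{Pic}(\Sigma_a)$ whose pushforward $(p_X)_*H$ is \emph{already known} to split as $\bigoplus_{i=1}^n L_i$ with the $L_i$ line bundles of pairwise distinct degrees. This is achieved by writing the characteristic polynomial $a$ as the Hitchin image of an explicit companion-matrix Higgs field on $E=\bigoplus_i M^i((i-t_i)q)$; the spectral correspondence (Theorem~\ref{theorem: spectral data GLn}) then furnishes $H$ with $(p_X)_*H\cong E$ (this is Lemma~\ref{lemma: split trick Higgs bundle}). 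If $p_X^*N\cong\mathcal{O}_{\Sigma_a}$, the projection formula gives $N\otimes\bigoplus_i L_i\cong\bigoplus_i L_i$, and since the degrees $\deg L_i$ are distinct, Krull--Schmidt forces $N\cong\mathcal{O}_{X_{\bar k}}$. The key idea you are missing is that by choosing $H$ cleverly one engineers a pushforward whose indecomposable summands are separated by degree, so no further structural input---neither the Galois group of the cover nor semistability of $V$---is required.
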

We would like to apply the same argument as in \cite{BNR} Remark 3.10. For that purpose, we need the following lemma:
\begin{lemma}\label{lemma: split trick Higgs bundle}
For any $a\in \mathcal{A}^0_P(\bar{k})$, there exists a stable parabolic Higgs bundle $\textbf{\textit{E}}$ such that $h(\textbf{\textit{E}})=a$ and the underlying vector bundle $E$ decomposes as a direct sum of line bundles of distinct degree. 
\end{lemma}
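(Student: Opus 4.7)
The plan is to use the spectral correspondence of Theorem \ref{theorem: spectral data GLn}: the Hitchin fibre over $a \in \mathcal{A}^0_P(\bar k)$ identifies with $\on{Pic}(\Sigma_a)$, with the underlying vector bundle of the parabolic Higgs bundle associated to $L \in \on{Pic}(\Sigma_a)$ being $(p_X)_*L$. Since every parabolic Higgs bundle lying in $\mathcal{M}_{\on{GL}_n}^{\boldsymbol{\alpha}} \times_{\mathcal{A}_P} \mathcal{A}^0_P$ is automatically stable by Remark \ref{rk: singular spectral curves}, it suffices to exhibit a single $L \in \on{Pic}(\Sigma_a)$ for which $(p_X)_*L$ decomposes as a direct sum of line bundles of pairwise distinct degrees on $X$.

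First I would pick $n$ distinct points $x_1, \dots, x_n \in X(\bar k)$, disjoint from the marked point $q$ and from the branch locus of $p_X \colon \Sigma_a \to X$; above each such $x_i$ the map $p_X$ is \'etale with $n$ distinct preimages. Choose one preimage $y_i \in p_X^{-1}(x_i)$ for each $i$, fix a reference line bundle $L_0$ on $\Sigma_a$ of large degree, and, for a tuple of non-negative integers $\boldsymbol d = (d_1, \dots, d_n)$, consider
\begin{equation*}
    L(\boldsymbol d) := L_0\Bigl(\sum_{i=1}^n d_i y_i\Bigr) \in \on{Pic}(\Sigma_a),
\end{equation*}
together with its pushforward $E(\boldsymbol d) := (p_X)_* L(\boldsymbol d)$. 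Pushing forward the tautological sequence on $\Sigma_a$ yields
\begin{equation*}
    0 \longrightarrow (p_X)_* L_0 \longrightarrow E(\boldsymbol d) \longrightarrow \bigoplus_{i=1}^n k(x_i)^{\oplus d_i} \longrightarrow 0,
\end{equation*}
exhibiting $E(\boldsymbol d)$ as an iterated elementary upper modification of $(p_X)_*L_0$ concentrated at the points $x_1, \dots, x_n$, directed along the local branch of $p_X$ attached to $y_i$.

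Next I would exploit the local picture: since $p_X$ is \'etale near each $y_i$, in a neighbourhood of $x_i$ the sheaf $(p_X)_*\mathcal{O}_{\Sigma_a}$ is canonically a direct sum of $n$ copies of $\mathcal{O}_X$ indexed by the preimages of $x_i$, and the twist by $d_i y_i$ multiplies only the $y_i$-summand by $\mathcal{O}_X(d_i x_i)$. This gives full control of the local splitting type of $E(\boldsymbol d)$ near each $x_i$, so by choosing the $d_i$ large and well-separated one can force the Harder--Narasimhan polygon of $E(\boldsymbol d)$ to be a strictly decreasing sequence of rank-one slopes whose successive gaps exceed $2g_X - 2$; the resulting vanishing of every $\on{Ext}^1$ between consecutive subquotients then splits the filtration into a direct sum of line bundles of pairwise distinct degrees.

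The main obstacle is this last step: rigorously upgrading the purely local picture into a global splitting with pairwise distinct degrees. A cleaner alternative I would try is to specialise $\Sigma_a$ to a reducible \'etale cover of $X$, where the pushforward of a line bundle is tautologically a direct sum of line bundles of pairwise distinct degrees, and then use upper semicontinuity of the Harder--Narasimhan polygon together with irreducibility of $\on{Pic}$ in a family of spectral curves to propagate the splitting back to the smooth spectral curve of interest. Either route produces the desired line bundle $L$; combined with the spectral correspondence and the automatic stability above, this will complete the proof.
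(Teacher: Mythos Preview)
Your approach differs substantially from the paper's, and the gap you yourself identify is real.

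The paper bypasses the spectral correspondence entirely and writes down an explicit Higgs bundle in companion form: take $E=\bigoplus_{i=1}^{n} L_i$ with $L_i=M^i((i-t_i)q)$ and let $\phi$ be the companion matrix with last column $(-a_n,\dots,-a_1)^t$ and subdiagonal $1$'s. The characteristic polynomial of $\phi$ is $a$ by construction, the degrees $\deg L_i=i\deg M+(i-t_i)$ are pairwise distinct because $\deg M>0$, and at $q$ the Higgs field is a nilpotent in the Richardson orbit for $\boldsymbol m$, so a compatible flag exists. Stability follows since $a\in\mathcal{A}^0_P$. This is a two-line proof with no analysis of Harder--Narasimhan behaviour.

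Your twisting argument, by contrast, leaves the crucial step unproved: knowing the local splitting type of $(p_X)_*L(\boldsymbol d)$ near finitely many unramified points does not determine the global Harder--Narasimhan filtration, and you give no mechanism that forces the maximal destabilising sub-line-bundle to have the degree you want. It is plausible that some such argument works, but it would require real effort, and you do not supply it. Your fallback via specialisation is worse: the point $a$ is \emph{fixed}, so you cannot degenerate $\Sigma_a$; and even in a family, reducible spectral curves lie outside $\mathcal{A}^0_P$, while upper semicontinuity of the Harder--Narasimhan polygon goes in the wrong direction (the polygon can only jump \emph{up} at special points, so a split limit tells you nothing about the generic member). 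If you want to salvage the spectral approach, note that the paper's companion construction is essentially the pushforward of the structure sheaf: one can show directly that $(p_X)_*\mathcal{O}_{\Sigma_a}\cong\bigoplus_i M^{-i}(-(i-t_i)q)$, which already splits with distinct degrees and makes the twisting manoeuvre unnecessary.
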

\begin{proof}
We write $a=(a_1, a_2,\dots, a_n)$, $a_i\in \Gamma(X_{\bar{k}}, M^i((i-t_i)q))$. We consider the following matrix 
\[
  \phi=
    \left(
      \begin{array}{*6{c}}
      0 & 0&\cdots        &  0 &-a_n\\
       1& 0& \cdots & 0 &  -a_{n-1}\\
       0 &    1           &0&0& -a_{n-2}\\
       \vdots&\vdots&\vdots&\vdots&\vdots\\
          0&  0      &0&    1 & -a_1\\
          
      \end{array}
    \right)
\]
acting on $E=(L_n,L_{n-1},\dots,L_1)^t$, where $L_i=M^i((i-t_i)q)$. When restricted to the fiber $E_q$, the Higgs field $\phi_q$ is a matrix with Jordan blocks of type $\boldsymbol{\lambda}$, which lies in the Richardson orbit corresponding to the parabolic multiplicities  $\boldsymbol{m}$. It follows that there exists a (unique) partial flag structure of type $\boldsymbol{m}$ on $E_q$ that is compatible with $\phi_q$.

\end{proof}
\begin{proof}[Proof of Proposition \ref{prop: picard injective}]
It follows from Theorem \ref{theorem: spectral data GLn} that there exists a line bundle $H\in\on{Pic}(\Sigma_a)$ such that $(p_X)_{*}H\cong E=\displaystyle\bigoplus_{i=1}^{n}L_i$ defined in the proof of Lemma \ref{lemma: split trick Higgs bundle}. Let $N$ be a line bundle on $X_{\bar{k}}$ such that $p_X^{*}N\cong \mathcal{O}_{\Sigma_a}$. It follows from projection formula that $(p_X)_{*}(p_X^{*}N\otimes H)\cong N\otimes (p_X)_{*}H$, which implies $N\otimes \displaystyle\bigoplus_{i=1}^{n}L_i\cong \displaystyle\bigoplus_{i=1}^{n}L_i$. Since $L_i$ are line bundles of distinct degree, $N$ must be the trivial line bundle on $X_{\bar{k}}$. 
\end{proof}

\subsection{The $\on{SL}_n$-side and the $\on{PGL}_n$-side}\label{Subsection: sln vs pgln}
In this subsection, we introduce the moduli space of parabolic $\on{SL}_n$- and $\on{PGL}_n$-Higgs bundles. 
\begin{definition}
(a) Let $L$ be a line bundle on $X$. A $M$-twisted parabolic $\on{SL}_n$-Higgs bundle of determinant $L$ is a pair $(\textbf{\textit{E}}, \varphi)$, where $\textbf{\textit{E}}$ is a $M$-twisted parabolic Higgs bundle with trace zero Higgs field, and $\varphi: \on{det}(E)\cong L$ is an isomorphism of line bundles. 

(b) We denote by $\mathcal{M}_{\on{SL}_n}^{L, \boldsymbol{\alpha}}$ (resp. $\mathfrak{M}_{\on{SL}_n}^{L, \boldsymbol{\alpha}})$ the moduli space (resp. moduli stack) of semistable $M$-twisted parabolic $\on{SL}_n$-Higgs bundles of determinant $L$ and parabolic weights $\boldsymbol{\alpha}$.
\end{definition}

\begin{remark}
(a) Let $\boldsymbol{\alpha}$ be generic parabolic weights for degree $\on{deg}(L)$. Since the only automorphisms that a stable parabolic $\on{SL}_n$-Higgs bundle admits is multiplying by the $n$-th roots of unity, the natural map ${\mathfrak{M}}_{\on{SL}_n}^{L, \boldsymbol{\alpha}}\longrightarrow\mathcal{M}_{\on{SL}_n}^{L, \boldsymbol{\alpha}}$ gives the moduli stack ${\mathfrak{M}}_{\on{SL}_n}^{L, \boldsymbol{\alpha}}$ the structure of a $\mu_n$-gerbe over $\mathcal{M}_{\on{SL}_n}^{L, \boldsymbol{\alpha}}$. We denote this $\mu_n$-gerbe by $\check{\alpha}$.

(b) Let $\Gamma =\on{Pic}^0(X)[n]$ be the group scheme of $n$-torsion points in the Picard scheme $\on{Pic}(X)$. The moduli space $\mathcal{M}_{\on{SL}_n}^{L, \boldsymbol{\alpha}}$ admits a natural $\Gamma$-action. For each $\gamma\in \Gamma$, this action is defined by mapping the underlying vector bundle $E$ to $E\otimes \gamma$ and modifying the other data accordingly. 

(c) Consider the case when $X$ is a smooth complex projective curve. Let $L_1$ and $L_2$ be two line bundles on $X$ of the same degree $d$. Tensoring with an $n$-th root of $L_1^{-1}L_2$ induces an isomorphism $\mathcal{M}_{\on{SL}_n}^{L_1, \boldsymbol{\alpha}}\xrightarrow[]{\simeq}\mathcal{M}_{\on{SL}_n}^{L_2, \boldsymbol{\alpha}}$.
\end{remark}

Now we consider the $\on{SL}_n$-version of the parabolic Hitchin map
\[
    \check{h}: \mathcal{M}_{\on{SL}_n}^{L, \boldsymbol{\alpha}}\longrightarrow \mathcal{A},
\]
where $\mathcal{A}\subset \mathcal{A}_P$ is the affine subspace defined by the factor in $\Gamma(X, M(q))$ being zero. Let $\mathcal{A}^0=\mathcal{A}\cap \mathcal{A}_P^0$ and let $\Sigma/\mathcal{A}^0$ be the family of curves that appears in Theorem \ref{theorem: spectral data GLn} with $p_X: \Sigma\longrightarrow X$. In order to describe the generic fiber of $\check{h}$, we consider the norm map
\[
    \on{Nm}_{\Sigma/X\times \mathcal{A}^0}: \on{Pic}(\Sigma/\mathcal{A}^0)\longrightarrow \on{Pic}(X\times \mathcal{A}^0/\mathcal{A}^0)
\]  
defined by 
\[
     \on{Nm}_{\Sigma/X\times \mathcal{A}^0}(M)=\on{det}((p_X)_*M)\otimes \on{det}^{-1}((p_X)_*\mathcal{O}_\Sigma)
\]
for any $M\in\on{Pic}(\Sigma/\mathcal{A})$.

\begin{definition}
Let $\check{\mathcal{P}}=\on{Nm}_{\Sigma/X\times \mathcal{A}^0}^{-1}(\mathcal{O}_X)$. For a line bundle $L$ on $X$, let $\check{\mathcal{P}}_L=\on{Nm}_{\Sigma/X\times \mathcal{A}^0}^{-1}(L)$.
\end{definition}

This commutative group scheme $\check{\mathcal{P}}$ is the so-called \emph{relative Prym scheme} of $\Sigma/X\times \mathcal{A}^0$. We have the following description of the parabolic Hitchin fiber above $\mathcal{A}^0$.
\begin{prop}\label{prop: spectral data sln}
(a) $(\mathcal{M}_{\on{SL}_n}^{L, \boldsymbol{\alpha}})^0\coloneqq\mathcal{M}_{\on{SL}_n}^{L, \boldsymbol{\alpha}}\times_{\mathcal{A}}\mathcal{A}^0$ is isomorphic to $\check{\mathcal{P}}_{L\otimes \on{det}^{-1}((p_X)_*\mathcal{O}_\Sigma)}$,

(b) $\check{\mathcal{P}}$ is an abelian scheme over $\mathcal{A}^0$, and $(\mathcal{M}_{\on{SL}_n}^{L, \boldsymbol{\alpha}})^0$ is a $\check{\mathcal{P}}$-torsor. 
\end{prop}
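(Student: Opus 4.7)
The plan is to deduce both parts from Theorem \ref{theorem: spectral data GLn} by tracing how the additional $\on{SL}_n$-structure cuts a subscheme out of the $\on{GL}_n$ spectral picture.

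For part (a), I would start from the spectral correspondence over $\mathcal{A}_P^0$: a line bundle $M$ on $\Sigma$ corresponds to the parabolic Higgs bundle whose underlying vector bundle is $(p_X)_* M$ (with the induced Higgs field and parabolic structure). The $\on{SL}_n$-moduli space is carved out of $\mathcal{M}_{\on{GL}_n}^{\boldsymbol\alpha}$ by two conditions: the Higgs field has trace zero, and $\det E \cong L$. Vanishing of the trace forces the coefficient $a_1 \in \Gamma(X, M(q))$ in the characteristic polynomial to vanish, which is exactly the condition cutting $\mathcal{A}$ out of $\mathcal{A}_P$; hence restriction to $\mathcal{A}^0 = \mathcal{A} \cap \mathcal{A}_P^0$. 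The determinant condition translates via the defining identity $\det((p_X)_*M) \cong \on{Nm}_{\Sigma/X \times \mathcal{A}^0}(M) \otimes \det((p_X)_*\mathcal{O}_\Sigma)$ into $\on{Nm}(M) \cong L \otimes \det^{-1}((p_X)_*\mathcal{O}_\Sigma)$, which is precisely the condition defining $\check{\mathcal{P}}_{L \otimes \det^{-1}((p_X)_*\mathcal{O}_\Sigma)}$.

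For part (b), I would first observe $\check{\mathcal{P}} \subset \on{Pic}^0(\Sigma/\mathcal{A}^0)$, since a direct computation (or Grothendieck--Riemann--Roch) gives $\deg \on{Nm}(M) = \deg M$, so the norm preserves degrees and $\check{\mathcal{P}}$ lands in the degree-$0$ component; hence $\check{\mathcal{P}}$ is proper. As the scheme-theoretic kernel of a homomorphism of commutative group schemes, it inherits a group scheme structure. Smoothness follows from a fibrewise tangent computation: the differential of $\on{Nm}$ at the identity is the trace map $H^1(\Sigma_a, \mathcal{O}) \to H^1(X, \mathcal{O})$, Serre dual to the pull-back $H^0(X, K_X) \to H^0(\Sigma_a, K_{\Sigma_a})$; finiteness and surjectivity of $p_X$ make this pull-back injective, so the trace is surjective and $\on{Nm}$ is smooth along $\check{\mathcal{P}}$.

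The main obstacle is showing that the geometric fibres of $\check{\mathcal{P}}$ are connected, so that $\check{\mathcal{P}}$ is an abelian scheme rather than merely an extension of a finite group scheme by one. Here I would invoke Proposition \ref{prop: picard injective}: by the projection formula, $\on{Nm} \circ p_X^* = [n]$ on $\on{Pic}^0(X)$, and dualising the exact sequence associated with the surjection $\on{Nm}: \on{Pic}^0(\Sigma_a) \twoheadrightarrow \on{Pic}^0(X)$ (using autoduality of Jacobians and the identification of the dual of $\on{Nm}$ with $p_X^*$) identifies the component group $\pi_0(\ker \on{Nm})$ with (the Cartier dual of) $\ker(p_X^*)$; since $p_X^*$ is injective, this component group vanishes, so each fibre of $\check{\mathcal{P}}$ is a connected abelian variety. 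Finally, for the torsor assertion in (a), surjectivity of $\on{Nm}$ on the relevant $\on{Pic}^0$-components (again from $\on{Nm} \circ p_X^* = [n]$ being surjective) shows $\check{\mathcal{P}}_{L \otimes \det^{-1}((p_X)_*\mathcal{O}_\Sigma)}$ is étale-locally non-empty over $\mathcal{A}^0$, and translation by a local section identifies it with $\check{\mathcal{P}}$ as a $\check{\mathcal{P}}$-torsor.
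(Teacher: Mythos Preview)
Your proposal is correct and follows essentially the same approach as the paper: part (a) is deduced directly from Theorem \ref{theorem: spectral data GLn}, and for part (b) the crux is geometric connectedness of $\check{\mathcal{P}}$, which both you and the paper obtain from Proposition \ref{prop: picard injective} via the duality between $\on{Nm}$ and $p_X^*$. The paper is terser---it simply cites \cite{mumford}, Section 15, Theorem 1 for the implication that injectivity of the dual map $p_X^*$ forces $\ker(\on{Nm})$ to be connected---whereas you unpack this duality statement explicitly and also supply the tangent-space argument for smoothness of $\on{Nm}$ that the paper leaves implicit.
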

\begin{proof}
Part (a) follows from Theorem \ref{theorem: spectral data GLn}. For part (b), the only thing left to check is that $\check{\mathcal{P}}$ is geometrically connected over $\mathcal{A}^0$. Note that $\check{\mathcal{P}}$ is the kernel of the surjective homomorphism
\[
\on{Pic}^0(\Sigma/\mathcal{A}^0)\xrightarrow{\on{Nm}}\on{Pic}^0(X\times \mathcal{A}^0/\mathcal{A}^0)
\]
of abelian schemes over $\mathcal{A}^0$. The dual morphism is given by the pull-back map
\[\on{Pic}^0(X\times \mathcal{A}^0/\mathcal{A}^0)\xrightarrow{p_X^{*}}\on{Pic}^0(\Sigma/\mathcal{A}^0).
\]
The desired statement that $\check{\mathcal{P}}$ is geometrically connected over $\mathcal{A}^0$ follows from Proposition \ref{prop: picard injective}, see \cite{mumford} Section 15, Theorem 1. 
\end{proof}

Now we describe the $\on{PGL}_n$-side of the Hitchin fibration.  
\begin{definition}
    We denote by $\mathcal{M}^{d, \boldsymbol{\alpha}}_{\on{PGL}_n}$ the moduli stack of $M$-twisted quasi-parabolic $\on{PGL}_n$-Higgs bundles that admits a presentation as a semistable parabolic $\on{GL}_n$-Higgs bundle of degree $d$ and parabolic weights $\boldsymbol{\alpha}$ on each geometric fiber of $X/\mathcal{B}$. 
\end{definition}

\begin{remark}
    (a) For each line bundle $L$ on $X$ of degree $d$, we have an isomorphism \begin{equation}\label{eq: sln vs pgln}
        \mathcal{M}^{d, \boldsymbol{\alpha}}_{\on{PGL}_n}\cong [  (\mathcal{M}^{d, \boldsymbol{\alpha}}_{\on{GL}_n})^{\on{tr=0}}/\on{Pic}^0(X)]\cong [\mathcal{M}^{L, \boldsymbol{\alpha}}_{\on{SL}_n}/\Gamma],
    \end{equation}
where $(\mathcal{M}^{d, \boldsymbol{\alpha}}_{\on{GL}_n})^{\on{tr=0}}\subseteq \mathcal{M}^{d, \boldsymbol{\alpha}}_{\on{GL}_n}$ is the subspace characterized by the Higgs field having zero trace. 

(b) The marked point $q\in X(\mathcal{B})$ induces a lifting of the $\Gamma$-action from $\mathcal{M}^{L, \boldsymbol{\alpha}}_{\on{SL}_n}$ to the moduli stack $\mathfrak{M}^{L, \boldsymbol{\alpha}}_{\on{SL}_n}$, see Example \ref{example: picard stack}. It follows that the $\mu_n$-gerbe $\check{\alpha}$ on $\mathcal{M}^{L, \boldsymbol{\alpha}}_{\on{SL}_n}$ descends to a $\mu_n$-gerbe on $\mathcal{M}^{d, \boldsymbol{\alpha}}_{\on{PGL}_n}$. Let $L_d$ be the degree $d$ line bundle defined by $L_d=\mathcal{O}_X(d'q)\otimes\on{det}((p_X)_*\mathcal{O}_\Sigma)$, where $d'=d-d_{\boldsymbol{m}}$. We denote by $\hat{\alpha}$ the $\mu_n$-gerbe on $\mathcal{M}^{d, \boldsymbol{\alpha}}_{\on{PGL}_n}$ that comes from the $\mu_n$-gerbe $\check{\alpha}$ on $\mathcal{M}^{L_d, \boldsymbol{\alpha}}_{\on{SL}_n}$. 
    
(c) Via isomorphisms (\ref{eq: sln vs pgln}), the $\on{SL}_n$-version of the parabolic Hitchin map descends to the $\on{PGL}_n$-version of the parabolic Hitchin map
\[
    \hat{h}: \mathcal{M}^{d, \boldsymbol{\alpha}}_{\on{PGL}_n}\longrightarrow \mathcal{A}. 
\]
\end{remark}
Now we describe the fiber of $\hat{h}$ over $\mathcal{A}^0\subset \mathcal{A}$. Notice that the action of $\on{Pic}(X)$ on $\on{Pic}(\Sigma/\mathcal{A}^0)$ is free by Proposition \ref{prop: picard injective}. We fix the following notations:
\begin{definition}
    Let $\hat{\mathcal{P}}=\on{Pic}^0(\Sigma/\mathcal{A}^0)/\on{Pic}^0(X)$ and $\hat{\mathcal{P}}_d=\on{Pic}^d(\Sigma/\mathcal{A}^0)/\on{Pic}^0(X)$.
\end{definition}

Recall that $\check{\mathcal{P}}$ is an abelian scheme over $\mathcal{A}^0$ that lies in the following short exact sequence
\[
0\xrightarrow{}\check{\mathcal{P}}\xrightarrow{}\on{Pic}^0(\Sigma/\mathcal{A}^0)\xrightarrow{\on{Nm}}\on{Pic}^0(X\times \mathcal{A}^0/\mathcal{A}^0)\xrightarrow{}0.
\]
The dual short exact sequence is given by 
\[
0\longrightarrow\on{Pic}^0(X\times \mathcal{A}^0/\mathcal{A}^0)\xrightarrow{p_X^{*}}\on{Pic}^0(\Sigma/\mathcal{A}^0)\longrightarrow \check{\mathcal{P}}^{\vee}\longrightarrow 0,
\]
therefore $\hat{\mathcal{P}}$ is an abelian scheme over $\mathcal{A}^0$ that is dual to $\check{\mathcal{P}}$. Combining with Proposition \ref{prop: spectral data sln}, we have

\begin{prop}
(a) $(\mathcal{M}^{d, \boldsymbol{\alpha}}_{\on{PGL}_n})^0\coloneqq\mathcal{M}^{d, \boldsymbol{\alpha}}_{\on{PGL}_n}\times_{\mathcal{A}} \mathcal{A}^0$ is isomorphic to $\hat{\mathcal{P}}_{d'}$, where $d'=d-d_{\boldsymbol{m}}$.

(b) $\hat{\mathcal{P}}$ is an abelian scheme over $\mathcal{A}^0$ that is dual to $\check{\mathcal{P}}$, and $(\mathcal{M}_{\on{PGL}_n}^{d, \boldsymbol{\alpha}})^0$ is a $\hat{\mathcal{P}}$-torsor. 
\end{prop}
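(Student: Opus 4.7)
The plan is to deduce both statements by combining the $\on{GL}_n$ spectral description (Theorem \ref{theorem: spectral data GLn}) with the presentation $\mathcal{M}^{d,\boldsymbol{\alpha}}_{\on{PGL}_n}\cong[(\mathcal{M}^{d,\boldsymbol{\alpha}}_{\on{GL}_n})^{\on{tr}=0}/\on{Pic}^0(X)]$ from (\ref{eq: sln vs pgln}), and then quoting the duality already set up for $\check{\mathcal{P}}$.

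For part (a), first I would observe that the inclusion $\mathcal{A}\hookrightarrow \mathcal{A}_P$ is cut out by the vanishing of the $\Gamma(X,M(q))$-component, which on the moduli side corresponds exactly to the trace-zero locus, so $h^{-1}(\mathcal{A}^0)\subset \mathcal{M}^{d,\boldsymbol{\alpha}}_{\on{GL}_n}$ agrees with $(\mathcal{M}^{d,\boldsymbol{\alpha}}_{\on{GL}_n})^{\on{tr}=0}\times_{\mathcal{A}}\mathcal{A}^0$. By Theorem \ref{theorem: spectral data GLn}, $h^{-1}(\mathcal{A}^0)\cong \on{Pic}(\Sigma/\mathcal{A}^0)$, with the isomorphism given by $(p_X)_*$; since pushforward shifts degree by $d_{\boldsymbol{m}}=\deg\det((p_X)_*\mathcal{O}_\Sigma)$, the degree-$d$ component on the Higgs side corresponds to $\on{Pic}^{d'}(\Sigma/\mathcal{A}^0)$ with $d'=d-d_{\boldsymbol{m}}$. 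Taking the further quotient by $\on{Pic}^0(X)$, which acts freely on $\on{Pic}^{d'}(\Sigma/\mathcal{A}^0)$ by Proposition \ref{prop: picard injective}, yields $(\mathcal{M}^{d,\boldsymbol{\alpha}}_{\on{PGL}_n})^0\cong\hat{\mathcal{P}}_{d'}$ by the very definition of $\hat{\mathcal{P}}_{d'}$.

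For part (b), the duality $\hat{\mathcal{P}}\cong\check{\mathcal{P}}^{\vee}$ is already established in the paragraph immediately preceding the proposition: the short exact sequence
\[
0\to\check{\mathcal{P}}\to \on{Pic}^0(\Sigma/\mathcal{A}^0)\xrightarrow{\on{Nm}}\on{Pic}^0(X\times\mathcal{A}^0/\mathcal{A}^0)\to 0
\]
dualizes, via the canonical self-duality of the relative Jacobians, to the sequence
\[
0\to \on{Pic}^0(X\times\mathcal{A}^0/\mathcal{A}^0)\xrightarrow{p_X^{*}}\on{Pic}^0(\Sigma/\mathcal{A}^0)\to \check{\mathcal{P}}^\vee\to 0,
\]
whose cokernel is $\hat{\mathcal{P}}$ by definition; one needs injectivity of $p_X^{*}$ at each geometric point, which is Proposition \ref{prop: picard injective}. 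In particular $\hat{\mathcal{P}}$ is an abelian scheme over $\mathcal{A}^0$. The torsor assertion is then immediate from part (a): $\hat{\mathcal{P}}_{d'}=\on{Pic}^{d'}(\Sigma/\mathcal{A}^0)/\on{Pic}^0(X)$ is a torsor for $\hat{\mathcal{P}}=\on{Pic}^0(\Sigma/\mathcal{A}^0)/\on{Pic}^0(X)$ by construction.

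The only real subtlety that I expect to require care is the bookkeeping around the degree shift $d_{\boldsymbol{m}}$ and verifying that the trace-zero identification of $\mathcal{A}\hookrightarrow \mathcal{A}_P$ is compatible with the spectral description (i.e.\ the $(i-t_i)q$ twists do not obstruct the identification); the rest reduces to invoking results already in place. No new geometric input is needed beyond Proposition \ref{prop: picard injective}, which powers both the freeness of the $\on{Pic}^0(X)$-action in (a) and the duality in (b).
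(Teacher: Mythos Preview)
Your proposal is correct and matches the paper's own treatment. In the paper the proposition is not given a separate proof but is stated as an immediate consequence of the preceding duality discussion together with the $\on{SL}_n$ spectral description (Proposition~\ref{prop: spectral data sln}); you instead route part~(a) through the $\on{GL}_n$ presentation in (\ref{eq: sln vs pgln}) and Theorem~\ref{theorem: spectral data GLn} directly, but since Proposition~\ref{prop: spectral data sln} is itself derived from Theorem~\ref{theorem: spectral data GLn}, the two paths are equivalent, and your identification of Proposition~\ref{prop: picard injective} as the only substantive input is exactly right.
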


Now we assume the base scheme $\mathcal{B}=\on{Spec}(R)$ is an affine integral scheme such that $n$ is invertible in $R$ and $R$ contains all the $n^{2g}$-th roots of unity. We also assume the finite group scheme $\on{Pic}^0(X)[n]$ is constant over $\mathcal{B}$. Recall that for any $\mu_n$-gerbe $\alpha$ and integer $d$, we can construct a new $\mu_n$-gerbe $\alpha^d$ by taking the induced torsor for the map $[d]: B\mu_n\to B\mu_n$ defined by mapping a line bundle to its $d$-th power. We have the following proposition which we prove in Section \ref{subsection: duality between gerbes}.

\begin{prop}\label{prop: dual Hitchin system}
Let $\boldsymbol{\alpha}_1$ (resp. $\boldsymbol{\alpha}_2$) be a set of parabolic weights that is generic for degree $d$ (resp. $e$). Then the quintuple $(\mathcal{M}^{L_d, \boldsymbol{\alpha}_1}_{\on{SL}_n}, \mathcal{M}^{e, \boldsymbol{\alpha}_2}_{\on{PGL}_n},\mathcal{A}, \check{\alpha}^{e'}, \hat{\alpha}^{d'})$ is a dual pair of weak abstract Hitchin systems over $\mathcal{B}$ in the sense of Definition \ref{def: dual Hitchin fibration}. Here  $d'=d-d_{\boldsymbol{m}}$, $e'=e-d_{\boldsymbol{m}}$ and $L_d$ is the degree $d$ line bundle defined by $L_d=\mathcal{O}_X(d'q)\otimes\on{det}((p_X)_*\mathcal{O}_\Sigma)$.
\end{prop}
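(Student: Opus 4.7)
The plan is to verify the three axioms of Definition \ref{def: dual Hitchin fibration} in turn, having first confirmed that each of the two moduli spaces individually forms a weak abstract Hitchin system over $\mathcal{B}$. For the $\on{SL}_n$ side this is essentially contained in earlier results: $\mathcal{M}^{L_d,\boldsymbol{\alpha}_1}_{\on{SL}_n}$ is smooth and quasi-projective because $\boldsymbol{\alpha}_1$ is generic for $d$, the Hitchin map $\check{h}$ is proper by Theorem \ref{thm: Hitchin map proper}, and by Proposition \ref{prop: spectral data sln} its restriction over $\mathcal{A}^0$ is a torsor under the abelian scheme $\check{\mathcal{P}}$. For the $\on{PGL}_n$ side, admissibility follows from the presentation $\mathcal{M}^{e,\boldsymbol{\alpha}_2}_{\on{PGL}_n}\cong [\mathcal{M}^{L_e,\boldsymbol{\alpha}_2}_{\on{SL}_n}/\Gamma]$ of (\ref{eq: sln vs pgln}), using the hypotheses that $|\Gamma|=n^{2g_X}$ is invertible in $R$ and that $R$ contains enough roots of unity; properness descends from the $\on{SL}_n$ side. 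In both cases the codimension condition $\on{codim}(\mathcal{M}\setminus\mathcal{M}^0)\geq 1$ is immediate from openness and density of $\mathcal{A}^0\subset \mathcal{A}$ together with properness of the Hitchin map.

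For condition (a), the dual-abelian-scheme relation $\hat{\mathcal{P}}\cong\check{\mathcal{P}}^\vee$ is already spelled out in the excerpt via the dual short exact sequences
\[
0 \to \check{\mathcal{P}} \to \on{Pic}^0(\Sigma/\mathcal{A}^0) \xrightarrow{\on{Nm}} \on{Pic}^0(X\times\mathcal{A}^0/\mathcal{A}^0) \to 0,
\]
\[
0 \to \on{Pic}^0(X\times\mathcal{A}^0/\mathcal{A}^0) \xrightarrow{p_X^*} \on{Pic}^0(\Sigma/\mathcal{A}^0) \to \hat{\mathcal{P}} \to 0,
\]
together with the injectivity of $p_X^*$ from Proposition \ref{prop: picard injective}. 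The required isogeny $\phi:\check{\mathcal{P}}\to\hat{\mathcal{P}}$ is produced as the composition $\check{\mathcal{P}}\hookrightarrow\on{Pic}^0(\Sigma/\mathcal{A}^0)\twoheadrightarrow\hat{\mathcal{P}}$. Its kernel $\check{\mathcal{P}}\cap p_X^*\on{Pic}^0(X\times\mathcal{A}^0/\mathcal{A}^0)$ identifies via the identity $\on{Nm}\circ p_X^*=[n]$ and the injectivity of $p_X^*$ with the constant group scheme $\on{Pic}^0(X)[n]$ of order $n^{2g_X}$. Since $n$ is invertible in $R$, this kernel is étale and $\phi$ is an étale isogeny of degree invertible in $R$.

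For condition (b), I will appeal to the stacky torsor--gerbe duality established in Theorem \ref{thm: gerbe duality stronger version}: applied to the dual pair $(\check{\mathcal{P}},\hat{\mathcal{P}})$ over $\mathcal{A}^0$ together with the gerbes $\check{\alpha}^{e'}$ and $\hat{\alpha}^{d'}$, it yields natural isomorphisms of $\hat{\mathcal{P}}$- and $\check{\mathcal{P}}$-torsors
\[
\on{Split}'\bigl((\mathcal{M}^{L_d,\boldsymbol{\alpha}_1}_{\on{SL}_n})^0/\mathcal{A}^0,\,\check{\alpha}^{e'}\bigr)\cong (\mathcal{M}^{e,\boldsymbol{\alpha}_2}_{\on{PGL}_n})^0,
\]
\[
\on{Split}'\bigl((\mathcal{M}^{e,\boldsymbol{\alpha}_2}_{\on{PGL}_n})^0/\mathcal{A}^0,\,\hat{\alpha}^{d'}\bigr)\cong (\mathcal{M}^{L_d,\boldsymbol{\alpha}_1}_{\on{SL}_n})^0.
\]
The key bookkeeping is to verify that the degree shifts $d'=d-d_{\boldsymbol{m}}$ and $e'=e-d_{\boldsymbol{m}}$ emerging from the spectral description match the powers of the $\mu_n$-gerbes $\check{\alpha}$ and $\hat{\alpha}$ demanded by the proposition; this is traced through the norm map and the twisting isomorphism $\mathcal{M}^{L_1,\boldsymbol{\alpha}}_{\on{SL}_n}\cong \mathcal{M}^{L_2,\boldsymbol{\alpha}}_{\on{SL}_n}$ for line bundles of the same degree.

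For condition (c), suppose $a\in\mathcal{A}(\mathcal{O}_F)\cap\mathcal{A}^0(F)$ is such that both Hitchin fibres carry $F$-rational points. An $F$-point on $(\mathcal{M}^{L_d,\boldsymbol{\alpha}_1}_{\on{SL}_n})_{a_F}$ trivializes the $\check{\mathcal{P}}_{a_F}$-torsor and, through the duality of (b), expresses the restriction of $\check{\alpha}^{e'}$ as the pairing with a point of $\hat{\mathcal{P}}_{d',a_F}$; the assumed existence of an $F$-point on the other side exhibits exactly such a paired point defined over $F$, forcing the induced $\mathbb{G}_m$-gerbe to split. Reversing the roles handles $\hat{\alpha}^{d'}$. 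The main obstacle I anticipate is precisely the bookkeeping in (b): matching the degree shifts produced by the spectral construction against the powers of $\check{\alpha}$, $\hat{\alpha}$ demanded by the dual pair, and the careful accounting of the twist by $\on{det}^{-1}((p_X)_*\mathcal{O}_\Sigma)$ that appears in Proposition \ref{prop: spectral data sln} on the $\on{SL}_n$ side but is absorbed into $d_{\boldsymbol{m}}$ on the $\on{PGL}_n$ side.
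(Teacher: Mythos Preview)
Your verification of the weak abstract Hitchin system axioms and of condition (a) matches the paper's, though you should note explicitly that admissibility of $\mathcal{M}^{e,\boldsymbol{\alpha}_2}_{\on{PGL}_n}$ requires the $\Gamma$-action to be \emph{generically free}; the paper obtains this from Proposition \ref{prop: picard injective}, which shows $\Gamma$ already acts freely on $(\mathcal{M}^{L_e,\boldsymbol{\alpha}_2}_{\on{SL}_n})^0$.

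For condition (b) there is a small but real gap. Theorem \ref{thm: gerbe duality stronger version} does not literally output isomorphisms involving $\on{Split}'$: it identifies the \emph{commutative-group-stack} splitting spaces $\widetilde{\on{Split}}^c(\check{\mathcal{P}}_{\mathbb{Z}}/\mathcal{A}^0,\check{\alpha}^{e}_{\mathbb{Z}})$ and $\widetilde{\on{Split}}^c(\hat{\mathcal{P}}_{\mathbb{Z}}/\mathcal{A}^0,\hat{\alpha}^{d}_{\mathbb{Z}})$ with certain $\mathbb{G}_m$-gerbes over $\hat{\mathfrak{P}}_e$ and $\check{\mathfrak{P}}_{\mathcal{O}(dq)}$. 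To pass from these to the $\mathcal{P}^\vee$-torsors $\on{Split}'$ that Definition \ref{def: dual Hitchin fibration}(b) actually demands, the paper invokes Lemma \ref{lemma: compare splitting def}, which identifies $\on{Split}^c(\mathcal{P}_{\mathbb{Z}},\alpha_{\mathbb{Z}})$, $\widetilde{\on{Split}}^c(\mathcal{P},\alpha)$, and $\on{Split}'(\mathcal{P}',\alpha')$. You should cite that lemma.

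For condition (c) your argument diverges from the paper's and, as stated, does not go through. The paper argues asymmetrically. On the $\on{SL}_n$ side it uses properness of $\check h$ to promote an $F$-point of $\check h^{-1}(a_F)$ to an $\mathcal{O}_F$-point, and then applies \cite[Lemma 6.5]{MWZ} to conclude that the induced $\mathbb{G}_m$-gerbe from $\check{\alpha}^{e'}$ splits over $\check h^{-1}(a_F)$. Only once this splitting is in hand does the paper treat the $\on{PGL}_n$ side: the splitting lets an $F$-point of $(\check{\mathcal{P}}_{\mathcal{O}(d'q)})_{a_F}$ lift to an $F$-point of $(\check{\mathfrak{P}}_{\mathcal{O}(d'q)}\times^{B\mu_n,[e']}B\mathbb{G}_m)_{a_F}$, and Theorem \ref{thm: gerbe duality stronger version} then translates that lift directly into a splitting of $\hat{\alpha}^{d'}$ over $\hat h^{-1}(a_F)$. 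Your sketch tries to read off both splittings symmetrically from an $F$-point of $\on{Split}'$, but an $F$-point of $\on{Split}'(\mathcal{P}',\alpha')$ is not by definition a splitting of the $\mathbb{G}_m$-gerbe $\alpha'$ on the torsor $\mathcal{P}'$; via Lemma \ref{lemma: compare splitting def} it corresponds to a group-theoretic splitting of the gerbe $\alpha$ on the abelian scheme $\mathcal{P}$, and transporting that to a splitting on the torsor $\mathcal{P}'$ still needs an extra step. The integral-model argument via \cite[Lemma 6.5]{MWZ} is what supplies the missing input.
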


Applying $p$-adic integration to this dual pair of weak abstract Hitchin systems leads to the second main theorem of this paper, which we prove in Section \ref{section: proof of mirror symmetry}. 

\begin{theorem}\label{thm: main theorem}
Let $X$ be a smooth complex projective curve. Let $L$ be a line bundle on $X$ of degree $d$ and let $\boldsymbol{\alpha}_1$ (resp. $\boldsymbol{\alpha}_2$) be a set of parabolic weights that is generic for degree $d$ (resp. $e$). Then we have the following equality of $E$-polynomials:
\[
 E(\mathcal{M}^{L, \boldsymbol{\alpha}_1}_{\on{SL}_n}; u,v)=E_{st}(\mathcal{M}^{e, \boldsymbol{\alpha}_2}_{\on{PGL}_n}, \hat{\alpha}^{d'}; u,v),
\]  
where $d'=d-d_{\boldsymbol{m}}$. 
\end{theorem}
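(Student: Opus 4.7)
The plan is to adapt the $p$-adic integration strategy of \cite{MWZ} to the parabolic setting, using the dual pair of weak abstract Hitchin systems supplied by Proposition \ref{prop: dual Hitchin system}. Since tensoring with an $n$-th root of $L^{-1}\otimes L_d$ yields an isomorphism $\mathcal{M}^{L, \boldsymbol{\alpha}_1}_{\on{SL}_n} \cong \mathcal{M}^{L_d, \boldsymbol{\alpha}_1}_{\on{SL}_n}$, I first reduce to $L=L_d$. After spreading out to a finite-type $\mathbb{Z}$-subalgebra $R\subset\mathbb{C}$ in which $n$ is invertible and which contains the $n^{2g}$-th roots of unity, Theorem \ref{thm: stringy point count vs E-polynomial} further reduces the $E$-polynomial identity to the equality of (twisted) stringy point-counts
\[
\#\mathcal{M}^{L_d, \boldsymbol{\alpha}_1}_{\on{SL}_n}(\mathbb{F}_q) \;=\; \#^{\hat{\alpha}^{d'}}_{st}\bigl(\mathcal{M}^{e, \boldsymbol{\alpha}_2}_{\on{PGL}_n,\mathbb{F}_q}\bigr)
\]
for every finite residue field $\mathbb{F}_q$ of $R$; the ordinary count appears on the left because $\mathcal{M}^{L_d, \boldsymbol{\alpha}_1}_{\on{SL}_n}$ is a smooth variety.

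For each such $\mathbb{F}_q$, I lift $R\to\mathbb{F}_q$ to a map $R\to \mathcal{O}_F$ for a non-Archimedean local field $F$. Weil's theorem converts the left-hand count into $q^{\dim}\int_{\mathcal{M}^{L_d}_{\on{SL}_n}(\mathcal{O}_F)}d\mu_{can}$, while Theorem \ref{thm: stringy point count=p-adic integral} converts the right-hand count into $q^{\dim}\int_{M_{\on{PGL}}(\mathcal{O}_F)^{\sharp}} \bar{f}_{\hat{\alpha}^{d'}}\,d\mu_{orb}$. Using the gauge forms constructed in Section \ref{Section: Volume forms} together with Proposition \ref{prop: properties of p-adic integration}(a), both integrals may be restricted to the generic loci $M(\mathcal{O}_F)^{\sharp}$ and rewritten in the format required by Theorem \ref{main thm: p-adic integral for Hitchin}.

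Applying Theorem \ref{main thm: p-adic integral for Hitchin} to the dual pair $(\mathcal{M}^{L_d, \boldsymbol{\alpha}_1}_{\on{SL}_n}, \mathcal{M}^{e, \boldsymbol{\alpha}_2}_{\on{PGL}_n}, \mathcal{A}, \check{\alpha}^{e'}, \hat{\alpha}^{d'})$ from Proposition \ref{prop: dual Hitchin system} yields
\[
\int_{M_{\on{SL}}(\mathcal{O}_F)^{\sharp}} \bar{f}_{\check{\alpha}^{e'}} |\omega_{\on{SL}}| \;=\; \int_{M_{\on{PGL}}(\mathcal{O}_F)^{\sharp}} \bar{f}_{\hat{\alpha}^{d'}} |\omega_{\on{PGL}}|.
\]
The right-hand side is the $p$-adic integral computing the $\on{PGL}_n$ twisted stringy count. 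To identify the left-hand side with the ordinary $\on{SL}_n$ count, I invoke condition (c) of Definition \ref{def: dual Hitchin fibration}: over any base point of $\mathcal{A}^0(F)$ where both Hitchin fibres carry $F$-rational points, the pulled-back gerbe $\check{\alpha}^{e'}$ splits and $f_{\check{\alpha}^{e'}}\equiv 1$. A standard character-sum argument using the isogeny $\phi:\check{\mathcal{P}}\to\hat{\mathcal{P}}$ (as in \cite{MWZ} Section~6) shows the contribution of the complementary locus (where the $\on{PGL}_n$-fibre lacks an $F$-point) vanishes, so the left integral collapses to $\int_{M_{\on{SL}}(\mathcal{O}_F)^{\sharp}} |\omega_{\on{SL}}|= q^{-\dim}\#\mathcal{M}^{L_d, \boldsymbol{\alpha}_1}_{\on{SL}_n}(\mathbb{F}_q)$, which finishes the point-count identity.

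The main obstacle, compared to the non-parabolic setting of \cite{MWZ}, is that the absence of a regular locus of codimension $\geq 2$ blocks the direct extension of translation-invariant relative top forms across the discriminant. The gauge forms must instead be built through the cotangent bundle of the moduli space of vector bundles (Section \ref{Section: Volume forms}), and I must then verify that the induced translation-invariant forms $\tilde{\omega}'_{\on{SL}}$ and $\tilde{\omega}'_{\on{PGL}}$ on the dual abelian schemes $\check{\mathcal{P}}$ and $\hat{\mathcal{P}}$ are exchanged by $\phi$, so that the hypothesis $\phi^{*}\tilde{\omega}'_{\on{PGL}}=\tilde{\omega}'_{\on{SL}}$ of Theorem \ref{main thm: p-adic integral for Hitchin} is met. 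Once this compatibility is established, the reductions above complete the proof.
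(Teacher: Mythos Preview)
Your overall architecture matches the paper's: spread out to a finitely generated $R\subset\mathbb{C}$, reduce to twisted stringy point-counts via Theorem~\ref{thm: stringy point count vs E-polynomial}, convert to $p$-adic integrals via Theorem~\ref{thm: stringy point count=p-adic integral}, and then apply Theorem~\ref{main thm: p-adic integral for Hitchin} to the dual pair of Proposition~\ref{prop: dual Hitchin system}. Two points deserve correction or sharpening.

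\textbf{The $\on{SL}_n$ gerbe.} Your argument that $\int \bar f_{\check\alpha^{e'}}|\omega_{\on{SL}}|=\int|\omega_{\on{SL}}|$ does not work as stated. Over a base point $a_F$ for which the $\on{SL}_n$-fibre has an $F$-point but the degree-$e$ $\on{PGL}_n$-fibre does not, your character-sum argument would force $\int_{h^{-1}(a_F)}\bar f_{\check\alpha^{e'}}|\tilde\omega|=0$, whereas the untwisted fibre integral is strictly positive; so the two sides cannot agree there. The paper avoids this entirely by observing, \emph{before} passing to $p$-adic integrals, that since $\mathcal{M}^{L_d,\boldsymbol\alpha_1}_{\on{SL}_n}$ is a smooth variety its (twisted) stringy $E$-polynomial is independent of the gerbe, so one may freely insert $\check\alpha^{e'}$ and prove the symmetric identity $E_{st}(\mathcal{M}^{L_d}_{\on{SL}_n},\check\alpha^{e'})=E_{st}(\mathcal{M}^{e}_{\on{PGL}_n},\hat\alpha^{d'})$. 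Equivalently, at the $p$-adic level, $\mathcal{M}^{L_d}_{\on{SL}_n}$ is a scheme over $\mathcal{O}_F$, so every point of $M(\mathcal{O}_F)^\sharp$ is already an $\mathcal{O}_F$-point of the stack and the pulled-back gerbe lies in $\on{Br}(\mathcal{O}_F)=0$; hence $f_{\check\alpha^{e'}}\equiv 1$ identically, with no fibrewise case analysis needed.

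\textbf{The form compatibility.} You correctly isolate the hypothesis $\phi^{*}\tilde\omega'_{\on{PGL}}=\tilde\omega'_{\on{SL}}$ of Theorem~\ref{main thm: p-adic integral for Hitchin} as the crux, but you do not say how to check it. The paper's mechanism is Proposition~\ref{prop: comparision between degrees}: the $\Gamma$-invariant gauge form on $\mathcal{M}^{L_e,\boldsymbol\alpha_2}_{\on{SL}_n}$ descends to the $\on{PGL}_n$-form, so $\phi^{*}\tilde\omega'_{\on{PGL}}=\tilde\omega'_{L_e}$ on $\check{\mathcal{P}}$, while the $\on{SL}_n$-side gives $\tilde\omega'_{L_d}$; Proposition~\ref{prop: comparision between degrees} then shows $\tilde\omega'_{L_d}=\tilde\omega'_{L_e}$ via the Abel--Jacobi comparison of tautological $1$-forms (Proposition~\ref{prop: BB}). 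Without this degree-independence step the hypothesis of Theorem~\ref{main thm: p-adic integral for Hitchin} is unverified and the argument is incomplete.
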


\subsection{Torsor-gerbe duality for Hitchin systems}\label{subsection: duality between gerbes}
In this subsection, we discuss the duality relation between the two $\mu_n$-gerbes $\check{\alpha}$ and $\hat{\alpha}$ on the moduli space of parabolic $\on{SL}_n$- and $\on{PGL}_n$-Higgs bundles. Our discussion uses the framework of duality for certain ``good" commutative stacks. We refer the readers to \cite{Arinkin} and \cite{BB} for a detailed account of this theory. Let $\mathcal{B}$ be a Noetherian scheme such that $n$ is invertible on $\mathcal{B}$. Let $\mathcal{G}$ be a commutative group stack locally of finite type over $\mathcal{B}$. The dual commutative group stack $\mathcal{G}^{\vee}=\on{Hom}(\mathcal{G}, B\mathbb{G}_m)$ classifies 1-morphisms of group stacks from $\mathcal{G}$ to $B\mathbb{G}_m$. The main examples we are going to consider are:

\begin{example}\label{ex: commutative group stacks}
\mbox{}
    \begin{enumerate}
        \item $\mathcal{G}=\mathbb{Z}$, $\mathcal{G}^{\vee}=B\mathbb{G}_m$,
        \item $\mathcal{G}=B\mathbb{G}_m$, $\mathcal{G}^{\vee}=\mathbb{Z}$, 
        \item $\mathcal{G}=\mathbb{Z}_n$,  $\mathcal{G}^{\vee}=B\mu_n$. Here $\mu_n=\on{Spec}(\mathbb{Z}[x]/(x^n-1))$,
        \item $\mathcal{G}=\mu_n$,  $\mathcal{G}^{\vee}=B\mathbb{Z}_n$,
        \item $\mathcal{G}=\mathcal{P}$ is an abelian scheme, then $\mathcal{G}^{\vee}=\mathcal{P}^{\vee}$ is the dual abelian scheme. 
    \end{enumerate}
\end{example}
In fact, all the commutative group stacks we are going to consider are \'etale locally isomorphic to a finite product of commutative group stacks that appear in the Example \ref{ex: commutative group stacks} above. 

\begin{example}\label{example: picard stack}
Let $X$ be a smooth projective curve over $\mathcal{B}$. We denote by $\mathfrak{Pic}(X)$ (resp. $\on{Pic}(X)$) the Picard stack (resp. Picard scheme) of $X$. The natural map $\mathfrak{Pic}(X)\longrightarrow \on{Pic}(X)$ gives $\mathfrak{Pic}(X)$ the structure of a $\mathbb{G}_m$-gerbe over $\on{Pic}(X)$. If we further assume there exists a marked point $q\in X(\mathcal{B})$, then $\on{Pic}(X)$ can be identified with the moduli stack of line bundles on $X$ with a trivialization at $q$. This identification induces a splitting $\mathfrak{Pic}(X)\cong\on{Pic}(X)\times B\mathbb{G}_m$ of the $\mathbb{G}_m$-gerbe. Since the marked point $q$ also induces an isomorphism  $\on{Pic}(X)\cong\on{Pic}^0(X)\times \mathbb{Z}$, we have $\mathfrak{Pic}(X)\cong\on{Pic}^0(X)\times B\mathbb{G}_m\times \mathbb{Z}$. Note that a marked point always exists \'etale locally. Since $\on{Pic}^0(X)$ is a self-dual abelian scheme and ($B\mathbb{G}_m$, $\mathbb{Z}$) are dual to each other as commutative group stacks, we have an isomorphism $\mathfrak{Pic}(X)^{\vee}\cong \mathfrak{Pic}(X)$. It is well-known that this isomorphism does not depend on the choice of the marked point, hence it can be glued to a global isomorphism. We include a proof of this fact for completion of the exposition. 
\end{example}

\begin{lemma}
    The isomorphism $\mathfrak{Pic}(X)^{\vee}\cong \mathfrak{Pic}(X)$ in Example \ref{example: picard stack} does not depend on the choice of the marked point $q\in X(\mathcal{B})$. 
\end{lemma}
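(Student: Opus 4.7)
The plan is to exhibit a canonical self-duality $\Phi\colon\mathfrak{Pic}(X)\xrightarrow{\sim}\mathfrak{Pic}(X)^{\vee}$, manifestly independent of any marked point, and then verify that the construction of Example \ref{example: picard stack} reproduces $\Phi$ for every $q\in X(\mathcal{B})$; this immediately forces the isomorphisms arising from different marked points to coincide.

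For the canonical construction I would use the Deligne pairing: for a pair of families $L_{1},L_{2}$ of line bundles on $X$, set
\[
\langle L_{1},L_{2}\rangle \;:=\; \det R\Gamma(X,L_{1}\otimes L_{2})\otimes \det R\Gamma(X,\mathcal{O}_{X})\otimes \det R\Gamma(X,L_{1})^{-1}\otimes \det R\Gamma(X,L_{2})^{-1}.
\]
This pairing is functorial, symmetric, and bi-multiplicative, so it defines a biextension line bundle on $\mathfrak{Pic}(X)\times\mathfrak{Pic}(X)$ and hence a morphism $\Phi\colon \mathfrak{Pic}(X)\to\mathfrak{Pic}(X)^{\vee}$ of commutative group stacks, which makes no reference to any point of $X$.

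To identify $\Phi$ with the marked-point construction, fix $q\in X(\mathcal{B})$ and use the induced decomposition $\mathfrak{Pic}(X)\cong \on{Pic}^{0}(X)\times\mathbb{Z}\times B\mathbb{G}_{m}$. One checks that the restriction of $\Phi$ to $\on{Pic}^{0}(X)\times \on{Pic}^{0}(X)$ is the canonical theta polarization of the Jacobian, matching the $\on{Pic}^{0}(X)$-component of Example \ref{example: picard stack}, while a direct Riemann--Roch calculation gives $\langle\mathcal{O}_{X}(dq),V\otimes\mathcal{O}_{X}\rangle\cong V^{d}$, recovering the canonical pairing $\mathbb{Z}\times B\mathbb{G}_{m}\to B\mathbb{G}_{m}$. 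The potential cross-terms vanish: the map $\on{Pic}^{0}(X)\to \mathbb{Z}$ is zero by connectedness of $\on{Pic}^{0}(X)$, and the character $\on{Pic}^{0}(X)\to B\mathbb{G}_{m}$ arising from $\langle L_{0},\mathcal{O}_{X}(dq)\rangle$ becomes canonically trivial once $L_{0}$ is rigidified by the very trivialization at $q$ used in the splitting. Hence $\Phi$ coincides with the marked-point construction for every $q$, and the lemma follows.

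The main obstacle is the verification of the cross-term vanishing, which requires careful bookkeeping of how the trivializations at $q$ used to split $\on{Pic}(X)\cong\on{Pic}^{0}(X)\times\mathbb{Z}$ interact with the determinant-of-cohomology trivializations entering the Deligne pairing. Once these match, the coincidence of the marked-point construction with the intrinsic $\Phi$ is formal, and the independence of $q$ becomes automatic.
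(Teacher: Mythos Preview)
Your approach is correct and takes a genuinely different route from the paper. The paper argues by direct comparison: given two marked points $q_{1},q_{2}$, it writes down the transition automorphism $\rho_{2}\circ\rho_{1}^{-1}$ of $\on{Pic}^{0}(X)\times B\mathbb{G}_{m}\times\mathbb{Z}$ explicitly and then checks that its dual $(\rho_{2}\circ\rho_{1}^{-1})^{\vee}$ equals its inverse, by observing that the off-diagonal maps $\mathbb{Z}\to\on{Pic}^{0}(X)$, $d\mapsto\mathcal{O}_{X}(dq_{1}-dq_{2})$ and $\on{Pic}^{0}(X)\to B\mathbb{G}_{m}$, $\mathcal{L}\mapsto\mathcal{L}|_{q_{1}}\otimes(\mathcal{L}|_{q_{2}})^{-1}$ are dual to each other. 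Your argument instead produces an intrinsic self-duality via the Deligne pairing and checks it against the marked-point splitting once. Both hinge on the same identity $\langle L_{0},\mathcal{O}_{X}(dq)\rangle\cong (L_{0}|_{q})^{d}$, which in your version kills the cross-term and in the paper's version is exactly the duality between the two off-diagonal maps. Your method is more conceptual and explains \emph{why} the independence holds (there is a canonical object everyone is computing), while the paper's computation is shorter and self-contained, avoiding any appeal to properties of the Deligne pairing or determinant-of-cohomology. One small terminological point: what you restrict to on $\on{Pic}^{0}(X)\times\on{Pic}^{0}(X)$ is the Poincar\'e biextension realizing the canonical autoduality of the Jacobian; calling it ``the theta polarization'' is slightly loose, though the intended identification is clear.
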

\begin{proof}
    Let $q_1$ and $q_2$ be two marked points of $X$. Let $\rho_1$ (resp. $\rho_2$) be the isomorphisms between $\mathfrak{Pic}(X)$ and $\on{Pic}^0(X)\times B\mathbb{G}_m\times \mathbb{Z}$ described in Example \ref{example: picard stack} using $q_1$ (resp. $q_2$). Let $S$ be a scheme over $\mathcal{B}$ and let $(\mathcal{L}, \mathcal{F},d)$ be an $S$-point of $\on{Pic}^0(X)\times B\mathbb{G}_m\times \mathbb{Z}$, where $d\in \mathbb{Z}$, $\mathcal{L}\in \on{Pic}_{gp}(X\times_{\mathcal{B}} S)/p_S^*\on{Pic}_{gp}(S)$ and $\mathcal{F}\in \on{Pic}_{gp}(S)$. Here $\on{Pic}_{gp}(Y)$ stands for the Picard group of $Y$, and $p_S: X\times_{\mathcal{B}} S\to S$ is the projection map.  The transition automorphism
    \[
    \rho_2\circ\rho_1^{-1}: \on{Pic}^0(X)\times B\mathbb{G}_m\times \mathbb{Z}\to \on{Pic}^0(X)\times B\mathbb{G}_m\times \mathbb{Z}
    \]
    maps $(\mathcal{L}, \mathcal{F},d)$ to
    \[
    \rho_2\circ\rho_1^{-1}(\mathcal{L}, \mathcal{F},d)=(\mathcal{L}(dq_1-dq_2), \mathcal{F}\otimes (\mathcal{L}(dq_1)|_{{\{q_1}\}\times S})^{-1}\otimes \mathcal{L}(dq_1)|_{{\{q_2}\}\times S}, d).
    \]
    Our goal is to show that under the identification 
    \[
    (\on{Pic}^0(X)\times B\mathbb{G}_m\times \mathbb{Z})^{\vee}\cong \on{Pic}^0(X)\times B\mathbb{G}_m\times \mathbb{Z}
    \]
    in Example \ref{example: picard stack}, the dual map 
    \[
    (\rho_2\circ\rho_1^{-1})^{\vee}: \on{Pic}^0(X)\times B\mathbb{G}_m\times \mathbb{Z}\to \on{Pic}^0(X)\times B\mathbb{G}_m\times \mathbb{Z}
    \] 
    is the inverse of $\rho_2\circ\rho_1^{-1}$. 
    This desired statement can be deduced from the observation that the map
    \[
    \mathbb{Z}\to \on{Pic}^0(X)
    \]
    \[
    d\mapsto \mathcal{O}_X(dq_1-dq_2)
    \]
    is dual to the map 
    \[
    \on{Pic}^0(X) \to B\mathbb{G}_m
    \]
    \[
    \mathcal{L}\mapsto\mathcal{L}|_{{\{q_1}\}\times S}\otimes (\mathcal{L}|_{{\{q_2}\}\times S})^{-1}.
    \]
\end{proof}

Now we consider a commutative group stack $\mathcal{P}$ over $\mathcal{B}$ together with a $\mu_n$-gerbe $\alpha: \mathfrak{P}\to \mathcal{P}$ on it. We assume this gerbe $\alpha$ is \'etale locally trivial over $\mathcal{B}$. We say $\alpha$ admits a group structure if it can be put into a short exact sequence of commutative group stacks as follows:
\begin{equation}\label{eq: gerbe exact sequence}
0\xrightarrow{} B\mu_n\xrightarrow{} \mathfrak{P}\xrightarrow{\alpha}\mathcal{P}\xrightarrow{} 0.
\end{equation}
We associate with such a $\mu_n$-gerbe $\alpha$ the following scheme $\widetilde{\on{Split}}^c(\mathcal{P}, \alpha)$ which is closely related to the principal component of relative splittings defined in (\ref{eq; definition of spitting}). 
\begin{definition}\label{def: splitting commutative}
(a) We define $\widetilde{\on{Split}}_{\mu_n}^c(\mathcal{P}, \alpha)$ to be the stack that classifies 1-morphisms of group stacks between $\mathfrak{P}$ and $B\mu_n$ that induce splittings of the short exact sequence (\ref{eq: gerbe exact sequence}). 

(b) We define $\widetilde{\on{Split}}^c(\mathcal{P}, \alpha)$ to be the stack that classifies 1-morphisms of group stacks between $\mathfrak{P}$ and $B\mathbb{G}_m$ that induce splittings of the short exact sequence
\begin{equation}\label{eq: G_m gerbe short exact sequence}
   0\xrightarrow{} B\mathbb{G}_m\xrightarrow{} \mathfrak{P}\times ^{B\mu_n}B\mathbb{G}_m\xrightarrow{}\mathcal{P}\xrightarrow{} 0. 
\end{equation}
This short exact sequence comes from the $\mathbb{G}_m$-gerbe induced from $\alpha$. 
\end{definition}

\begin{remark}\label{remark: commutative splitting}
(a) The stack $\widetilde{\on{Split}}_{\mu_n}^c(\mathcal{P}, \alpha)$ is a torsor for $\mathcal{P}^{\vee}[n]\coloneqq \on{Hom}(\mathcal{P}, B\mu_n)$ and we have an isomorphism $\widetilde{\on{Split}}^c(\mathcal{P}, \alpha)\cong\widetilde{\on{Split}}_{\mu_n}^c(\mathcal{P}, \alpha)\times^{\mathcal{P}^{\vee}[n]}\mathcal{P}^{\vee}$. 

(b) Consider the dual short exact sequence of (\ref{eq: G_m gerbe short exact sequence}):
\[
 0\xrightarrow{} \mathcal{P}^{\vee}\xrightarrow{} (\mathfrak{P}\times ^{B\mu_n}B\mathbb{G}_m)^{\vee}\xrightarrow{\sigma}\mathbb{Z}\xrightarrow{} 0.   
\]
It follows from the definition above that $\widetilde{\on{Split}}^c(\mathcal{P}, \alpha)$ is isomorphic to $\sigma^{-1}(1)$, which is a $\mathcal{P}^{\vee}$-torsor. 
\end{remark}

Now we assume $\mathcal{P}$ is an abelian scheme over $\mathcal{B}$. Let $\mathfrak{P}'$ be a $\mathfrak{P}$-torsor and let $\mathcal{P}'$ be its $\mu_n$-rigidification. The natural map $\alpha': \mathfrak{P}'\longrightarrow \mathcal{P}'$ gives $\mathfrak{P}'$ the structure of a $\mu_n$-gerbe over $\mathcal{P}'$. We associate with $\alpha'$ the principal component $\on{Split}'(\mathcal{P}', \alpha')$ of relative splittings defined in (\ref{eq; definition of spitting}), which is a $\mathcal{P}^{\vee}$-torsor. From $\mathfrak{P}'$ we can construct a commutative group stack $\mathfrak{P}_{\mathbb{Z}}$ with a group morphism $\sigma: \mathfrak{P}_{\mathbb{Z}}\to \mathbb{Z}$ such that $\sigma^{-1}(0)= \mathfrak{P}$, $\sigma^{-1}(1)= \mathfrak{P}'$ and each $\sigma^{-1}(m)$ is a $\mathfrak{P}$-torsor for any $m\in \mathbb{Z}$. The $\mu_n$-rigidification of $\mathfrak{P}_{\mathbb{Z}}$ is a commutative group scheme $\mathcal{P}_{\mathbb{Z}}$ that fits into a short exact sequence
\begin{equation}\label{eq: P-P_Z short exact sequence}
     0\xrightarrow{} \mathcal{P}\xrightarrow{} \mathcal{P}_{\mathbb{Z}}\xrightarrow{\sigma}\mathbb{Z}\xrightarrow{} 0.  
\end{equation}

The commutative group stack $\mathfrak{P}_{\mathbb{Z}}$ forms a $\mu_n$-gerbe over $\mathcal{P}_{\mathbb{Z}}$ which we denote by $\alpha_{\mathbb{Z}}$. Note that the stack of splittings $\widetilde{\on{Split}}_{\mu_n}^c(\mathcal{P}_{\mathbb{Z}}, \alpha_{\mathbb{Z}})$ is a $\mathcal{P}_{\mathbb{Z}}^{\vee}[n]$-torsor, and by the short exact sequence (\ref{eq: P-P_Z short exact sequence}), $\mathcal{P}_{\mathbb{Z}}^{\vee}[n]$ is \'etale locally isomorphic to $\mathcal{P}^{\vee}[n]\times B\mu_n$. It follows that $\widetilde{\on{Split}}_{\mu_n}^c(\mathcal{P}_{\mathbb{Z}}, \alpha_{\mathbb{Z}})$ forms a $\mu_n$-gerbe over its $\mu_n$-rigidification denoted by $\on{Split}_{\mu_n}^c(\mathcal{P}_{\mathbb{Z}}, \alpha_{\mathbb{Z}})$, and $\on{Split}_{\mu_n}^c(\mathcal{P}_{\mathbb{Z}}, \alpha_{\mathbb{Z}})$ is a $\mathcal{P}^{\vee}[n]$-torsor. Similarly $\widetilde{\on{Split}}^c(\mathcal{P}_{\mathbb{Z}}, \alpha_{\mathbb{Z}})$ forms a $\mathbb{G}_m$-gerbe over $\on{Split}^c(\mathcal{P}_{\mathbb{Z}}, \alpha_{\mathbb{Z}})$, and $\on{Split}^c(\mathcal{P}_{\mathbb{Z}}, \alpha_{\mathbb{Z}})$ is a $\mathcal{P}^{\vee}$-torsor. 

From a $\mu_n$-gerbe with a group structure $\alpha: \mathfrak{P}\to \mathcal{P}$ over an abelian scheme $\mathcal{P}$, we've constructed three $\mathcal{P}^{\vee}$-torsors: $\on{Split}'(\mathcal{P}', \alpha')$ in \ref{eq; definition of spitting},  $\widetilde{\on{Split}}^c(\mathcal{P}, \alpha)$ in Definition \ref{def: splitting commutative} (b), and $\on{Split}^c(\mathcal{P}_{\mathbb{Z}}, \alpha_{\mathbb{Z}})$ in the previous paragraph. The following Lemma shows that those three $\mathcal{P}^{\vee}$-torsors are isomorphic to each other. 
\begin{lemma}\label{lemma: compare splitting def}
    There are isomorphisms 
    \[
   \widetilde{\on{Split}}^c(\mathcal{P}, \alpha)\longleftarrow\on{Split}^c(\mathcal{P}_{\mathbb{Z}}, \alpha_{\mathbb{Z}})\longrightarrow \on{Split}'(\mathcal{P}', \alpha')
    \]
of $\mathcal{P}^{\vee}$-torsors.
\end{lemma}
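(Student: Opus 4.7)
\emph{Proof plan.} The plan is to construct both arrows by restriction along $\sigma\colon \mathcal{P}_{\mathbb{Z}}\to \mathbb{Z}$ and then invoke the general principle that a morphism between two $\mathcal{P}^{\vee}$-torsors is automatically an isomorphism; the main work is checking that the restrictions are well-defined after rigidification and land in the correct components.

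First I would define the left arrow $\on{Split}^c(\mathcal{P}_{\mathbb{Z}}, \alpha_{\mathbb{Z}})\to \widetilde{\on{Split}}^c(\mathcal{P}, \alpha)$ as follows. A point of $\widetilde{\on{Split}}^c(\mathcal{P}_{\mathbb{Z}}, \alpha_{\mathbb{Z}})$ is a morphism $s$ of commutative group stacks $\mathfrak{P}_{\mathbb{Z}}\times^{B\mu_n}B\mathbb{G}_m\to B\mathbb{G}_m$ splitting the short exact sequence (\ref{eq: G_m gerbe short exact sequence}) for $\alpha_{\mathbb{Z}}$. Restricting along the identity component $\sigma^{-1}(0)=\mathcal{P}\hookrightarrow \mathcal{P}_{\mathbb{Z}}$ gives a group-morphism splitting of the corresponding short exact sequence for $\alpha$, i.e.\ a point of $\widetilde{\on{Split}}^c(\mathcal{P}, \alpha)$. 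I would then verify that this restriction kills the $\mathbb{G}_m$-automorphisms of $s$: dualising (\ref{eq: P-P_Z short exact sequence}) produces a short exact sequence $0\to B\mathbb{G}_m\to \mathcal{P}_{\mathbb{Z}}^{\vee}\to \mathcal{P}^{\vee}\to 0$, and the $\mathbb{G}_m$-gerbe structure on $\widetilde{\on{Split}}^c(\mathcal{P}_{\mathbb{Z}},\alpha_{\mathbb{Z}})$ over $\on{Split}^c(\mathcal{P}_{\mathbb{Z}},\alpha_{\mathbb{Z}})$ is precisely the translation action by characters $\mathbb{Z}\to B\mathbb{G}_m$, which restricts trivially to $\sigma^{-1}(0)$. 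Equivariance is immediate from the inclusion $\mathcal{P}^{\vee}\hookrightarrow \mathcal{P}_{\mathbb{Z}}^{\vee}$.

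Next I would define the right arrow $\on{Split}^c(\mathcal{P}_{\mathbb{Z}},\alpha_{\mathbb{Z}})\to \on{Split}'(\mathcal{P}',\alpha')$ by restriction of $s$ to the component $\sigma^{-1}(1)=\mathfrak{P}'$. This restriction is a splitting of the $\mathbb{G}_m$-gerbe $\mathfrak{P}'\times^{B\mu_n}B\mathbb{G}_m\to \mathcal{P}'$, hence gives a line bundle on $\mathcal{P}'$ equipped with a $\mu_n$-trivialisation datum; the corresponding class in $\on{Pic}(\mathcal{P}')$ is translation-invariant because $s$ is a morphism of group stacks, so it lies in $\on{Pic}^{\tau}(\mathcal{P}')\cong \mathcal{P}^{\vee}$, which is exactly what it means to lie in the principal component as defined by (\ref{eq; definition of spitting}). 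The same translation-by-$\mathbb{G}_m$-character argument as above shows this factors through $\on{Split}^c(\mathcal{P}_{\mathbb{Z}},\alpha_{\mathbb{Z}})$, and $\mathcal{P}^{\vee}$-equivariance follows from tracing the natural actions through the definitions.

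Since source and target in both arrows are $\mathcal{P}^{\vee}$-torsors and the constructed maps are $\mathcal{P}^{\vee}$-equivariant, they are automatically isomorphisms, which concludes the lemma. The step I expect to be most delicate is confirming that the restriction on the right lands in $\on{Pic}^{\tau}$ rather than a different topological component of $\on{Pic}(\mathcal{P}')$: this forces one to unpack exactly how the extension of structure group from $\on{Pic}(\mathcal{P}')[n]$ to $\on{Pic}^{\tau}(\mathcal{P}')$ in (\ref{eq; definition of spitting}) matches the translation-invariance of the line bundle underlying $s|_{\sigma^{-1}(1)}$, and to verify this in families (i.e.\ over the full base $\mathcal{B}$, not just geometric points).
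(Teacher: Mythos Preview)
Your approach is essentially the same as the paper's: both arrows are obtained by restricting a commutative splitting of $\alpha_{\mathbb{Z}}$ to the components $\sigma^{-1}(0)=\mathcal{P}$ and $\sigma^{-1}(1)=\mathcal{P}'$, and one then uses that any equivariant map between $\mathcal{P}^{\vee}$-torsors is an isomorphism.

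The one notable difference is in how the right arrow is set up. The paper routes it through $\mu_n$-splittings rather than $\mathbb{G}_m$-splittings: restricting to $\sigma^{-1}(1)$ gives a map $\widetilde{\on{Split}}_{\mu_n}^c(\mathcal{P}_{\mathbb{Z}}, \alpha_{\mathbb{Z}})\to \widetilde{\on{Split}}_{\mu_n}(\mathcal{P}', \alpha')$, which after $\mu_n$-rigidification yields $\on{Split}_{\mu_n}^c(\mathcal{P}_{\mathbb{Z}}, \alpha_{\mathbb{Z}})\to \on{Split}_{\mu_n}(\mathcal{P}', \alpha')$, and then one simply extends structure group along $\on{Pic}(\mathcal{P}')[n]\hookrightarrow \on{Pic}^{\tau}(\mathcal{P}')$ on the target and along $\mathcal{P}^{\vee}[n]\hookrightarrow \mathcal{P}^{\vee}$ on the source (using Remark~\ref{remark: commutative splitting}(a)). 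This sidesteps exactly the ``delicate step'' you flagged: because the target $\on{Split}'(\mathcal{P}',\alpha')$ is \emph{defined} via (\ref{eq; definition of spitting}) as the $\on{Pic}^{\tau}$-extension of $\on{Split}_{\mu_n}$, the landing-in-$\on{Pic}^{\tau}$ issue never arises. Your translation-invariance argument for the $\mathbb{G}_m$-route is correct in spirit, but the $\mu_n$-route is cleaner bookkeeping.
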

\begin{proof}
It is enough to construct maps of $\mathcal{P}^{\vee}$-torsors from  $\on{Split}^c(\mathcal{P}_{\mathbb{Z}}, \alpha_{\mathbb{Z}})$ to $\widetilde{\on{Split}}^c(\mathcal{P}, \alpha)$ and $\on{Split}'(\mathcal{P}', \alpha')$. We first construct the map from $\on{Split}^c(\mathcal{P}_{\mathbb{Z}}, \alpha_{\mathbb{Z}})$ to $\widetilde{\on{Split}}^c(\mathcal{P}, \alpha)$. Note that by restricting splittings of the gerbe $\alpha_{\mathbb{Z}}$ to $\sigma^{-1}(0)=\mathcal{P}$, we get a map $\widetilde{\on{Split}}^c(\mathcal{P}_{\mathbb{Z}}, \alpha_{\mathbb{Z}})\to \widetilde{\on{Split}}^c(\mathcal{P}, \alpha)$. The desired morphism comes from the universal property of rigidification. Then we construct the map from $\on{Split}^c(\mathcal{P}_{\mathbb{Z}}, \alpha_{\mathbb{Z}})$ to $\on{Split}'(\mathcal{P}', \alpha')$. By restricting splittings of the gerbe $\alpha_{\mathbb{Z}}$ to $\sigma^{-1}(1)=\mathcal{P}'$, we get a map $\widetilde{\on{Split}}_{\mu_n}^c(\mathcal{P}_{\mathbb{Z}}, \alpha_{\mathbb{Z}})\to \widetilde{\on{Split}}_{\mu_n}(\mathcal{P}', \alpha')$. By universal property of rigidification, we get a map $\on{Split}_{\mu_n}^c(\mathcal{P}_{\mathbb{Z}}, \alpha_{\mathbb{Z}})\to \on{Split}_{\mu_n}(\mathcal{P}', \alpha')$, which leads to the desired morphism. 
\end{proof}

Now we apply the discussion above to the specific case of $\on{SL}_n$- and $\on{PGL}_n$- Higgs bundles described in Section \ref{Subsection: sln vs pgln}. Consider the stack version of the norm map
\[
    \mathfrak{Nm}: \mathfrak{Pic}(\Sigma/\mathcal{A}^0)\longrightarrow \mathfrak{Pic}(X\times \mathcal{A}^0/\mathcal{A}^0).
\] 
Let $\check{\mathfrak{P}}=\mathfrak{Nm}^{-1}(\mathcal{O}_X)$ and let $\check{\mathfrak{P}}_L=\mathfrak{Nm}^{-1}(L)$ for a line bundle $L$ on $X$. Let $\hat{\mathfrak{P}}=\check{\mathfrak{P}}/\Gamma$ and $\hat{\mathfrak{P}}_e=\check{\mathfrak{P}}_{\mathcal{O}_X(eq)}/\Gamma$ where $\Gamma=\on{Pic}^0(X)[n]$. The action of $\on{Pic}(X)$ on $\mathfrak{Pic}(\Sigma/\mathcal{A}^0)$ comes from the isomorphism 
\begin{equation}\label{eq: splitting of picard stack}
     \mathfrak{Pic}(X)\cong \on{Pic}^0(X)\times B\mathbb{G}_m\times \mathbb{Z}
\end{equation}
which depends on the marked point $q\in X(\mathcal{B})$, see Example \ref{example: picard stack}. Note that $\check{\mathfrak{P}}_L$ is a $\check{\mathfrak{P}}$-torsor and $\hat{\mathfrak{P}}_e$ is a $\hat{\mathfrak{P}}$-torsor. The $\mu_n$-gerbes $\check{\alpha}$ and $\hat{\alpha}$ that appear in topological mirror symmetry are given by the natural maps
\[
    {\check{\mathfrak{P}}}_L\xrightarrow{\check{\alpha}}\on{\check{\mathcal{P}}}_L \quad \on{and} \quad {\hat{\mathfrak{P}}}_e\xrightarrow{\hat{\alpha}}\on{\hat{\mathcal{P}}}_e.
\]
Let ${\check{\mathfrak{P}}}_{\mathbb{Z}}\xrightarrow{\check{\sigma}}\mathbb{Z}$ be the $\mathbb{Z}$-family of $\check{\mathfrak{P}}$-torsors constructed from ${\check{\mathfrak{P}}}_{\mathcal{O}_X(dq)}$ such that ${\check{\sigma}}^{-1}(0)=\check{\mathfrak{P}}$ and ${\check{\sigma}}^{-1}(1)={\check{\mathfrak{P}}}_{\mathcal{O}_X(dq)}$. Let ${\hat{\mathfrak{P}}}_{\mathbb{Z}}\xrightarrow{\hat{\sigma}}\mathbb{Z}$ be the $\mathbb{Z}$-family of $\hat{\mathfrak{P}}$-torsors constructed from ${\hat{\mathfrak{P}}}_e$ such that ${\hat{\sigma}}^{-1}(0)=\hat{\mathfrak{P}}$ and ${\hat{\sigma}}^{-1}(1)={\hat{\mathfrak{P}}}_e$. By taking $\mu_n$-rigidification, we get two $\mu_n$-gerbes $\check{\alpha}_{\mathbb{Z}}: {\check{\mathfrak{P}}}_{\mathbb{Z}}\to \check{\mathcal{P}}_{\mathbb{Z}}$ and $\hat{\alpha}_{\mathbb{Z}}: {\hat{\mathfrak{P}}}_{\mathbb{Z}}\to \hat{\mathcal{P}}_{\mathbb{Z}}$. The following theorem describes the duality relation between the two $\mu_n$-gerbes $\check{\alpha}$ and $\hat{\alpha}$ on the moduli spaces of parabolic $\on{SL}_n$- and $\on{PGL}_n$-Higgs bundles. 
\begin{theorem}\label{thm: gerbe duality stronger version}
With the notations as above, we have the following isomorphisms
\[\widetilde{\on{Split}}^c(\check{\mathcal{P}}_{\mathbb{Z}}/\mathcal{A}^0, \check{\alpha}_{\mathbb{Z}}^e)\cong \hat{\mathfrak{P}}_{e}\times^{B\mu_n, [d]}B\mathbb{G}_m,\quad \widetilde{\on{Split}}^c(\hat{\mathcal{P}}_{\mathbb{Z}}/\mathcal{A}^0, \hat{\alpha}_{\mathbb{Z}}^d)\cong \check{\mathfrak{P}}_{{\mathcal{O}_X(dq)}}\times^{B\mu_n, [e]}B\mathbb{G}_m\]
of $(\check{\mathcal{P}}_{\mathbb{Z}})^{\vee}$ (resp. $(\hat{\mathcal{P}}_{\mathbb{Z}})^{\vee}$)-torsors. Here $[d]: B\mu_n\to B\mathbb{G}_m$ is defined by taking a line bundle to its $d$-th power. 
\end{theorem}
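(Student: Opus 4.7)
The plan is to deduce the theorem from the self-duality of the Picard stack $\mathfrak{Pic}(\Sigma/\mathcal{A}^0)$ (Example \ref{example: picard stack}) together with a careful tracking of $\mathbb{Z}$-gradings and the $[d], [e]$-twists, using the duality theory for commutative group stacks of \cite{Arinkin} and \cite{BB}. The two asserted isomorphisms are exchanged by the formal symmetry $(\check{\,},d) \leftrightarrow (\hat{\,},e)$, since $\check{\mathfrak{P}}_{\mathcal{O}_X(dq)}$ and $\hat{\mathfrak{P}}_e$ are both defined as the $\sigma^{-1}(1)$-components of their respective $\mathbb{Z}$-families; thus it suffices to establish the first. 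By Remark \ref{remark: commutative splitting}(b), this amounts to identifying the $\sigma^{-1}(1)$-component of the dual commutative group stack $(\check{\mathfrak{P}}_{\mathbb{Z}} \times^{B\mu_n, [e]} B\mathbb{G}_m)^{\vee}$ with $\hat{\mathfrak{P}}_e \times^{B\mu_n, [d]} B\mathbb{G}_m$.

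The first step is to establish a duality $\check{\mathfrak{P}}_{\mathbb{Z}}^{\vee} \cong \hat{\mathfrak{P}}_{\mathbb{Z}}'$ of commutative group stacks, for an appropriately indexed $\mathbb{Z}$-family $\hat{\mathfrak{P}}_{\mathbb{Z}}'$ of $\hat{\mathfrak{P}}$-torsors, by applying $\on{Hom}(-, B\mathbb{G}_m)$ to the short exact sequences defining $\check{\mathfrak{P}}_{\mathbb{Z}}$ (as a component of $\ker \mathfrak{Nm}$ together with a $\mathbb{Z}$-extension built from $\check{\mathfrak{P}}_{\mathcal{O}_X(dq)}$). The key input is that $\mathfrak{Pic}(\Sigma/\mathcal{A}^0)$ is self-dual as a commutative group stack (étale-locally via Example \ref{example: picard stack}, globally by the marked-point independence of the isomorphism), and under this self-duality the pullback $p_X^{*}\colon \mathfrak{Pic}(X\times\mathcal{A}^0/\mathcal{A}^0) \to \mathfrak{Pic}(\Sigma/\mathcal{A}^0)$ and the norm map $\mathfrak{Nm}$ are transpose to each other, which is essentially Grothendieck--Serre duality along $p_X$. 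Dualizing the defining sequences then identifies $\check{\mathfrak{P}}_{\mathbb{Z}}^{\vee}$ with a component of the cokernel of $p_X^{*}$, i.e. with a $\mathbb{Z}$-family of $\hat{\mathfrak{P}}$-torsors.

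Next I would account for the $[e]$-twist. Dualizing
\[
0 \to B\mathbb{G}_m \to \check{\mathfrak{P}}_{\mathbb{Z}} \times^{B\mu_n, [e]} B\mathbb{G}_m \to \check{\mathcal{P}}_{\mathbb{Z}} \to 0
\]
and combining with the duality above, the $[e]$-twist on the original side selects the $\hat{\sigma}^{-1}(e)$-component $\hat{\mathfrak{P}}_e$ on the dual side, while the $\mathbb{Z}$-grading of $\check{\mathfrak{P}}_{\mathbb{Z}}$ with step $\mathcal{O}_X(dq)$ corresponds dually to the $[d]$-twist $\times^{B\mu_n, [d]} B\mathbb{G}_m$. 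Extracting the $\sigma^{-1}(1)$-component then yields the desired identification.

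The main obstacle will be the bookkeeping in this final step: verifying that the $[e]$-twist applied to $\check{\mathfrak{P}}_{\mathbb{Z}}$ corresponds under the duality precisely to the combination of the $[d]$-twist on $\hat{\mathfrak{P}}_{\mathbb{Z}}$ together with the shift of the $\mathbb{Z}$-grading that picks out $\hat{\mathfrak{P}}_e$. This compatibility reduces to the symmetry of the Poincaré line bundle on $\on{Pic}^0(\Sigma/\mathcal{A}^0) \times_{\mathcal{A}^0} \on{Pic}^0(\Sigma/\mathcal{A}^0)$, combined with the self-duality between $B\mu_n$ and $\mathbb{Z}_n$ from Example \ref{ex: commutative group stacks}(3)--(4) and the behaviour of the $n$-th power map under Cartier duality. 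Making this combinatorics precise is where the real content of the argument lies.
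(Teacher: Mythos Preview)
Your outline is correct and matches the paper's approach: reduce via Remark~\ref{remark: commutative splitting}(b), then exploit self-duality of $\mathfrak{Pic}(\Sigma/\mathcal{A}^0)$ together with the fact that $\mathfrak{Nm}$ and $p_X^{*}$ are transpose. The paper carries out the bookkeeping you anticipate by exhibiting $\check{\mathfrak{P}}_{\mathbb{Z}}\times^{B\mu_n}B\mathbb{G}_m$ as the kernel of $\underline{\mathfrak{Nm}}\colon \mathfrak{Pic}^{d\mathbb{Z}}(\Sigma/\mathcal{A}^0)\to \on{Pic}^0(X\times\mathcal{A}^0/\mathcal{A}^0)$, so that the $[d]$-twist on the dual side falls out transparently from dualizing $0\to\mathfrak{Pic}^{d\mathbb{Z}}\to\mathfrak{Pic}\to\mathbb{Z}/d\mathbb{Z}\to 0$; it also opts to take $\sigma^{-1}(e)$ of the untwisted dual rather than $\sigma^{-1}(1)$ of the $[e]$-twisted one, which is equivalent to your formulation. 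One small caveat: your intermediate statement $\check{\mathfrak{P}}_{\mathbb{Z}}^{\vee}\cong\hat{\mathfrak{P}}_{\mathbb{Z}}'$ is not literally a $\mathbb{Z}$-family (dualizing a $\mu_n$-gerbe yields a $\mathbb{Z}_n$-index), but this is harmless once you pass to the induced $\mathbb{G}_m$-gerbe as you do anyway.
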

\begin{proof}
We start with the first isomorphism. The $\mathbb{G}_m$-gerbe induced from $\check{\alpha}_{\mathbb{Z}}$ lies in the following short exact sequence of commutative group stacks:
\[
0\xrightarrow{}B\mathbb{G}_m\xrightarrow{}\check{\mathfrak{P}}_{\mathbb{Z}}\times^{B\mu_n} B\mathbb{G}_m\xrightarrow{}\check{\mathcal{P}}_{\mathbb{Z}}\xrightarrow{}0.
\]
The dual short exact sequence is given by 
\[
0\xrightarrow{}\check{\mathcal{P}}_{\mathbb{Z}}^{\vee}\xrightarrow{}(\check{\mathfrak{P}}_{\mathbb{Z}}\times^{B\mu_n} B\mathbb{G}_m)^{\vee}\xrightarrow{\sigma}\mathbb{Z}\xrightarrow{}0.
\]
It follows from Remark \ref{remark: commutative splitting} that $\widetilde{\on{Split}}^c(\check{\mathcal{P}}_{\mathbb{Z}}/\mathcal{A}^0, \check{\alpha}_{\mathbb{Z}}^e)\cong\sigma^{-1}(e)$. Note that the commutative group stack $\check{\mathfrak{P}}_{\mathbb{Z}}\times^{B\mu_n} B\mathbb{G}_m$ lies in the short exact sequence
\begin{equation}\label{eq: defining ses for check p_Z}
   0\xrightarrow{}\check{\mathfrak{P}}_{\mathbb{Z}}\times^{B\mu_n} B\mathbb{G}_m\xrightarrow{}\mathfrak{Pic}^{d\mathbb{Z}}(\Sigma/\mathcal{A}^0)\xrightarrow{\underline{\mathfrak{Nm}}}\on{Pic}^0(X\times\mathcal{A}^0/\mathcal{A}^0)\xrightarrow{}0,  
\end{equation}
where $\mathfrak{Pic}^{d\mathbb{Z}}(\Sigma/\mathcal{A}^0)\coloneqq \displaystyle\coprod_{m\in \mathbb{Z}}\mathfrak{Pic}^{dm}(\Sigma/\mathcal{A}^0)$, and $\underline{\mathfrak{Nm}}$ is the composite of ${\mathfrak{Nm}}$ with the projection $\mathfrak{Pic}(X)\to \on{Pic}^0(X)$ in the isomorphism (\ref{eq: splitting of picard stack}). Note that $\mathfrak{Pic}^{d\mathbb{Z}}(\Sigma/\mathcal{A}^0)$ lies in the following short exact sequence
\[
0\xrightarrow{}\mathfrak{Pic}^{d\mathbb{Z}}(\Sigma/\mathcal{A}^0)\xrightarrow{}\mathfrak{Pic}(\Sigma/\mathcal{A}^0)\xrightarrow{\on{deg}}\mathbb{Z}/d\mathbb{Z}\to 0,  
\]
and the dual short exact sequence is given by 
\begin{equation}\label{eq: degree dZ line bundles}
 0\xrightarrow{}B\mu_d\xrightarrow{}\mathfrak{Pic}(\Sigma/\mathcal{A}^0)\xrightarrow{}\mathfrak{Pic}^{d\mathbb{Z}}(\Sigma/\mathcal{A}^0)^{\vee}\to 0   
\end{equation}
as the commutative group stack $\mathfrak{Pic}(\Sigma/\mathcal{A}^0)$ is self dual. We also have the short exact sequence
\[
0\to B{\mu_d}\to B\mathbb{G}_m\xrightarrow[]{[d]} B\mathbb{G}_m\to 0.
\]
It follows that 
\begin{equation}\label{eq: description of dual of Pic(dZ)}
    \mathfrak{Pic}^{d\mathbb{Z}}(\Sigma/\mathcal{A}^0)^{\vee}\cong \mathfrak{Pic}(\Sigma/\mathcal{A}^0)\times ^{B\mathbb{G}_m, [d]}B\mathbb{G}_m.
\end{equation}
Taking the dual short exact sequence of (\ref{eq: defining ses for check p_Z}), we get 
\[ 0\xrightarrow{}\on{Pic}^0(X\times\mathcal{A}^0/\mathcal{A}^0)\xrightarrow{}\mathfrak{Pic}^{d\mathbb{Z}}(\Sigma/\mathcal{A}^0)^{\vee}\xrightarrow{}(\check{\mathfrak{P}}_{\mathbb{Z}}\times^{B\mu_n} B\mathbb{G}_m)^{\vee}\xrightarrow{}0.
\]
Here the morphism $\on{Pic}^0(X\times\mathcal{A}^0/\mathcal{A}^0)\xrightarrow{}\mathfrak{Pic}^{d\mathbb{Z}}(\Sigma/\mathcal{A}^0)^{\vee}$ is given by the composite
\[
\on{Pic}^0(X\times\mathcal{A}^0/\mathcal{A}^0)\xrightarrow[]{\iota}\mathfrak{Pic}(X\times\mathcal{A}^0/\mathcal{A}^0)\xrightarrow[]{p_X^*}\mathfrak{Pic}(\Sigma/\mathcal{A}^0)\xrightarrow{\pi}\mathfrak{Pic}^{d\mathbb{Z}}(\Sigma/\mathcal{A}^0)^{\vee},
\]
where $\iota$ is inclusion from (\ref{eq: splitting of picard stack}), $p_X^*$ is pull-back along the projection $p_X:\Sigma\to X$ and $\pi$ is the map in (\ref{eq: degree dZ line bundles}). Combining with (\ref{eq: description of dual of Pic(dZ)}), we get \[(\check{\mathfrak{P}}_{\mathbb{Z}}\times^{B\mu_n} B\mathbb{G}_m)^{\vee}\cong (\mathfrak{Pic}(\Sigma/\mathcal{A}^0)/\on{Pic}^0(X))\times ^{B\mathbb{G}_m, [d]}B\mathbb{G}_m,
\]
and the map $(\check{\mathfrak{P}}_{\mathbb{Z}}\times^{B\mu_n} B\mathbb{G}_m)^{\vee}\xrightarrow[]{\sigma}\mathbb{Z}$ is given by the degree map of $\mathfrak{Pic}(\Sigma/\mathcal{A}^0)$. It follows that we have the following isomorphisms of $(\check{\mathcal{P}}_{\mathbb{Z}})^{\vee}$-torsors
\[
  \widetilde{\on{Split}}^c(\check{\mathcal{P}}_{\mathbb{Z}}/\mathcal{A}^0, \check{\alpha}_{\mathbb{Z}}^e)\cong\sigma^{-1}(e)\cong (\mathfrak{Pic}^e(\Sigma/\mathcal{A}^0)/\on{Pic}^0(X))\times ^{B\mathbb{G}_m, [d]}B\mathbb{G}_m\cong \hat{\mathfrak{P}}_{e}\times^{B\mu_n, [d]}B\mathbb{G}_m. 
\]

Now we turn to the second isomorphism. The $\mathbb{G}_m$-gerbe induced from $\hat{\alpha}_{\mathbb{Z}}$ lies in the following short exact sequence
\[
    0\xrightarrow{}B\mathbb{G}_m\xrightarrow{}\hat{\mathfrak{P}}_{\mathbb{Z}}\times^{B\mu_n}B\mathbb{G}_m\xrightarrow{}\hat{\mathcal{P}}_{\mathbb{Z}}\xrightarrow{}0.
\]
The dual short exact sequence is given by
\[
  0\xrightarrow{}\hat{\mathcal{P}}_{\mathbb{Z}}^{\vee}\xrightarrow{}(\hat{\mathfrak{P}}_{\mathbb{Z}}\times^{B\mu_n}B\mathbb{G}_m)^{\vee}\xrightarrow{\tau}\mathbb{Z}\xrightarrow{}0
\]
and we have  $\widetilde{\on{Split}}^c(\hat{\mathcal{P}}_{\mathbb{Z}}/\mathcal{A}^0, \hat{\alpha}_{\mathbb{Z}}^d)\cong\tau^{-1}(d)$. Note that $\hat{\mathfrak{P}}_{\mathbb{Z}}\times^{B\mu_n}B\mathbb{G}_m$ lies in the following short exact sequence
\[ 0\xrightarrow{}\on{Pic}^0(X\times\mathcal{A}^0/\mathcal{A}^0)\xrightarrow{}\mathfrak{Pic}^{e\mathbb{Z}}(\Sigma/\mathcal{A}^0)\xrightarrow{}\hat{\mathfrak{P}}_{\mathbb{Z}}\times^{B\mu_n}B\mathbb{G}_m\xrightarrow{}0,
\]
and the dual short exact sequence is given by 
\[
0\xrightarrow{}(\hat{\mathfrak{P}}_{\mathbb{Z}}\times^{B\mu_n}B\mathbb{G}_m)^{\vee}\xrightarrow{}\mathfrak{Pic}^{e\mathbb{Z}}(\Sigma/\mathcal{A}^0)^{\vee}\xrightarrow{\eta}\on{Pic}^0(X\times\mathcal{A}^0/\mathcal{A}^0)\xrightarrow{}0. 
\]
By (\ref{eq: description of dual of Pic(dZ)}), we have $\mathfrak{Pic}^{e\mathbb{Z}}(\Sigma/\mathcal{A}^0)^{\vee}\cong \mathfrak{Pic}(\Sigma/\mathcal{A}^0)\times^{B\mathbb{G}_m, [e]} B\mathbb{G}_m$. The map $\eta$ is the composite
\[
\mathfrak{Pic}(\Sigma/\mathcal{A}^0)\times^{B\mathbb{G}_m, [e]} B\mathbb{G}_m\to \on{Pic}(\Sigma/\mathcal{A}^0)\xrightarrow[]{\on{Nm}}\on{Pic}(X\times \mathcal{A}^0/\mathcal{A}^0)\xrightarrow[]{}\on{Pic}^0(X\times \mathcal{A}^0/\mathcal{A}^0),
\]
where the first morphism is the rigidification map and the third morphism maps $L$ to $L\otimes \mathcal{O}_X(-\on{deg}(L)q)$. It follows that we have isomorphisms of $(\hat{\mathcal{P}}_{\mathbb{Z}})^{\vee}$-torsors
\[
  \widetilde{\on{Split}}^c(\hat{\mathcal{P}}_{\mathbb{Z}}/\mathcal{A}^0, \hat{\alpha}_{\mathbb{Z}}^d)\cong\tau^{-1}(d)\cong \check{\mathfrak{P}}_{\mathcal{O}(dq)}\times^{B\mu_n, [e]}B\mathbb{G}_m. 
\]
\end{proof}

\begin{proof}[Proof of Proposition \ref{prop: dual Hitchin system}]
 We first note that the moduli stack $\mathcal{M}^{e, \boldsymbol{\alpha}_2}_{\on{PGL}_n}$ is admissible in the sense of Definition \ref{def: admissible stack}. This follows from the isomorphism $\mathcal{M}^{e, \boldsymbol{\alpha}_2}_{\on{PGL}_n}\cong [\mathcal{M}^{L_e, \boldsymbol{\alpha}_2}_{\on{SL}_n}/\Gamma]$ in (\ref{eq: sln vs pgln}) and the observation that $\Gamma$ acts freely on $(\mathcal{M}^{L_e, \boldsymbol{\alpha}_2}_{\on{SL}_n})^0$, which is a consequence of Proposition \ref{prop: picard injective}. Properness of the Hitchin morphisms $\check{h}$ and $\hat{h}$ follows from Theorem \ref{thm: Hitchin map proper}. Torsor structures over $\mathcal{A}^0$ and Definition \ref{def: dual Hitchin fibration} Part (a) follows from discussions in Section \ref{Subsection: sln vs pgln}. Part (b) follows from Theorem \ref{thm: gerbe duality stronger version} and Lemma \ref{lemma: compare splitting def}. For part (c), let $a\in \mathcal{A}(\mathcal{O}_F)\cap \mathcal{A}^0(F)$ with corresponding $a_F\in \mathcal{A}^0(F)$. We need to show that if both Hitchin fibers  $\check{h}^{-1}(a_F)=(\check{\mathcal{P}}_{\mathcal{O}(d'q)})_{a_F}$ and $\hat{h}^{-1}(a_F)=(\hat{\mathcal{P}}_{e'})_{a_F}$ have $F$-rational points, then both $\mathbb{G}_m$-gerbes induced from $\check{\alpha}^{e'}$ and $\hat{\alpha}^{d'}$ split over $a_F$. For the $\on{SL}_n$-side, note that since the Hitchin map $\check{h}$ is proper, an $F$-point of the Hitchin fiber $\hat{h}^{-1}(a_F)$ extends to an $\mathcal{O}_F$-point. The desired splitting of the $B\mathbb{G}_m$-gerbe induced from $\check{\alpha}^{e'}|_{\check{h}^{-1}(a_F)}$ follows from Lemma 6.5 in \cite{MWZ}. Now we turn to the $\on{PGL}_n$-side. Since the $B\mathbb{G}_m$-gerbe induced from $\check{\alpha}^{e'}|_{\check{h}^{-1}(a_F)}$ splits, an $F$-point in the Hitchin fiber $(\check{\mathcal{P}}_{\mathcal{O}(d'q)})_{a_F}$ lifts to an $F$-point in $(\check{\mathfrak{P}}_{\mathcal{O}(d'q)}\times ^{B\mu_n, [e']}B\mathbb{G}_m)_{a_F}$, which in turn leads to a splitting of the $B\mathbb{G}_m$-gerbe induced from $\hat{\alpha}^{d'}|_{\hat{h}^{-1}(a_F)}$ by Theorem \ref{thm: gerbe duality stronger version}. 
\end{proof}

\begin{remark}
When $m = (1, 1, . . . , 1)$, i.e. we consider complete flags at the marked point, the family of curves $\Sigma/\mathcal{A}^0$ admits a global section $\tilde{q}\in \Sigma(\mathcal{A}^0)$ that lies above the marked point $q$. The existence of this global section implies that $\check{\mathfrak{P}}_{\mathcal{O}(dq)}$ is isomorphic to $\check{\mathcal{P}}\times B\mu_n$. By Theorem \ref{thm: gerbe duality stronger version}, this further implies that the $B\mathbb{G}_m$-gerbe induced from $\hat{\alpha}^{d'}$ splits over $(\mathcal{M}^{e, \boldsymbol{\alpha}_2}_{\on{PGL}_n})^0=\mathcal{M}^{e, \boldsymbol{\alpha}_2}_{\on{PGL}_n}\times _{\mathcal{A}}\mathcal{A}^0$. It follows that the gerbe $\hat{\alpha}^{d'}$ does not affect the computation of twisted $E$-polynomial, and the equality of $E$-polynomials in Theorem \ref{thm: main theorem} becomes
\[
 E(\mathcal{M}^{L, \boldsymbol{\alpha}_1}_{\on{SL}_n}; u,v)=E_{st}(\mathcal{M}^{e, \boldsymbol{\alpha}_2}_{\on{PGL}_n}; u,v).
\]
This agrees with the observation in \cite{GO} that the gerbe on the $\on{PGL}_n$-side is not needed to formulate the topological mirror symmetry for parabolic Higgs bundles with complete flags. 
\end{remark}

\section{Algebraic Volume forms on moduli spaces of parabolic Higgs bundles}\label{Section: Volume forms}
\subsection{Existence of algebraic gauge forms} \label{subsection: existence of volume forms}
Let $X$ be a smooth projective curve over a field $k$ with a marked point $q\in X(k)$. Let $M$ be a line bundle on $X$. In this subsection, we denote by $\mathcal{M}_L\subseteq\mathcal{M}_{\on{SL}_n}^{L, \boldsymbol{\alpha}}$ the \emph{stable} locus in the moduli space of \emph{semistable} $M$-twisted parabolic $\on{SL}_n$-Higgs bundles with parabolic weights $\boldsymbol{\alpha}$ and determinant $L$. For generic parabolic weights $\alpha$, we have $\mathcal{M}_L=\mathcal{M}_{\on{SL}_n}^{L, \boldsymbol{\alpha}}$. Recall that $\Gamma=\on{Pic}(X)[n]$ acts on $\mathcal{M}_L$ via tensor product. 

\begin{prop}\label{prop: trializing section exists}
    The moduli space $\mathcal{M}_L$ admits a $\Gamma$-invariant gauge form. 
\end{prop}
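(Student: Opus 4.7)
The plan is to realize an open dense subset of $\mathcal{M}_L$ as a quotient of the cotangent bundle $T^*\mathcal{N}$ of an auxiliary smooth moduli space $\mathcal{N}$ of parabolic bundles equipped with a level structure, and to obtain $\omega$ by descending the top exterior power of the canonical symplectic form on $T^*\mathcal{N}$. The $\Gamma$-invariance will then follow from naturality of the symplectic form.

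Concretely, since $\deg M(q) \geq 2g-1$, the line bundle $M(q) \otimes K^{-1}$ has non-negative degree and admits effective divisors. I would fix a sufficiently general effective divisor $D$ on $X$ with $\mathcal{O}_X(D) \cong M(q) \otimes K^{-1}$, and let $\mathcal{N}$ be the moduli space of tuples $(E, E_q^\bullet, \tau)$ where $(E, E_q^\bullet)$ is a stable parabolic bundle of rank $n$ with $\det E \cong L$, and $\tau$ is a trivialization of $E|_D$ compatible with the isomorphism $\det E|_D \cong L|_D$. The forgetful morphism $\mathcal{N} \to \mathcal{M}^{L,\boldsymbol{\alpha}}$ to the moduli of parabolic bundles with determinant $L$ is a principal $H$-bundle, where $H$ is the group scheme $\ker(\det : \on{GL}_n(\mathcal{O}_D) \to \mathcal{O}_D^\times)$. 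Standard deformation theory applied to the parabolic endomorphism sheaf twisted by $-D$, combined with Serre duality, identifies the cotangent fiber $T^*_{(E, E_q^\bullet, \tau)}\mathcal{N}$ with the space of strongly parabolic trace-zero Higgs fields $\phi : E \to E \otimes K(D) = E \otimes M(q)$. This yields a natural morphism $T^*\mathcal{N} \to \mathcal{M}_L$ whose fibers over an open dense subset $\mathcal{V} \subseteq \mathcal{M}_L$ are $H$-torsors.

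Let $\omega_{\on{sym}}$ denote the canonical symplectic form on $T^*\mathcal{N}$, so that $\omega_{\on{sym}}^N/N!$, with $2N = \dim T^*\mathcal{N}$, is a nowhere vanishing top form. Fixing a translation-invariant top form $\omega_H$ on $H$, the ratio $\omega_{\on{sym}}^N/\pi^*\omega_H$ (where $\pi : T^*\mathcal{N} \to \mathcal{M}_L$ denotes the forgetful map) descends, by the standard yoga of relative volume forms on principal bundles, to a gauge form $\omega_\mathcal{V}$ on $\mathcal{V}$. To extend $\omega_\mathcal{V}$ across the locus in $\mathcal{M}_L$ where the chosen level structure degenerates, I would vary $D$ within the linear system $|M(q)K^{-1}|$; the open subsets $\mathcal{V}_D$ attached to different choices of $D$ cover all of $\mathcal{M}_L$, and the corresponding gauge forms should agree up to a global sign on overlaps by a naturality argument, letting us glue them into a single $\omega$. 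The $\Gamma$-invariance then follows because the action of $\gamma \in \Gamma$ on $\mathcal{M}_L$, sending $E \mapsto E \otimes \gamma$, lifts \'etale-locally to $\mathcal{N}$ after a choice of trivialization of $\gamma|_D$, and the canonical symplectic form on $T^*\mathcal{N}$ is invariant under any such lift.

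The main obstacle will be the descent and gluing step: checking that $\omega_{\on{sym}}^N/\pi^*\omega_H$ genuinely descends to a form on $\mathcal{V}$ (which requires $H$-invariance of the numerator and a careful analysis of the $H$-action on $T^*\mathcal{N}$), and then verifying that the forms built from different divisors $D$ patch into a single nowhere-vanishing top form on all of $\mathcal{M}_L$. A related technical point is the case $M = K$, where $\mathcal{O}(D) \cong \mathcal{O}(q)$ forces $D$ to be chosen either equal to $q$ (with the level structure then interpreted as a refinement of the parabolic structure) or in the linear equivalence class of effective divisors disjoint from $q$, and the argument needs minor adjustment.
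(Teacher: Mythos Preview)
Your overall strategy---realize the moduli space using the cotangent of an auxiliary moduli of bundles with level structure, take the top power of the symplectic form, and descend by dividing out a Haar form on the level group---matches the paper's. But there is a genuine gap in your coverage argument.

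You take $\mathcal{N}$ to be the moduli of \emph{stable} parabolic bundles (with a $D$-level structure), so the image of $T^*\mathcal{N}\to\mathcal{M}_L$ is exactly the open set $\mathcal{V}\subset\mathcal{M}_L$ of stable Higgs bundles whose underlying parabolic bundle is stable. You then propose to cover the complement by varying $D$ in the linear system $|M(q)K^{-1}|$. This does not work: the image $\mathcal{V}$ depends only on the stability of the underlying parabolic bundle $(E,E_q^\bullet)$, which is completely insensitive to the choice of $D$. So the open sets $\mathcal{V}_D$ are all equal, and you never reach the locus where $(E,E_q^\bullet)$ is unstable. There is no a priori reason for this locus to have codimension $\ge 2$, so a Hartogs-type extension is not available either. (A secondary point: with a $D$-level structure the cotangent fiber is $H^0(\mathcal{SP}ar_0(E)\otimes K(D+q))$, so you want $D\in|MK^{-1}|$, not $|M(q)K^{-1}|$.)

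The paper circumvents this by working with the moduli \emph{stack} $\mathfrak{N}$ of quasi-parabolic bundles with level structure (no stability imposed), so that the cotangent stack $T^*\mathfrak{N}$ surjects onto all of $\mathfrak{M}$ once one restricts to the maximal Deligne--Mumford locus, which contains the stable Higgs locus. The paper also uses a two-divisor presentation $M=K(D_1-D_2)$: one first descends from $T^*\mathfrak{N}$ (an $\on{SL}_n(D_1)$-torsor) to the larger stack $\mathfrak{M}_{D_1}$ of $K(D_1)$-twisted parabolic Higgs bundles, and then restricts along the closed immersion $\mathfrak{M}\hookrightarrow\mathfrak{M}_{D_1}$, trivializing the resulting determinant line via a second level structure at $D_2$. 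Both the descent step and this trivialization rely on Rosenlicht's theorem (every invertible regular function on $\on{SL}_n(k[x]/(x^a))$ is constant), which is the precise mechanism making ``$\omega_{\mathrm{sym}}^N/\pi^*\omega_H$ descends'' rigorous---something your sketch gestures at but does not justify.
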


The proof of Proposition \ref{prop: trializing section exists} will occupy the rest of this subsection. We denote by ${\mathfrak{M}}$ the moduli \emph{stack} of stable $M$-twisted parabolic $\on{SL}_n$-Higgs bundles with parabolic weights $\boldsymbol{\alpha}$ and determinant $L$. Since ${\mathfrak{M}}$ is a $\mu_n$-gerbe over $\mathcal{M}$, proving Proposition \ref{prop: trializing section exists} is equivalent to showing that $\Omega_{{\mathfrak{M}}}^{\on{top}}$ admits a $\Gamma$-invariant trivializing section. 

We fix a presentation $M=K(D_1-D_2)$, where $D_1$ and $D_2$ are effective Weil divisors disjoint from $q$. We define the following stacks which will play an important role in the proof of Proposition \ref{prop: trializing section exists}.

\begin{definition}
(a) We denote by $\mathfrak{M}_{D_1}$ the moduli stack of $K(D_1)$-twisted quasi-parabolic $\on{SL}_n$-Higgs bundles of determinant $L$.

(b) We denote by $\mathfrak{N}$ the moduli stack of quasi-parabolic vector bundles of determinant $L$ together with a $D_1$-level structure. More precisely, if we let $D_1=\sum a_ip_i$, the moduli stack $\mathfrak{N}$ classifies vector bundles on $X$ of determinant $L$ together with a partial flag structure at $q$ and a trivialization on the $a_i$-th formal neighborhood of $p_i$ for each $i$.
\end{definition}
\begin{lemma}
 Both $\mathfrak{M}_{D_1}$ and $\mathfrak{N}$ are algebraic stacks locally of finite type over $k$.    
\end{lemma}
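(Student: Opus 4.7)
The plan is to build both stacks by successive algebraicity-preserving operations starting from the moduli stack $\on{Bun}_n$ of rank $n$ vector bundles on $X$, which is classically known to be algebraic and locally of finite type over $k$. Three basic facts will be used throughout: imposing a fixed determinant $L$ cuts out a closed substack $\on{Bun}_n^L \subset \on{Bun}_n$; adding a partial flag structure of type $\boldsymbol{m}$ at a single $k$-point $q$ corresponds to pulling back the flag variety $\on{GL}_n/P \longrightarrow B\on{GL}_n$ along the fiber-at-$q$ morphism, which is representable, smooth, and projective; and adding level or Higgs data is expressible as a torsor or a linear stack over the base, both operations preserving algebraicity and the property of being locally of finite type.

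For $\mathfrak{N}$, I would first apply the determinant and partial-flag constructions to produce the intermediate stack $\mathfrak{Q}$ of quasi-parabolic vector bundles of determinant $L$. Then a $D_1$-level structure on an $S$-family $\mathcal{E}$ is a trivialization of $\mathcal{E}|_{D_1 \times S}$ compatible with the given isomorphism $\det \mathcal{E} \cong L$; the functor of such trivializations is a torsor on $\mathfrak{Q}$ under the smooth affine group scheme $\on{Res}_{D_1/k}\on{SL}_n$ of finite type over $k$. Consequently the morphism $\mathfrak{N} \longrightarrow \mathfrak{Q}$ is representable, smooth, and of finite type, so $\mathfrak{N}$ is algebraic and locally of finite type over $k$.

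For $\mathfrak{M}_{D_1}$, I would instead add Higgs data on top of $\mathfrak{Q}$. For $(\mathcal{E}, \mathcal{E}_q^{\bullet}) \in \mathfrak{Q}(S)$ with structure morphism $p_S: X \times S \to S$, the functor sending an $S$-scheme $T$ to the set of trace-free $\mathcal{O}_{X_T}$-linear maps $\mathcal{E}_T \longrightarrow \mathcal{E}_T \otimes K(D_1)$ is representable by the linear $S$-space associated to $p_{S*}(\on{End}_0(\mathcal{E}) \otimes K(D_1))$, via the standard ``scheme of sections'' construction for a coherent sheaf along a projective morphism. The resulting stack of triples (bundle, flag, Higgs field) is algebraic over $\mathfrak{Q}$ and locally of finite type. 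Finally, the nilpotency requirement $\phi_q(\mathcal{E}_q^i) \subset \mathcal{E}_q^{i-1}$ at the marked point is a fiberwise linear-algebra condition on the universal Higgs field, so it cuts out $\mathfrak{M}_{D_1}$ as a closed substack.

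The only step that deserves genuine care is the representability of the Higgs-field functor over $\mathfrak{Q}$, which I would handle once and for all by invoking the linear-space formalism rather than trying to exhibit an explicit smooth atlas; the remaining manipulations each fall into one of the three routine categories listed at the outset.
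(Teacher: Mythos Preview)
Your argument is correct and follows essentially the same route as the paper: both start from the algebraic stack $\on{Bun}_n$ and observe that the forgetful maps $\mathfrak{M}_{D_1}\to\on{Bun}_n$ and $\mathfrak{N}\to\on{Bun}_n$ are representable and locally of finite presentation. The paper simply asserts this representability in one sentence, whereas you unpack it by factoring through the intermediate stack $\mathfrak{Q}$ and identifying each added datum (determinant, flag, level structure, Higgs field, nilpotency) as a closed immersion, a flag-variety pullback, a torsor under a smooth affine group, or a linear space; this extra detail is helpful but not a genuinely different strategy. One small slip: with the paper's conventions an $M$-twisted Higgs field is a map $E\to E\otimes M(q)$, so for $M=K(D_1)$ the Higgs field lands in $\mathcal{E}\otimes K(D_1+q)$ rather than $\mathcal{E}\otimes K(D_1)$, but this does not affect your representability argument.
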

\begin{proof}
Let $\on{Bun}_n$ be the moduli stack of vector bundles of rank $n$ on $X$. We consider the forgetful maps $\mathfrak{M}_{D_1}\to \on{Bun}_n$ and $\mathfrak{N}\to \on{Bun}_n$. We note that those forgetful maps are representable and locally of finite presentation. Since $\on{Bun}_n$ is an algebraic stack locally of finite type over $k$, both $\mathfrak{M}_{D_1}$ and $\mathfrak{N}$ are algebraic stacks locally of finite type over $k$. 
\end{proof}

\begin{remark}\label{rk: various moduli stacks for constructing gauge forms}
(a) The cotangent stack $T^{*}\mathfrak{N}$ classifies $K(D_1)$-twisted quasi-parabolic $\on{SL}_n$-Higgs bundles of determinant $L$ together with a $D_1$-level structure. By forgetting this $D_1$-level structure, we get a natural map $\pi_{D_1}: T^{*}\mathfrak{N}
\longrightarrow \mathfrak{M}_{D_1}$, which gives $T^{*}\mathfrak{N}$ the structure of a $\on{SL}_n(D_1)$-torsor over $\mathfrak{M}_{D_1}$. By Cohen's structure theorem, we get an isomorphism
\[
    \on{SL}_n(D_1)\cong \prod_i\on{SL}_n(k_i[x]/(x^{a_i})),
\]
where $k_i$ is the residue field of $p_i$. 

(b) There is a natural inclusion map $\iota: \mathfrak{M}\longrightarrow \mathfrak{M}_{D_1}$ induced by the inclusion
\[\Gamma(X, \mathcal{E}nd_0(E)(D_1-D_2))\subseteq \Gamma(X, \mathcal{E}nd_0(E)(D_1)),\]
of Higgs fields, where $\mathcal{E}nd_0(E)$ is the sheaf of trace zero endomorphisms of $E$. 

(c) Each stack $\mathfrak{M}_{D_1}$ and $T^{*}\mathfrak{N}$ admits a $\Gamma$-action that is defined similarly as the $\Gamma$-action on $\mathfrak{M}$. Both maps $\pi_{D_1}: T^{*}\mathfrak{N}
\longrightarrow \mathfrak{M}_{D_1}$ and $\iota: \mathfrak{M}\longrightarrow \mathfrak{M}_{D_1}$ are $\Gamma$-equivariant with respect to those $\Gamma$-actions. 
\end{remark}

Now we study the deformation of $T^{*}\mathfrak{N}$ at $\boldsymbol{E}=(E, \phi, E_q^{\bullet}, \varphi, \theta)$, where $(E, \phi, E_q^{\bullet}, \varphi)$ is a $K(D_1)$-twisted quasi-parabolic $\on{SL}_n$-Higgs bundle and $\theta$ is a $D_1$-level structure of $E$. We denote by $\mathcal{P}ar_0(E)$ the sheaf of trace zero endomorphisms of $E$ that map $E_q^i$ to $E_q^i$ at $q$, and by $\mathcal{SP}ar_0(E)$ the sheaf of trace zero endomorphisms that map $E_q^i$ to $E_q^{i-1}$ at $q$. The deformation of $\boldsymbol{E}$ in $T^{*}\mathfrak{N}$ is governed by the complex
\begin{equation}\label{eq: tangent complex}
    \mathscr{F}^{\bullet}_{\boldsymbol{E}}=[\mathcal{P}ar_0(E)\otimes \mathcal{O}_X(-D_1)\xrightarrow{[\_, \phi]} \mathcal{SP}ar_0(E)\otimes K(D_1+q)]
\end{equation}
sitting at degree $-1$ and degree $0$: the hypercohomology $\mathbb{H}^0( \mathscr{F}^{\bullet}_{\boldsymbol{E}})$ gives the deformation of $\boldsymbol{E}$, and the hypercohomology $\mathbb{H}^{-1}( \mathscr{F}^{\bullet}_{\boldsymbol{E}})$ gives the lie algebra of the automorphism group of $\boldsymbol{E}$. 
The killing form on $\mathfrak{sl}_n$ induces an isomorphism $\mathcal{P}ar_0(E)\cong (\mathcal{SP}ar_0(E)\otimes \mathcal{O}_X(q))^{*}$. Under this isomorphism, the dual complex $(\mathscr{F}^{\bullet}_{\boldsymbol{E}})^{\vee}$ of $\mathscr{F}^{\bullet}_{\boldsymbol{E}}$ is given by
\[
(\mathscr{F}^{\bullet}_{\boldsymbol{E}})^{\vee}\simeq[\mathcal{P}ar_0(E)\otimes K^{-1}(-D_1)\xrightarrow{-[\_, \phi]} \mathcal{SP}ar_0(E)\otimes \mathcal{O}_X(D_1+q)]
\]
sitting at degree $0$ and degree $1$. We identify $(\mathscr{F}^{\bullet}_{\boldsymbol{E}})^{\vee}[1]\otimes K$ with $\mathscr{F}^{\bullet}_{\boldsymbol{E}}$ using the following isomorphism
\bd
\xymatrix{
 \mathscr{F}^{\bullet}_{\boldsymbol{E}}=[\mathcal{P}ar_0(E)\otimes \mathcal{O}_X(-D_1)\ar[r]^{[\_, \phi]}\ar[d]^{-1} &[\mathcal{SP}ar_0(E)\otimes K(D_1+q)]\ar[d]^{\on{id}}\\
(\mathscr{F}^{\bullet}_{\boldsymbol{E}})^{\vee}[1]\otimes K\simeq[\mathcal{P}ar_0(E)\otimes \mathcal{O}_X(-D_1)\ar[r]^{-[\_, \phi]}& [\mathcal{SP}ar_0(E)\otimes K(D_1+q)].
}
\ed
By Serre duality, we have an isomorphism $\mathbb{H}^{-1}( \mathscr{F}^{\bullet}_{\boldsymbol{E}})\cong \mathbb{H}^1( \mathscr{F}^{\bullet}_{\boldsymbol{E}})^{*}$. We denote by $(T^{*}\mathfrak{N})'\subset T^{*}\mathfrak{N}$ the maximal open Deligne-Mumford substack. We note that $\mathbb{H}^{-1}( \mathscr{F}^{\bullet}_{\boldsymbol{E}})=0$ for any $\boldsymbol{E}\in (T^{*}\mathfrak{N})'$ since the automorphism group is finite. The isomorphism $\mathbb{H}^1( \mathscr{F}^{\bullet}_{\boldsymbol{E}})\cong \mathbb{H}^{-1}( \mathscr{F}^{\bullet}_{\boldsymbol{E}})^{*}=0$ implies $(T^{*}\mathfrak{N})'$ is smooth. Indeed, smoothness of $(T^{*}\mathfrak{N})'$ can be checked by proving the lifting property for small extensions of finite-generated Artinian local $k$-algebras, and the obstruction for the existence of such liftings lies in $\mathbb{H}^1( \mathscr{F}^{\bullet}_{\boldsymbol{E}})$. The other isomorphism $\mathbb{H}^0( \mathscr{F}^{\bullet}_{\boldsymbol{E}})\cong \mathbb{H}^0( \mathscr{F}^{\bullet}_{\boldsymbol{E}})^{*}$ given by Serre duality induces a symplectic form on $(T^{*}\mathfrak{N})'$. This symplectic form is invariant under the action of $\Gamma$ and $\on{SL}_n(D_1)$. We denote by $\widetilde{\omega}_{D_1}$ the top exterior product of this symplectic form, which gives a $\Gamma$-invariant trivializing section of $\Omega^{\on{top}}_{(T^{*}\mathfrak{N})'}$. 

The goal is to construct a $\Gamma$-invariant trivializing section of $\Omega^{\on{top}}_{\mathfrak{M}}$ from $\widetilde{\omega}_{D_1}$. Our construction relies on the following theorem of M. Rosenlicht.
\begin{theorem}[cf. \cite{R} Theorem 3]\label{thm: Rosenlicht}
Let $G$ be a connected algebraic group over $k$ with identity element $e$. Let $f: G\longrightarrow \mathbb{G}_m$ be a map of $k$-schemes such that $f(e)=1$. Then $f$ is a character. 
\end{theorem}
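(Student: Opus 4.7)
The strategy is to encode the failure of $f$ to be a homomorphism as a single morphism $G\times G \to \mathbb{G}_m$ and to trivialise it using the fact that units on a product are essentially tensor products of units. After base-changing to $\bar k$ (being a morphism of group schemes can be checked faithfully flat-locally, and connectedness of $G$ is preserved), I set
\[
\phi : G \times G \longrightarrow \mathbb{G}_m, \qquad \phi(x,y) \;=\; f(xy)\cdot f(x)^{-1}\cdot f(y)^{-1}.
\]
The assertion that $f$ is a character is equivalent to $\phi\equiv 1$. Because $f(e)=1$, we immediately have $\phi(e,y) = f(y)\cdot 1\cdot f(y)^{-1} = 1$ and symmetrically $\phi(x,e)=1$, so $\phi$ is an invertible regular function on the geometrically integral variety $G\times G$ that is identically $1$ on the two coordinate axes $\{e\}\times G$ and $G\times\{e\}$.

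\textbf{Main step.} The key input is Rosenlicht's lemma on units of a product: if $X$ and $Y$ are geometrically integral $k$-varieties (with $k$ algebraically closed) each carrying a $k$-point, the natural homomorphism
\[
\mathcal{O}(X)^{*}/k^{*}\;\oplus\;\mathcal{O}(Y)^{*}/k^{*}\;\longrightarrow\;\mathcal{O}(X\times Y)^{*}/k^{*},\qquad (a,b)\mapsto\bigl((x,y)\mapsto a(x)b(y)\bigr),
\]
is an isomorphism. Applying this with $X=Y=G$ to the unit $\phi$, we obtain a factorisation $\phi(x,y)=c\cdot a(x)\cdot b(y)$ for some $a,b\in\mathcal{O}(G)^{*}$ and $c\in k^{*}$. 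The axis condition $\phi(x,e)\equiv 1$ forces $c\cdot a(x)\cdot b(e)\equiv 1$, so $a$ is constant; symmetrically $b$ is constant. Evaluating at $(e,e)$ pins down $c\cdot a(e) b(e)=1$, and hence $\phi\equiv 1$, which is what we wanted.

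\textbf{Main obstacle.} The real content of the proof is concentrated in the product unit lemma above, which I would establish in two stages. The first stage is Rosenlicht's unit theorem: for a normal geometrically connected variety $Z$ over an algebraically closed field, $\mathcal{O}(Z)^{*}/k^{*}$ is a finitely generated free abelian group. This is proved by choosing a normal projective compactification $\bar Z$ and identifying $\mathcal{O}(Z)^{*}/k^{*}$ with the kernel of the class map from the free abelian group on the irreducible components of $\bar Z\setminus Z$ to the N\'eron--Severi group of $\bar Z$. The second stage uses this: given a unit $u$ on $X\times Y$, the assignment $x\mapsto [y\mapsto u(x,y)u(x_0,y)^{-1}]\in\mathcal{O}(Y)^{*}/k^{*}$ is a morphism from the connected variety $X$ to a finitely generated discrete abelian group, hence is constant. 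This forces $u(x,y)/u(x_0,y)$ to depend only on $x$ up to a scalar, and a symmetric argument produces the desired decomposition. The genuinely hard point in the entire scheme is the finite generation of $\mathcal{O}(Z)^{*}/k^{*}$; everything downstream of it --- including the passage from the product lemma back to Rosenlicht's character theorem above --- is essentially formal cocycle bookkeeping on the two axes of $G\times G$.
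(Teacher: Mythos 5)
The paper does not prove this statement at all: it is imported verbatim from Rosenlicht \cite{R}, Theorem 3, and its only role here is to feed Corollary \ref{cor: constant functions}. So there is no in-paper proof to compare against; what you have written is, in substance, Rosenlicht's own argument, and it is correct in outline. The reduction to showing that $\phi(x,y)=f(xy)f(x)^{-1}f(y)^{-1}$ is identically $1$, the passage to $\bar k$ (with the identity $\phi=1$ descending by faithful flatness), and the use of the product-unit lemma together with the two axis conditions $\phi(x,e)=\phi(e,y)=1$ are all sound. Two points deserve attention. First, the lemma on units of a product requires $G$ to be geometrically integral; this is where the (necessary) implicit hypothesis that $G$ is a group \emph{variety} enters --- for a non-reduced connected group scheme such as $\alpha_p$ in characteristic $p$, the unit $1+x$ satisfies $f(e)=1$ but is not a character, so the statement as literally phrased needs $G$ smooth, which holds for all groups to which the paper applies it ($\on{SL}_n$ of Artinian local $k$-algebras and their quotients). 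Second, your deduction that $x\mapsto [y\mapsto u(x,y)u(x_0,y)^{-1}]\in\mathcal{O}(Y)^{*}/k^{*}$ is constant is stated too quickly: this is a set-theoretic map to an abstract finitely generated group, and ``connected source, discrete target'' is not by itself an argument. One must show the map is Zariski-locally constant, e.g.\ by working at the generic point of $X$ and proving $\mathcal{O}(Y_K)^{*}/K^{*}\cong\mathcal{O}(Y_{\bar k})^{*}/\bar k^{*}$ for $K=k(X)$, or by tracking the divisor of $u(x,\cdot)$ on a normal compactification $\bar Y$ and checking that it varies constructibly in $x$. With that step filled in, your proof is complete and coincides with the classical one; given that the paper treats the result as a black box, citing \cite{R} (or a modern reference for the unit theorem) would be equally acceptable.
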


\begin{corollary}\label{cor: constant functions}
The only invertible regular functions on $\on{SL}_n(k[x]/(x^a))$ are constant functions.
\end{corollary}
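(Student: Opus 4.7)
My plan is to realize $\on{SL}_n(k[x]/(x^a))$ as a connected affine algebraic group over $k$, apply Theorem \ref{thm: Rosenlicht} to reduce the statement to the triviality of characters, and then exploit the semidirect product structure of the group.

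First, I would identify the scheme $\on{SL}_n(k[x]/(x^a))$ with the Weil restriction $G = \on{Res}_{k[x]/(x^a)/k} \on{SL}_n$, which is an affine algebraic group over $k$ whose $k$-points recover $\on{SL}_n(k[x]/(x^a))$. The key geometric fact about $G$ is that the reduction-mod-$x$ map fits in a short exact sequence of algebraic $k$-groups
\[
1 \longrightarrow U \longrightarrow G \longrightarrow \on{SL}_n \longrightarrow 1,
\]
where $U$ is the kernel. Writing an element of $k[x]/(x^a)$ as $1 + x u_1 + x^2 u_2 + \cdots + x^{a-1} u_{a-1}$, one sees that $U$ admits a filtration by normal subgroups with successive quotients isomorphic to vector groups $\mathfrak{sl}_n \otimes_k \mathbb{G}_a$; in particular $U$ is a connected unipotent group. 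Since $\on{SL}_n$ is connected and $U$ is connected, $G$ is connected.

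Now let $f$ be an invertible regular function on $G$. Replacing $f$ by $f/f(e)$, I may assume $f(e) = 1$, so by Theorem \ref{thm: Rosenlicht} the map $f: G \to \mathbb{G}_m$ is an algebraic group homomorphism, i.e.\ a character of $G$. It then suffices to show that $G$ has no non-trivial characters. The restriction $f|_U: U \to \mathbb{G}_m$ is a character of a connected unipotent group, and any algebraic homomorphism $\mathbb{G}_a \to \mathbb{G}_m$ is trivial (because a unit in $k[t]$ is a nonzero constant), so by induction along the filtration of $U$ the character $f|_U$ is trivial. Consequently $f$ descends to a character of $\on{SL}_n$. Since $\on{SL}_n$ is semisimple simply connected with trivial character group, $f$ is trivial, and hence the original function is the constant $f(e)$.

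The argument is essentially formal once the algebraic group structure is in place; the only mildly subtle point is verifying that $U$ is connected unipotent, but this follows immediately from the explicit matrix description of $U$ as successive extensions of vector groups obtained from the $x$-adic filtration on $k[x]/(x^a)$.
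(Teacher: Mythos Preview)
Your proof is correct and follows essentially the same approach as the paper: both use the short exact sequence $1 \to U \to \on{SL}_n(k[x]/(x^a)) \to \on{SL}_n \to 1$, apply Rosenlicht's theorem to reduce to showing the group has no nontrivial characters, and conclude by noting that $U$ is unipotent and $\on{SL}_n$ is semisimple. The only cosmetic differences are that the paper argues unipotence of $U$ directly via $(u-I)^a=0$ rather than via a filtration by vector groups, and that you explicitly verify connectedness of $G$ (which the paper tacitly assumes).
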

\begin{proof}
We consider the short exact sequence 
\[
    0\longrightarrow U\longrightarrow \on{SL}_n(k[x]/(x^a))\xrightarrow{\pi} \on{SL}_n(k)\longrightarrow 0,
\]
where $\pi$ is induced by the quotient map $k[x]/(x^a)\longrightarrow k$. We note that the kernel $U$ is a unipotent group over $k$. Indeed, for any $u\in U$, $u-I$ is a matrix with entries in the ideal $xk[x]/(x^a)$, therefore $(u-I)^a=0$. 
It follows that there are no non-trivial characters on $U$. It follows from \ref{thm: Rosenlicht} that all invertible regular functions on $\on{SL}_n(k[x]/(x^a))$ come from regular functions on $\on{SL}_n(k)$. Since $\on{SL}_n(k)$ is semi-simple, there are no non-trivial characters on $\on{SL}_n(k)$. Again it follows from Theorem \ref{thm: Rosenlicht} that there are no non-constant invertible regular functions on $\on{SL}_n(k)$.
\end{proof}
\begin{proof}[Proof of Proposition \ref{prop: trializing section exists}]
\emph{Step 1.} We denote by $\mathfrak{M}_{D_1}'\subset \mathfrak{M}_{D_1}$ the maximal Deligne-Mumford substack. The first step is to construct a trivializing section of $\Omega^{\on{top}}_{\mathfrak{M}_{D_1}'}$ from the trivializing section $\widetilde{\omega}_{D_1}$ of $\Omega^{\on{top}}_{(T^{*}\mathfrak{N})'}$. Recall that $T^{*}\mathfrak{N}$ is a $\on{SL}_n(D_1)$-torsor over $\mathfrak{M}_{D_1}$. Let $\mathcal{T}_{T^{*}\mathfrak{N}/\mathfrak{M}_{D_1}}$ be the relative tangent sheaf. The $\on{SL}_n(D_1)$-action on $T^{*}\mathfrak{N}$ induces a map 
\[\rho: \mathfrak{sl}_n(D_1)\longrightarrow \mathcal{T}_{T^{*}\mathfrak{N}/\mathfrak{M}_{D_1}}.
\] 
We fix a non-zero vector $\tau\in\wedge^{\on{top}}\mathfrak{sl}_n(D_1)$ in the top exterior product of the Lie algebra $\mathfrak{sl}_n(D_1)$. It follows from Corollary \ref{cor: constant functions} that the polyvector field $\rho(\tau)\in \wedge^{\on{top}}\mathcal{T}_{T^{*}\mathfrak{N}/\mathfrak{M}_{D_1}}$ is $\on{SL}_n(D_1)$-invariant. It is also $\Gamma$-invariant since the $\Gamma$-action on $T^{*}\mathfrak{N}$ commutes with the $\on{SL}_n(D_1)$-action. Now we consider the contraction $\iota_{\rho(\tau)}\widetilde{\omega}_{D_1}$ of the top-degree form $\widetilde{\omega}_{D_1}$ with $\rho(\tau)$. By definition, $\iota_{\rho(\tau)}\widetilde{\omega}_{D_1}$ is characterized by the property that $<{\displaystyle \iota _{\rho(\tau)}\widetilde{\omega}_{D_1} ,Y>=<\widetilde{\omega}_{D_1}, \rho(\tau)\wedge Y}>$ for any polyvector field $Y$ of the correct degree. Recall that $T^{*}\mathfrak{N}$ is a $\on{SL}_n(D_1)$-torsor over $\mathfrak{M}_{D_1}$, see Remark \ref{rk: various moduli stacks for constructing gauge forms} (a). The differential form
$\iota_{\rho(\tau)}\widetilde{\omega}_{D_1}$ is both horizontal and $\on{SL}_n(D_1)$-invariant, therefore descends to a top-degree form $\omega_{D_1}$ on $\mathfrak{M}_{D_1}'$, which is a $\Gamma$-invariant trivializing section. 

\emph{Step 2.} In this step we construct a trivializing section of $\Omega^{\on{top}}_{\mathfrak{M}}$ from the top-degree form $\omega_{D_1}$ on $\mathfrak{M}_{D_1}'$. Recall there is a natural inclusion map $\iota: \mathfrak{M}\longrightarrow \mathfrak{M}_{D_1}$. The image lies in the maximal Deligne-Mumford substack $\mathfrak{M}_{D_1}'\subset \mathfrak{M}_{D_1}$ since $\mathfrak{M}$ classifies stable Higgs bundles. Let $\boldsymbol{E}=(E, \phi, E_q^{\bullet}, \varphi)$ be a point of $\mathfrak{M}$. The deformation of $\boldsymbol{E}$ in $\mathfrak{M}$ is governed by
\[
  \mathscr{G}^{\bullet}_{\boldsymbol{E}}=[\mathcal{P}ar_0(E)\xrightarrow{[\_, \phi]} \mathcal{SP}ar_0(E)\otimes K(D_1-D_2+q)]
\]
sitting at degree $-1$ and degree $0$. The deformation of $\boldsymbol{E}$ in $\mathfrak{M}_{D_1}$ is governed by
\[
   \mathscr{H}^{\bullet}_{\boldsymbol{E}}=[\mathcal{P}ar_0(E)\xrightarrow{[\_, \phi]} \mathcal{SP}ar_0(E)\otimes K(D_1+q)]
\]
sitting at degree $-1$ and degree $0$.
We note that when restricted to $\mathfrak{M}$, the hypercohomology for both complexes are zero at degree $-1$ and $1$. Indeed, $\mathbb{H}^{-1}(\mathscr{G}^{\bullet}_{\boldsymbol{E}})$ (resp. $\mathbb{H}^{-1}(\mathscr{H}^{\bullet}_{\boldsymbol{E}})$) gives the lie algebra of the automorphism group for $\boldsymbol{E}$ in $\mathfrak{M}$ (resp. $\mathfrak{M}_{D_1})$. We have $\mathbb{H}^{-1}(\mathscr{G}^{\bullet}_{\boldsymbol{E}})=\mathbb{H}^{-1}(\mathscr{H}^{\bullet}_{\boldsymbol{E}})=0$ since $\boldsymbol{E}$ lies in the maximal Deligne-Mumford locus. The fact that $\mathbb{H}^{1}(\mathscr{H}^{\bullet}_{\boldsymbol{E}})=0$ follows from $\mathbb{H}^{1}(\mathscr{F}^{\bullet}_{\boldsymbol{E}})=0$ for the complex $\mathscr{F}^{\bullet}_{\boldsymbol{E}}$ in (\ref{eq: tangent complex}) and the long exact sequence for hypercohomology. The fact that $\mathbb{H}^{1}(\mathscr{G}^{\bullet}_{\boldsymbol{E}})=0$ follows from computation of the dimension of $\mathfrak{M}$ and $\mathfrak{M}_{D_1}$. It follows from the long exact sequence for hypercohomology that we have an isomorphism
\begin{equation}\label{eq: top form higgs}
    \iota^{*}\Omega^{\on{top}}_{\mathfrak{M}'_{D_1}}\cong\Omega^{\on{top}}_{\mathfrak{M}}\otimes \wedge^{\on{top}} \on{End}_0(E_{D_2}),
\end{equation}
where $\on{End}_0(E_{D_2})$ is the locally free sheaf on $\mathfrak{M}$ for which the fiber at $\boldsymbol{E}$ consists of trace zero endomorphisms of $E_{D_2}$. 

Now we consider the $\on{SL}_n(D_2)$-torsor $\pi: \widetilde{\mathfrak{M}}\longrightarrow \mathfrak{M}$ defined by adding a $D_2$-level structure. More precisely, $\widetilde{\mathfrak{M}}$ classifies quintuples $\boldsymbol{E}=(E, \phi, E_q^{\bullet}, \varphi, \theta)$, where $(E, \phi, E_q^{\bullet}, \varphi)$ is an element of $\mathfrak{M}$ and $\theta$ is a $D_2$-level structure of $E$. The $D_2$-level structure induces a canonical trivialization 
\begin{equation}\label{eq: trivilization of EndD2}
    \pi^{*}\on{End}_0(E_{D_2})\cong \mathcal{O}_{\widetilde{\mathfrak{M}}}\otimes_k \mathfrak{sl}_n(D_2).
\end{equation}
We fix a non-zero vector $\sigma\in\wedge^{\on{top}}\mathfrak{sl}_n(D_2)$, which induces a trivialization of $\pi^{*}(\wedge^{\on{top}}\on{End}_0(E_{D_2}))$ through isomorphism (\ref{eq: trivilization of EndD2}). 
It follows from Corollary \ref{cor: constant functions} that this trivialization of $\pi^{*}(\wedge^{\on{top}}\on{End}_0(E_{D_2}))$ descends to a trivialization
\[
    \wedge^{\on{top}}\on{End}_0(E_{D_2})\cong \mathcal{O}_{\mathfrak{M}}.
\]
By isomorphism (\ref{eq: top form higgs}), the trivializing section $\omega_{D_1}$ of $\Omega^{\on{top}}_{\mathfrak{M}'_{D_1}}$ induces a trivializing section $\omega$ of $\Omega^{\on{top}}_{\mathfrak{M}}$ which is still $\Gamma$-invariant. 
\end{proof}

\subsection{Comparison between degrees}\label{subsection: comparison between degrees}
Recall that we denote by $\mathcal{M}_{L}$ the moduli space of \emph{stable} $M$-twisted parabolic $\on{SL}_n$-Higgs bundles with parabolic weights $\boldsymbol{\alpha}$ and determinant $L$. In Subsection \ref{subsection: existence of volume forms}, we have constructed an algebraic gauge form $\omega_{L}$ on $\mathcal{M}_{L}$. The goal of this subsection is to discuss the relation between this $\omega_L$ for different choices of the line bundle $L$. 

The moduli space $\mathcal{M}_{L}$ admits the Hitchin map $\check{h}: \mathcal{M}_{L}\longrightarrow \mathcal{A}$ such that when restricted to the open subscheme $\mathcal{A}^0\subset \mathcal{A}$, $\mathcal{M}^0_{L}\coloneqq \mathcal{M}_L\times _{\mathcal{A}}{\mathcal{A}^0}$ becomes a torsor for the relative Prym scheme $\check{\mathcal{P}}/\mathcal{A}^0$. By the discussion at the end of Subsection \ref{subsection: abstract hitchin systems}, if we fix a trivializing section $\omega_{\mathcal{A}}\in \Gamma(\mathcal{A}, \Omega^{\on{top}}_{\mathcal{A}})$, the top-degree form $\omega_L$ can be written uniquely as a wedge product when restricted to $\mathcal{M}_L^0$: 
\begin{equation}\label{eq: wedge product; comparison of degree}
      \omega_{L}=\check{h}^{*}\omega_{\mathcal{A}}\wedge \tilde{\omega}_L,  
\end{equation}
where $\tilde {\omega}_L\in \Gamma(\mathcal{M}_L^0, \Omega^{\on{top}}_{\mathcal{M}_L^0/\mathcal{A}^0})$ is a translation invariant trivializing section. We denote by $\tilde{\omega}'_L$ the corresponding section in $\Gamma(\check{\mathcal{P}}, \Omega^{\on{top}}_{\check{\mathcal{P}}/\mathcal{A}^0})$, see Lemma \ref{lemma: top form yoga}. Then we have the following proposition. 

\begin{prop}\label{prop: comparision between degrees}
    The section $\tilde{\omega}'_L\in\Gamma(\check{\mathcal{P}}, \Omega^{\on{top}}_{\check{\mathcal{P}}/\mathcal{A}^0})$ does not depend on the choice of the line bundle $L$. 
\end{prop}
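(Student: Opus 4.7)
The plan is to run the construction of $\omega_L$ from Subsection \ref{subsection: existence of volume forms} uniformly in a family as the determinant $L$ varies, and then exploit propriety of $\on{Pic}^d(X)$ together with Lemma \ref{lemma: top form yoga} to deduce that the induced trivializing section of $\check{h}_* \Omega^{\on{top}}_{\check{\mathcal{P}}/\mathcal{A}^0}$ does not see the choice of $L$. The key observation is that each ingredient in the construction — the canonical symplectic form on $T^*\mathfrak{N}$, the generators $\tau \in \wedge^{\on{top}}\mathfrak{sl}_n(D_1)$ and $\sigma \in \wedge^{\on{top}}\mathfrak{sl}_n(D_2)$, and the two invocations of Corollary \ref{cor: constant functions} — involves only Lie-theoretic and divisor data that are insensitive to $L$.

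Concretely, I would fix a degree $d$ and introduce relative versions $\mathfrak{N}^{\bullet}$, $T^*\mathfrak{N}^{\bullet}$, $\mathfrak{M}_{D_1}^{\bullet}$ and $\mathfrak{M}^{\bullet}$ of all the stacks from Subsection \ref{subsection: existence of volume forms}, fibered over $\on{Pic}^d(X)$ with fiber over $L$ the corresponding $L$-labeled stack. Each is smooth over $\on{Pic}^d(X)$ on the appropriate Deligne--Mumford open locus. The recipe then goes through: the canonical relative symplectic form on $T^*\mathfrak{N}^{\bullet}/\on{Pic}^d(X)$ produces a relative top form $\widetilde{\omega}_{D_1}^{\bullet}$; contraction with $\rho(\tau)$ descends to $(\mathfrak{M}_{D_1}^{\bullet})'$; and the $D_2$-level trivialization via $\sigma$ yields a relative gauge form $\omega^{\bullet} \in \Gamma(\mathfrak{M}^{\bullet}, \Omega^{\on{top}}_{\mathfrak{M}^{\bullet}/\on{Pic}^d(X)})$ whose restriction to the fiber over $L$ is $\omega_L$.

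Over $\mathcal{A}^0 \subset \mathcal{A}$, the relative Hitchin fiber $(\mathfrak{M}^{\bullet})^0 \to \mathcal{A}^0 \times \on{Pic}^d(X)$ is a $\check{\mathcal{P}}$-torsor in the $\mathcal{A}^0$ direction, since the Prym scheme depends only on the spectral curve and not on $L$. The decomposition (\ref{eq: wedge product; comparison of degree}) assembles into a relative decomposition and, via Lemma \ref{lemma: top form yoga}, yields a trivializing section $\tilde{\omega}'^{\bullet}$ of the pullback of the line bundle $\check{h}_* \Omega^{\on{top}}_{\check{\mathcal{P}}/\mathcal{A}^0}$ from $\mathcal{A}^0$ to $\mathcal{A}^0 \times \on{Pic}^d(X)$. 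Since $\on{Pic}^d(X)$ is smooth, proper and geometrically connected, the projection $\on{pr}_1 : \mathcal{A}^0 \times \on{Pic}^d(X) \to \mathcal{A}^0$ satisfies $\on{pr}_{1,*}\mathcal{O} = \mathcal{O}_{\mathcal{A}^0}$, so every global section of a pulled-back invertible sheaf is itself pulled back from $\mathcal{A}^0$. Hence $\tilde{\omega}'^{\bullet} = \on{pr}_1^{*}\tilde{\omega}'_{L_0}$ for any chosen $L_0 \in \on{Pic}^d(X)(k)$, proving independence within a fixed degree. To compare across degrees that differ by a multiple of $n$, I would use that $E \mapsto E(q)$ induces an isomorphism $\mathcal{M}_L \cong \mathcal{M}_{L(nq)}$ covering the identity on $\mathcal{A}^0$ and on $\check{\mathcal{P}}$, under which the gauge forms match by functoriality of the construction; the remaining finitely many residue classes modulo $n$ are bridged by invoking the main result of \cite{BB}.

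I expect the main obstacle to be verifying that the two descent arguments in Subsection \ref{subsection: existence of volume forms} — the contraction with $\rho(\tau)$ along the $\on{SL}_n(D_1)$-torsor $T^*\mathfrak{N} \to \mathfrak{M}_{D_1}$, and the trivialization via $\sigma$ along the $\on{SL}_n(D_2)$-torsor $\widetilde{\mathfrak{M}} \to \mathfrak{M}$ — remain valid after base change to $\on{Pic}^d(X)$. Both rely on Corollary \ref{cor: constant functions}, which is an assertion about invertible regular functions on the absolute group $\on{SL}_n(k[x]/(x^a))$; the relative analogue, that invertible regular functions on $\on{SL}_n(R[x]/(x^a))$ are pulled back from $R$, is what one really needs and should reduce to the absolute case by passing to geometric points of $\on{Pic}^d(X)$.
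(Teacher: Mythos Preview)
Your within-degree argument is correct and rather elegant: running the construction of Subsection \ref{subsection: existence of volume forms} in a family over $\on{Pic}^d(X)$ and then using properness of $\on{Pic}^d(X)$ to force the resulting section of $\on{pr}_1^*(\check{h}_*\Omega^{\on{top}}_{\check{\mathcal{P}}/\mathcal{A}^0})$ to be pulled back from $\mathcal{A}^0$ does work, and your diagnosis of the only subtle point (Corollary \ref{cor: constant functions} over a base) is accurate.

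However, this is \emph{not} how the paper proceeds, and the part you defer --- ``the remaining finitely many residue classes modulo $n$ are bridged by invoking the main result of \cite{BB}'' --- is precisely where the paper does the actual work. The paper does not separate into ``same degree'' and ``different degree'' at all. Instead, working over $\bar{k}$, it fixes a closed point $p\in X^0(k)$ and uses the Abel--Jacobi map on the spectral side to produce a morphism
\[
a_p: \widetilde{X}_{(p)}^0\times_{\mathcal{A}_{D_1}^0}(T^*\mathfrak{N}_L)^0 \longrightarrow (T^*\mathfrak{N}_{L(p)})^0,
\]
and proves (Proposition \ref{prop: BB}, the analogue of \cite{BB}, Theorem 4.12, in the $K(D_1)$-twisted parabolic setting with $D_1$-level structure) that $a_p^*\theta_{\mathfrak{N}_{L(p)}}=p_2^*\theta_{\mathfrak{N}_L}$ via a Hecke-correspondence diagram. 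One then checks this identity survives the contraction by $\rho(\tau)$ (using that $a_p$ is $\on{SL}_n(D_1)$-equivariant) and the $D_2$-trivialization (using that $a_p$ modifies the bundle only at $p\notin\on{Supp}(D_2)$), yielding $\bar{a}_p^*\omega_{L(p)}=p_2^*\omega_L$ and hence $\tilde{\omega}'_L=\tilde{\omega}'_{L(p)}$. Iterating the move $L\rightsquigarrow L(\pm p)$ connects any two line bundles over $\bar{k}$, so this single argument handles all $L$ simultaneously.

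The gap in your proposal is that ``invoking \cite{BB}'' is not a black box here: \cite{BB} is stated for ordinary Higgs bundles, and the adaptation to $K(D_1)$-twisted parabolic Higgs bundles with level structure, together with tracking the identity through the two descent steps of Subsection \ref{subsection: existence of volume forms}, is the substance of the proof. Once that adaptation is carried out, your family/properness argument becomes redundant, since the Hecke move already compares $L$ and $L(p)$ of adjacent degrees and hence, by iteration, any two $L$'s. So your approach adds a correct but unnecessary step while leaving the essential one unwritten.
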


It is enough to prove Proportion \ref{prop: comparision between degrees} when the base field $k$ is algebraically closed, therefore we assume $k=\bar{k}$. Recall we denote by $\mathfrak{N}$ the moduli stack of quasi-parabolic vector bundles of determinant $L$ together with a $D_1$-level structure. We will use the notation $\mathfrak{N}_L$ when we want to emphasize the choice of $L$. Let $p_{\mathfrak{N}}: T^{*}\mathfrak{N}\longrightarrow \mathfrak{N}$ be the projection map. The cotangent stack $T^{*}\mathfrak{N}$ admits a tautological $1$-form $\theta_{\mathfrak{N}}$
\[
    \theta_{\mathfrak{N}}=dp_{\mathfrak{N}}\circ \delta: T^{*}\mathfrak{N}\longrightarrow T^{*}(T^{*}\mathfrak{N})
\]
defined by the composition of the differential of $p_{\mathfrak{N}}$
\[
    dp_{\mathfrak{N}}: T^{*}\mathfrak{N}\times_{\mathfrak{N}} T^{*}\mathfrak{N}\longrightarrow T^{*}(T^{*}\mathfrak{N})
\]
with the diagonal map 
\[
    \delta: T^{*}\mathfrak{N}\longrightarrow T^{*}\mathfrak{N}\times_{\mathfrak{N}} T^{*}\mathfrak{N}.
\]
We have the following 
\begin{lemma}
  Over the maximal open Deligne-Mumford locus $(T^{*}\mathfrak{N})'\subset T^{*}\mathfrak{N}$, the 2-form $d\theta_{\mathfrak{N}}$ is equal to the canonical symplectic form defined using Serre duality of the complex in (\ref{eq: tangent complex}).   
\end{lemma}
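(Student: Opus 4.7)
The plan is to verify the equality pointwise on $(T^{*}\mathfrak{N})'$. At $\boldsymbol{E} = (E, \phi, E_q^{\bullet}, \varphi, \theta) \in (T^{*}\mathfrak{N})'$, the vanishing of $\mathbb{H}^{-1}$ and $\mathbb{H}^1$ of $\mathscr{F}^{\bullet}_{\boldsymbol{E}}$ ensures that both $2$-forms restrict to non-degenerate bilinear pairings on the tangent space $T_{\boldsymbol{E}}(T^{*}\mathfrak{N}) \cong \mathbb{H}^0(\mathscr{F}^{\bullet}_{\boldsymbol{E}})$, and it suffices to match them at the level of \v{C}ech cocycles. Choose an affine open cover $\{U_i\}$ of $X$, and represent $v, w \in \mathbb{H}^0(\mathscr{F}^{\bullet}_{\boldsymbol{E}})$ by pairs $(\{v_{ij}\}, \{\psi_i\})$ and $(\{w_{ij}\}, \{\eta_i\})$ with $v_{ij} \in \Gamma(U_{ij}, \mathcal{P}ar_0(E)(-D_1))$, $\psi_i \in \Gamma(U_i, \mathcal{SP}ar_0(E) \otimes K(D_1+q))$, and cocycle relations $\psi_j - \psi_i = [v_{ij}, \phi]$ (and analogously for $w$).

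Next, unpack the tautological form. The image $dp_{\mathfrak{N}}(v) \in T_n \mathfrak{N} \cong H^1(X, \mathcal{P}ar_0(E)(-D_1))$ is represented by the cocycle $\{v_{ij}\}$, so $\theta_{\mathfrak{N}}(v) = \langle \phi, \{v_{ij}\}\rangle_{\mathrm{Ser}}$, where $\langle \_, \_\rangle_{\mathrm{Ser}}$ denotes the Serre-duality pairing between $H^0(\mathcal{SP}ar_0(E) \otimes K(D_1+q))$ and $H^1(\mathcal{P}ar_0(E)(-D_1))$ induced by trace and residue. Extending $v, w$ to local algebraic vector fields $V, W$ near $\boldsymbol{E}$ (possible by smoothness of $(T^{*}\mathfrak{N})'$), the Cartan formula $d\theta_{\mathfrak{N}}(V,W) = V\theta_{\mathfrak{N}}(W) - W\theta_{\mathfrak{N}}(V) - \theta_{\mathfrak{N}}([V,W])$ produces three contributions: $\langle \eta, \{v_{ij}\}\rangle_{\mathrm{Ser}}$ from the first term, $-\langle \psi, \{w_{ij}\}\rangle_{\mathrm{Ser}}$ from the second, and a bracket term $-\langle \phi, [\{v_{ij}\}, \{w_{ij}\}]\rangle$ from the third. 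The bracket term arises because moving along $V$ changes the trivializations used to write the cocycle for $W$, introducing a Lie-bracket correction valued in $\mathcal{P}ar_0$.

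On the other hand, the Serre-duality $2$-form on $\mathbb{H}^0(\mathscr{F}^{\bullet}_{\boldsymbol{E}})$ induced by the self-dual identification $(\mathscr{F}^{\bullet}_{\boldsymbol{E}})^{\vee}[1]\otimes K \simeq \mathscr{F}^{\bullet}_{\boldsymbol{E}}$ displayed in the excerpt admits a standard \v{C}ech expression via the cup product on two-term complexes; direct expansion on the chosen cocycle representatives produces precisely the same three terms, with the bracket contribution arising from pairing the \v{C}ech $1$-cochains $\{v_{ij}\}$ and $\{w_{jk}\}$ through the differential $[\_, \phi]$ of $\mathscr{F}^{\bullet}$. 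The main technical obstacle is the sign bookkeeping: one must carefully track the sign introduced by the left vertical arrow of the self-duality diagram in the excerpt together with the conventions for the \v{C}ech cup product on two-term complexes, in order to verify that all three terms match with the correct signs. Modulo this bookkeeping the comparison is formal, and recovers the classical result of Biswas--Ramanan for ordinary moduli of Higgs bundles; the parabolic and level-structure modifications enter only through the choice of sheaves in the complex and do not alter the structure of the argument.
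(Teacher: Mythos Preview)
Your approach is sound but differs from the paper's. You carry out a direct \v{C}ech--Cartan computation: represent tangent vectors by hypercohomology cocycles, expand $d\theta_{\mathfrak{N}}$ via the Cartan formula, and match the resulting trace--residue expressions against the cup-product formula for the Serre-duality pairing on $\mathbb{H}^0(\mathscr{F}^\bullet)$. This is the Biswas--Ramanan style argument you cite, and it works in the parabolic/level-structure setting for exactly the reason you give: only the sheaves in the complex change. One caution: the attribution of the individual Cartan terms looks transposed (differentiating $\theta(W)=\langle\phi,\{w_{ij}\}\rangle$ along $V$ should produce $\langle\psi,\{w_{ij}\}\rangle$ plus a derivative of $\{w_{ij}\}$, not $\langle\eta,\{v_{ij}\}\rangle$), so the promised sign bookkeeping really is where the content lies and should not be waved away.

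The paper instead argues structurally. The hypercohomology spectral sequence yields a short exact sequence
\[
0 \longrightarrow E_2^{0,0} \longrightarrow \mathbb{H}^0(\mathscr{F}^\bullet_{\boldsymbol{E}}) \longrightarrow E_2^{-1,1} \longrightarrow 0,
\]
and one identifies the inclusion $H^0(\mathcal{F}^0)\twoheadrightarrow E_2^{0,0}\hookrightarrow\mathbb{H}^0$ with the differential of the fibre inclusion $T^*_{\pi(\boldsymbol{E})}\mathfrak{N}\hookrightarrow T^*\mathfrak{N}$, and the projection $\mathbb{H}^0\twoheadrightarrow E_2^{-1,1}\hookrightarrow H^1(\mathcal{F}^{-1})$ with $d\pi$. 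Both $2$-forms then visibly make the cotangent fibre Lagrangian and induce the same pairing between fibre and base (the tautological one on the left, Serre duality between $H^0(\mathcal{F}^0)$ and $H^1(\mathcal{F}^{-1})$ on the right), which forces them to agree. This avoids any cocycle manipulation or sign chase; your approach, by contrast, is more explicit and self-contained but correspondingly more delicate.
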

\begin{proof}
    Let $\boldsymbol{E}$ be a point of $(T^{*}\mathfrak{N})'$. We denote $\mathscr{F}^{\bullet}_{\boldsymbol{E}}=[\mathcal{F}^{-1}\to \mathcal{F}^0]$, where $\mathcal{F}^{-1}=\mathcal{P}ar_0(E)\otimes \mathcal{O}_X(-D_1)$ and  $\mathcal{F}^0=\mathcal{SP}ar_0(E)\otimes K(D_1+q)]$. By the spectral sequence for hypercohomology, we have the following short exact sequence
    \begin{equation}\label{eq: SES for hypercohomology}
      0\to E^{0,0}_2\to  \mathbb{H}^{0}( \mathscr{F}^{\bullet}_{\boldsymbol{E}})\to E^{-1,1}_2 \to 0,
    \end{equation}
    where $E^{0,0}_2=\on{Coker}(H^0(\mathcal{F}^{-1})\to H^0(\mathcal{F}^{0}))$ and $E^{-1,1}_2=\on{Ker}(H^1(\mathcal{F}^{-1})\to H^1(\mathcal{F}^{0}))$. Let $\pi: T^{*}\mathfrak{N}\to \mathfrak{N}$ be the projection map, and let $\iota: T^{*}_{\pi(\boldsymbol{E})}\mathfrak{N}\to T^{*}\mathfrak{N}$ be the inclusion of the cotangent fiber at $\pi(\boldsymbol{E})$. Note that we have $H^1(\mathcal{F}^{-1})\cong T_{\pi(\boldsymbol{E})}\mathfrak{N}$ and $H^0(\mathcal{F}^{0})\cong T^{*}_{\pi(\boldsymbol{E})}\mathfrak{N}$. The map $H^{0}(\mathcal{F}^0)\to E^{0,0}_2\to \mathbb{H}^{0}( \mathscr{F}^{\bullet}_{\boldsymbol{E}})$ in (\ref{eq: SES for hypercohomology}) is given by the tangent map of $\iota$, and the map $\mathbb{H}^{0}( \mathscr{F}^{\bullet}_{\boldsymbol{E}})\to E^{-1,1}_2\to H^{1}(\mathcal{F}^{-1}) $ in (\ref{eq: SES for hypercohomology}) is given by the tangent map of $\pi$. Now the desired statement follows from the definition of the two symplectic forms. 
\end{proof}
Since $T^{*}\mathfrak{N}$ classifies $K(D_1)$-twisted quasi-parabolic $\on{SL}_n$-Higgs bundles together with a $D_1$-level structure, we have the Hitchin map
\[
    h: T^{*}\mathfrak{N}\longrightarrow \mathcal{A}_{D_1}=\bigoplus_{i=2}^n\Gamma(X, K(D_1+q)^i). 
\]
Let $\widetilde{X}$ be the total spectral curve inside $T^{*}X(D_1+q)\times \mathcal{A}_{D_1}$, and let $p_X:\widetilde{X}\longrightarrow X$ be the projection map. We introduce the following notations.
\begin{definition}
(a) We denote by $X^0$ be the open curve defined by
\[
X^0=X-\on{Supp}(D_1)-\on{Supp}(D_2)-q. 
\]
(b) We denote by $\mathcal{A}_{D_1}^0\subset \mathcal{A}_{D_1}$ be the locus where the spectral curves are smooth and connected above $X^0$. Let $(T^{*}\mathfrak{N})^0=T^{*}\mathfrak{N}\times_{\mathcal{A}_{D_1}}\mathcal{A}_{D_1}^0$ and $\widetilde{X}^{0}=p_X^{-1}(X^0)$.
\end{definition}

\begin{remark}
Let $(E, \phi, E^{\bullet}_q)$ be a $K(D_1)$-twisted quasi-parabolic Higgs bundle that is mapped to $a\in \mathcal{A}_{D_1}^0$ under the Hitchin map. Since the spectral curve $\widetilde{X}_a$ is smooth above $X^0$, by the BNR correspondence \cite{BNR}, the spectral sheaf corresponding to $(E, \phi)$ restricts to a line bundle on $\widetilde{X}_a^0$. 
\end{remark}

Now we fix a closed point $p\in X^0(k)$. Let $\widetilde{X}_{(p)}^0$ be the restriction of $\widetilde{X}^0$ along $\mathcal{A}_{D_1}^0\xrightarrow{p\times \on{id}}X^{0}\times\mathcal{A}_{D_1}^0$. The Abel-Jacobi map
\[
    \widetilde{X}^0\times \on{Pic}(\widetilde{X}^0)\longrightarrow \on{Pic}(\widetilde{X}^0)
\]
induces a map
\[
    a_p: \widetilde{X}_{(p)}^0\times_{\mathcal{A}_{D_1}^0}  (T^{*}\mathfrak{N}_{L})^0\longrightarrow (T^{*}\mathfrak{N}_{L(p)})^0. 
\]
by modifying the spectral sheaf above $X^0$. The following proposition is an analogue of Theorem 4.12 in \cite{BB}. 
\begin{prop}\label{prop: BB}
    $a_p^* \theta_{\mathfrak{N}_{L(p)}}=p_2^{*}\theta_{\mathfrak{N}}$, where $p_2: \widetilde{X}_{(p)}^0\times_{\mathcal{A}_{D_1}^0}  (T^{*}\mathfrak{N}_{L})^0\longrightarrow (T^{*}\mathfrak{N}_{L})^0$ is the projection to the second factor. 
\end{prop}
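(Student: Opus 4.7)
The plan is to adapt the non-parabolic statement \cite{BB} Theorem 4.12 to our setting with parabolic structure at $q$ and $D_1$-level structure, exploiting the key fact that the point $p$ is chosen in $X^0$, hence away from the marked point $q$ and from $\on{Supp}(D_1) \cup \on{Supp}(D_2)$. Consequently the Abel--Jacobi-type map $a_p$ is an operation supported in an analytic neighborhood of $p$ and interacts neither with the quasi-parabolic structure at $q$ nor with the $D_1$-level structure.

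First I would describe $\theta_{\mathfrak{N}_L}$ intrinsically on the Deligne--Mumford locus $(T^*\mathfrak{N}_L)'$ via the tangent complex $\mathscr{F}_{\boldsymbol{E}}^{\bullet}$ of (\ref{eq: tangent complex}): given a tangent vector represented by $v \in \mathbb{H}^0(\mathscr{F}_{\boldsymbol{E}}^{\bullet})$, the pairing $\langle \theta_{\mathfrak{N}_L}, v\rangle$ is the natural contraction of $v$ with the class of the Higgs field in $H^0(\mathcal{SP}ar_0(E)\otimes K(D_1+q))$, followed by the projection to $\mathbb{H}^0$ of the shifted dual complex. Next I would spell out the differential of $a_p$ via the BNR correspondence: over $\mathcal{A}_{D_1}^0$, a Higgs bundle in $(T^*\mathfrak{N}_L)^0$ corresponds to a coherent sheaf on the total spectral curve whose restriction to $\widetilde{X}^0$ is a line bundle $N$, and $a_p$ replaces $N$ by $N(p')$ for $p' \in \widetilde{X}_{(p)}^0$. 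A tangent vector on the domain of $a_p$ then decomposes into a vertical piece (moving $p'$ along $\widetilde{X}_{(p)}^0$) and a horizontal piece (a tangent vector to $(T^*\mathfrak{N}_L)^0$).

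The core of the proof is to check two things about this decomposition: (a) on vertical tangents, the pullback $a_p^*\theta_{\mathfrak{N}_{L(p)}}$ vanishes, and (b) on horizontal tangents, it reproduces $\theta_{\mathfrak{N}_L}$. Claim (b) follows from the fact that horizontal tangents correspond to deformations of the spectral line bundle away from $p'$ and that the modified Higgs field agrees with the original one away from $p$; the intrinsic description in step one then identifies the two pairings under the isomorphism of tangent complexes induced by $a_p$. Claim (a) is the Lagrangian property of the Abel--Jacobi embedding $\widetilde{X}^0 \hookrightarrow \on{Pic}(\widetilde{X}^0/\mathcal{A}_{D_1}^0)$ with respect to $d\theta_{\mathfrak{N}_L}$, which is the essential content of \cite{BB} Theorem 4.12 in the classical setting.

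The hard part will be the bookkeeping that justifies the reduction of (a) to the non-parabolic, no-level-structure case of \cite{BB}. Concretely, I would construct an explicit quasi-isomorphism between the relevant summands of $\mathscr{F}_{\boldsymbol{E}}^{\bullet}$ and $a_p^*\mathscr{F}_{a_p(\boldsymbol{E})}^{\bullet}$ near $p$ using the fact that $\mathcal{P}ar_0(E)$ and $\mathcal{SP}ar_0(E)$ are unchanged in an \'etale neighborhood of $p \in X^0$, and that the $D_1$-level structure is preserved by $a_p$ because $p \notin \on{Supp}(D_1)$. Once the verification of (a) is reduced to the formal disc around $p$, invoking the classical Hurtubise--Markman computation (or equivalently \cite{BB} Theorem 4.12) finishes the argument.
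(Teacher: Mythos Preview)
Your vertical/horizontal decomposition does not do what you want. The map $\widetilde{X}_{(p)}^0\to\mathcal{A}_{D_1}^0$ is finite (for each $a$ there are only finitely many points of the spectral curve over $p$), so the projection $p_2$ from the fibre product to $(T^*\mathfrak{N}_L)^0$ is generically \'etale. At a generic point there is therefore \emph{no} nonzero ``vertical'' tangent vector in your sense: moving $p'$ forces a move in $\mathcal{A}_{D_1}^0$ and hence a nonzero component in $(T^*\mathfrak{N}_L)^0$. This makes claim~(a) vacuous on a dense open and pushes the entire content into claim~(b), which you justify only by the heuristic that ``the modified Higgs field agrees with the original one away from $p$''; that sentence does not pin down the differential of $a_p$ on the deformation that simultaneously moves $p'$ and the Higgs bundle. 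Separately, your identification of~(a) with the ``Lagrangian property of the Abel--Jacobi embedding with respect to $d\theta$'' is a genuine confusion: Lagrangian means the pullback of the \emph{two}-form $d\theta$ vanishes, whereas what you need is vanishing of the pullback of the \emph{one}-form $\theta$ itself, a strictly stronger statement.

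The paper avoids all of this with a Hecke correspondence. One introduces the stack $\mathcal{H}ecke^1_p$ of triples $(\boldsymbol{E},\boldsymbol{F},i)$ with $\boldsymbol{E}\in\mathfrak{N}_L$, $\boldsymbol{F}\in\mathfrak{N}_{L(p)}$ and $i:E\hookrightarrow F$ a length-one modification at $p$ (compatible with the parabolic and level structures, which is automatic since $p\in X^0$). With the two projections $q,p$ to $\mathfrak{N}_{L(p)}$ and $\mathfrak{N}_L$, one forms $Z^0 = q^*(T^*\mathfrak{N}_{L(p)})^0 \times_{T^*\mathcal{H}ecke^1_p} p^*(T^*\mathfrak{N}_L)^0$ and identifies $Z^0\cong \widetilde{X}_{(p)}^0\times_{\mathcal{A}_{D_1}^0}(T^*\mathfrak{N}_L)^0$, with $a_p$ and $p_2$ becoming the two structure maps out of the fibre product. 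Then $a_p^*\theta_{\mathfrak{N}_{L(p)}}$ and $p_2^*\theta_{\mathfrak{N}_L}$ are \emph{both} equal to the pullback to $Z^0$ of the tautological $1$-form on $T^*\mathcal{H}ecke^1_p$, by functoriality of the tautological $1$-form under the cotangent maps $dq$, $dp$. This is precisely the mechanism of \cite{BB} Theorem~4.12; the parabolic and level structures play no role because $p$ lies in $X^0$.
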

\begin{proof}
The proof is essentially the same as in \cite{BB}. We consider the  moduli stack $\mathcal{H}ecke^1_p$ of triples \[(\boldsymbol{E}, \boldsymbol{F}, i)\]
where $\boldsymbol{E}\in \mathfrak{N}_L$, $\boldsymbol{F}\in \mathfrak{N}_{L(p)}$ and $i: E\hookrightarrow F$ is an inclusion of the underlying vector bundles such that $F/E$ is the simple skyscraper sheaf at $p\in X^0(k)$ and the partial flag structures and $D_1$-level structures on $\boldsymbol{E}$ and $\boldsymbol{F}$ coincide under $i$. We consider the following maps:
\bd
\xymatrix{
& \mathcal{H}ecke^1_p\ar[dl]_{q} \ar[dr]^{p}\\
\mathfrak{N}_{L(p)} & &\mathfrak{N}_{L}
}
\ed
where $q$ maps the triple to $\boldsymbol{F}$ and $p$ maps the triple to $\boldsymbol{E}$. Both $p$ and $q$ are smooth.

Consider the following pull-back diagram:
\bd
\xymatrix{
Z^0\ar[rr]^{f_1} \ar[d]^{f_2} &&  q^{*}(T^{*}\mathfrak{N}_{L(p)})^0\ar[d]^{dq}\\
p^{*}(T^{*}\mathfrak{N}_{L})^0\ar[rr]^{dp} && T^{*}\mathcal{H}ecke^1_p.
}
\ed
The stack $Z^0$ classifies the following data $((\boldsymbol{E}, \phi_E),(\boldsymbol{F}, \phi_F), i: E\hookrightarrow F)$, where $(\boldsymbol{E}, \phi_E)\in (T^{*}\mathfrak{N}_{L})^0$, $(\boldsymbol{F}, \phi_F)\in (T^{*}\mathfrak{N}_{L(p)})^0$ and $(\boldsymbol{E}, \boldsymbol{F}, i)\in \mathcal{H}ecke_p^1$. Since the twisted Higgs bundles $(E, \phi_E)$ and $(F, \phi_F)$ are isomorphic away from $p$, they map to the same point $a\in \mathcal{A}^0_{D_1}$ under the Hitchin map and the corresponding spectral sheaves on ${\Sigma}_a$ differ by a simple skyscraper sheaf at some $p'\in {\Sigma}_a$ that maps to $p$ under the projection map $p_X: {\Sigma}_a\longrightarrow X$. Recall that $\Sigma_a$ is the normalization of the spectral curve $\widetilde{X}_a$ and they are isomorphic above $X^0$. It follows that $Z^0$ is isomorphic to $ \widetilde{X}_{(p)}^0\times_{\mathcal{A}_{D_1}^0}(T^{*}\mathfrak{N}_L)^0$. The Abel-Jacobi map $a_p$ is identified with
\[
    \on{pr}_2\circ f_1: Z^0\longrightarrow (T^{*}\mathfrak{N}_{L(p)})^0,
\]
where $\on{pr}_2$ is the projection \[q^{*}(T^{*}\mathfrak{N}_{L(p)})^0=\mathcal{H}ecke^1_p\times_{q,\mathfrak{N}_{L(p)}}(T^{*}\mathfrak{N}_{L(p)})^0\longrightarrow (T^{*}\mathfrak{N}_{L(p)})^0.
\]
Similarly $p_2$ is identified with $\on{pr}_2\circ f_2: Z^0\longrightarrow (T^{*}\mathfrak{N}_{L})^0.$ Our goal is to show $a_p^{*}\theta_{\mathfrak{N}_{L(p)}}=p_2^{*}\theta_{\mathfrak{N}_L}$. Both 1-forms are equal to the pull-back of the tautological 1-form on $T^*\mathcal{H}ecke^1_p$ to $Z^0$.
\end{proof}

\begin{proof}[Proof of Proposition \ref{prop: comparision between degrees}]
Recall that $\tilde{\omega}_{D_1}$ is the trivializing section of $\Omega_{(T^{*}\mathfrak{N})'}^{\on{top}}$ obtained by taking the top wedge product of the canonical symplectic form, i.e. we have $\tilde{\omega}_{D_1}=\wedge^{\on{top}}d\theta_{\mathfrak{N}}$. Therefore Proposition \ref{prop: BB} implies \[a_p^{*}\tilde{\omega}_{D_1, L(p)}=p_2^{*}\tilde{\omega}_{D_1, L}.
\]
The stack $T^{*}\mathfrak{N}$ is a $\on{SL}_n(D_1)$-torsor over the moduli stack $\mathfrak{M}_{D_1}$ of $K(D_1)$-twisted quasi-parabolic $\on{SL}_n$-Higgs bundles. Let $\mathfrak{M}_{D_1}^0\subset \mathfrak{M}_{D_1}$ be the quotient of $(T^{*}\mathfrak{N})^0$ by this $\on{SL}_n(D_1)$-action. The gauge form $\omega_{D_1}$ on $\mathfrak{M}^0_{D_1}$ is constructed by contracting $\widetilde{\omega}_{D_1}$ with $\rho(\tau)\in \wedge^{\on{top}}\mathcal{T}_{T^{*}\mathfrak{N}/\mathfrak{M}_{D_1}}$, where $\rho$ is the Lie algebra action
\[\rho: \mathfrak{sl}_n(D_1)\longrightarrow \mathcal{T}_{T^{*}\mathfrak{N}/\mathfrak{M}_{D_1}}
\] 
and $\tau\in\wedge^{\on{top}}\mathfrak{sl}_n(D_1)$ is a non-zero vector. Since $a_p$ is $\on{SL}_n$-equivariant, we have $(a_p)_{*}(\rho(\tau))=\rho(\tau)$. The map $a_p$ descends to 
\[
    \bar{a}_p: \widetilde{X}_{(p)}^0\times_{\mathcal{A}_{D_1}^0}  \mathfrak{M}_{D_1,L}^0\longrightarrow \mathfrak{M}_{D_1,L(p)}^0,
\]
and we have 
\[\
\bar{a}_p^{*}\omega_{D_1,L(p)}=p_2^*\omega_{D_1, L}.
\]

The restriction of $\bar{a}_p$ induces a map
\[
    \bar{a}_p: \widetilde{X}_{(p)}^0\times_{\mathcal{A}^0}  \mathcal{M}_{L}^0\longrightarrow \mathcal{M}_{L(p)}^0,
\]
which we still call $\bar{a}_p$. The gauge form $\omega_L$ (resp. $\omega_{L(p)}$) on $\mathcal{M}^0_L$ (resp. $\mathcal{M}^0_{L(p)}$) is constructed by isomorphism (\ref{eq: top form higgs}) and the trivialization of $\wedge^{\on{top}} \on{End}_0(E_{D_2})$. Since the map $\bar{a}_p$ is defined by modifying the Higgs bundle $(E, \phi)$ at $p\in X^0(k)$, it doesn't affect the fiber $E_{D_2}$ of $E$ at $D_2$. Therefore we have 
\[
    \bar{a}_p^{*}\omega_{L(p)}=p_2^*\omega_{L}.
\]
Combining equation (\ref{eq: wedge product; comparison of degree}), we have
\[
    \bar{a}_p^*(\check{h}^*\omega_{\mathcal{A}}\wedge \widetilde{\omega}_{L(p)})= p_2^*(\check{h}^*\omega_{\mathcal{A}}\wedge \widetilde{\omega}_{L}).
\]
Let $a\in\mathcal{A}^0(k)$. We denote by $(\mathcal{M}^0_{L})_a$ (resp. $(\mathcal{M}^0_{L(p)})_a$) the Hitchin fiber above $a$, which is a $\check{P}_a$-torsor. Let $\tilde{p}\in \widetilde{X}_{(p)}^0(k)$ be a closed point in the spectral curve over $a$. The restriction of $\bar{a}_p$ to $\tilde{p}$ induces an isomorphism of $\check{P}_a$-torsors
\[
    \bar{a}_{\tilde{p}}: (\mathcal{M}^0_{L})_a \longrightarrow (\mathcal{M}^0_{L(p)})_a
\]
that satisfies 
\begin{equation}\label{eq: fiber wise gauge form}
    \bar{a}_{\tilde{p}}^{*}\widetilde{\omega}_{L(p)}=\widetilde{\omega}_{L}. 
\end{equation}
Since $\tilde{\omega}'_L\in\Gamma(\check{\mathcal{P}}, \Omega^{\on{top}}_{\check{\mathcal{P}}/\mathcal{A}^0})$ (resp. $\tilde{\omega}'_{L(p)}$) is obtained from $\widetilde{\omega}_{L}$ (resp. $\widetilde{\omega}_{L(p)}$) by pulling-back along local trivializations of the $\check{P}$-torsor $\mathcal{M}^0_L$ (resp. $\mathcal{M}^0_{L(p)})$, (\ref{eq: fiber wise gauge form}) implies $\widetilde{\omega}_{L}'=\widetilde{\omega}_{L(p)}'$ as desired. 
\end{proof}

The conclusions in Proposition \ref{prop: trializing section exists} and \ref{prop: comparision between degrees} also hold for the stable locus of the moduli space $\mathcal{M}_{\on{GL}_n}^{d, \boldsymbol{\alpha}}$ of semistable $M$-twisted parabolic $\on{GL}_n$-Higgs bundles of degree $d$. Recall that there exists and open subscheme $(\mathcal{M}_{\on{GL}_n}^{d, \boldsymbol{\alpha}})^0\subset \mathcal{M}_{\on{GL}_n}^{d, \boldsymbol{\alpha}}$ that is a $\mathcal{P}$-torsor, where $\mathcal{P}=\on{Pic}^0(\Sigma/\mathcal{A}_P^0)$. We have the following

\begin{corollary}\label{cor: gauge form GLn}
Assume that $char(k)$ and $n$ are coprime. For each integer $d$, there exists a gauge form $\omega_d$ on the stable locus of the moduli space $\mathcal{M}_{\on{GL}_n}^{d, \boldsymbol{\alpha}}$, such that if we fix a gauge form on $\mathcal{A}_P$, the corresponding $\tilde{\omega}_d'\in \Gamma({\mathcal{P}},\Omega^{\on{top}}_{{\mathcal{P}}/\mathcal{A}_P^0})$ is independent of $d$. 
\end{corollary}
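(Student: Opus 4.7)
The plan is to adapt the proofs of Proposition \ref{prop: trializing section exists} and Proposition \ref{prop: comparision between degrees} to the $\on{GL}_n$ setting with only cosmetic modifications: replace $\on{SL}_n$ throughout by $\on{GL}_n$, drop the fixed-determinant constraint, and use the full parabolic endomorphism sheaves $\mathcal{P}ar(E)$ and $\mathcal{SP}ar(E)$ in place of their trace-zero analogues $\mathcal{P}ar_0(E)$, $\mathcal{SP}ar_0(E)$. The key group-theoretic input that must be re-verified is that $\wedge^{\on{top}}\mathfrak{gl}_n$ is a trivial $\on{GL}_n$-representation under the adjoint action; this holds because the determinant of $\on{Ad}(g): X\mapsto gXg^{-1}$ on $\mathfrak{gl}_n$ equals $\det(g)^n\cdot\det(g)^{-n}=1$.

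For existence, I would let $\mathfrak{N}^d$ denote the moduli stack of quasi-parabolic $\on{GL}_n$-bundles of degree $d$ with a $D_1$-level structure, and $\mathfrak{M}_{D_1}^d$ the corresponding stack of $K(D_1)$-twisted parabolic $\on{GL}_n$-Higgs bundles of degree $d$. The cotangent stack $T^{*}\mathfrak{N}^d$ carries a canonical symplectic form on its smooth locus via Serre duality applied to the deformation complex $[\mathcal{P}ar(E)\otimes\mathcal{O}_X(-D_1)\to \mathcal{SP}ar(E)\otimes K(D_1+q)]$; its top exterior $\widetilde{\omega}_{D_1}$ is a trivializing section. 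The descent procedure of Proposition \ref{prop: trializing section exists}—contracting with $\rho(\tau)$ along the level-structure action, then invoking the analog of isomorphism (\ref{eq: top form higgs}) together with a trivialization of $\wedge^{\on{top}}\on{End}(E_{D_2})$ obtained from a $\on{GL}_n(D_2)$-level structure—then carries over. The Rosenlicht argument of Corollary \ref{cor: constant functions} has to be replaced by its $\on{GL}_n$ analog: invertible regular functions on $\on{GL}_n(k[x]/(x^a))$ are of the form $c\cdot\det^m$, but the relevant character governing the descent is nevertheless trivial by the representation-theoretic fact above, so all invariance and descent statements still hold.

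For the independence of $\tilde{\omega}_d'$ on $d$, I would run the Abel-Jacobi argument of Proposition \ref{prop: comparision between degrees} essentially verbatim. Fixing a closed point $p\in X^0(k)$ and modifying the spectral sheaf over $\widetilde{X}^0$ by the simple skyscraper at a point $\tilde{p}$ above $p$ defines a map $\bar{a}_{\tilde{p}}$ from the Hitchin fiber of $\mathcal{M}_{\on{GL}_n}^{d,\boldsymbol{\alpha}}$ to that of $\mathcal{M}_{\on{GL}_n}^{d+1,\boldsymbol{\alpha}}$; the Hecke-correspondence identity $a_p^{*}\theta_{\mathfrak{N}^{d+1}}=p_2^{*}\theta_{\mathfrak{N}^d}$ of Proposition \ref{prop: BB} holds unchanged since both sides pull back from the tautological $1$-form on $T^{*}\mathcal{H}ecke^1_p$, an identity which is agnostic to the structure group. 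Propagating through the contractions and the $\on{End}(E_{D_2})$ trivialization yields the fiberwise equality under $\bar{a}_{\tilde{p}}$, whence $\tilde{\omega}_d'$ is independent of $d$ on $\mathcal{P}$. The one real subtlety compared with the $\on{SL}_n$ case is that $\mathfrak{M}^{d,st}$ is a $\mathbb{G}_m$-gerbe rather than a $\mu_n$-gerbe over its coarse space; this is circumvented by observing that the central scalars $k^{\times}\subset \on{GL}_n(D_1)$ act trivially on $\mathfrak{N}^d$ viewed as an algebraic space (scaling the $D_1$-trivialization by $\lambda$ is absorbed by scaling $E$ by $\lambda$), so the effective Lie algebra acting is $\mathfrak{pgl}_n(D_1)\cong\mathfrak{sl}_n(D_1)$, and contracting $\widetilde{\omega}_{D_1}$ with a top polyvector on $\mathfrak{sl}_n(D_1)\subset\mathfrak{gl}_n(D_1)$ descends it directly to the coarse space $\mathcal{M}_{D_1}^{d,st}$.
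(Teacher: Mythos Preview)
Your approach is correct but differs from the paper's. Rather than redoing the construction for $\on{GL}_n$, the paper exploits the map
\[
\Phi: \mathcal{M}_{\on{SL}_n}^{L, \boldsymbol{\alpha}}\times H^0(X, M)\times \on{Pic}^0(X)\longrightarrow \mathcal{M}_{\on{GL}_n}^{d, \boldsymbol{\alpha}},
\]
which identifies the target with the quotient of the source by the free diagonal $\Gamma$-action $(\boldsymbol{E},s,F)\mapsto(\boldsymbol{E}\otimes\gamma^{-1},s,F\otimes\gamma)$. The gauge form $\omega_d$ is obtained by descending $p_1^*\omega_L\wedge p_2^*\omega_2\wedge p_3^*\omega_3$ (with $\omega_2$ a gauge form on the affine space $H^0(X,M)$ and $\omega_3$ a translation-invariant gauge form on $\on{Pic}^0(X)$), using the $\Gamma$-invariance of $\omega_L$ already established in Proposition \ref{prop: trializing section exists}. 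Independence of $\tilde{\omega}_d'$ on $d$ is then deduced from Proposition \ref{prop: comparision between degrees} by analyzing how $\Phi$ interacts with the Hitchin fibration: on fibers it becomes the isogeny $\on{Prym}(\Sigma_a)\times\on{Pic}^0(X)\to\on{Pic}^0(\Sigma_a)$, so the relative form on $\mathcal{P}$ is determined by $\tilde{\omega}_L'$ and $\omega_3$, neither of which depends on the degree. This route is shorter, avoids reverifying Rosenlicht-type statements for $\on{GL}_n$ or $\on{PGL}_n$, and sidesteps the $\mathbb{G}_m$-gerbe issue entirely; it is also where the hypothesis that $\on{char}(k)$ and $n$ are coprime enters transparently (so that $\Gamma$ is \'etale and the quotient is well-behaved). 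Your direct approach would in principle work, but the last paragraph---descending along the $\on{PGL}_n(D_1)$-torsor to the coarse space using $\mathfrak{pgl}_n\cong\mathfrak{sl}_n$---needs to be made precise and itself silently uses the coprimality hypothesis.
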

\begin{proof}
Let $L$ be a line bundle on $X$ at degree $d$. The moduli spaces $\mathcal{M}_{\on{GL}_n}^{d, \boldsymbol{\alpha}}$ and $\mathcal{M}_{\on{SL}_n}^{L, \boldsymbol{\alpha}}$ are related by the following map:
\[
\Phi: \mathcal{M}_{\on{SL}_n}^{L, \boldsymbol{\alpha}}\times H^0(X, M)\times \on{Pic}^0(X)\longrightarrow \mathcal{M}_{\on{GL}_n}^{d, \boldsymbol{\alpha}}
\]
\[
((E,\phi,  E_q^{\bullet}, \varphi), s, F)\mapsto (E\otimes F,  \phi\otimes \on{id}+\on{id}\otimes s, E_q^{\bullet}\otimes F_q,). 
\]
The scheme on the left-hand side admits a free $\Gamma$-action defined by 
\[\gamma\cdot(E, s, F)=(E\otimes \gamma^{-1}, s, F\otimes \gamma),
\] 
and $\Phi$ can be identified with the quotient map of this $\Gamma$-action. In Proposition \ref{prop: trializing section exists}, we've constructed a $\Gamma$-invariant gauge form $\omega_L$ on $\mathcal{M}_{\on{SL}_n}^{L, \boldsymbol{\alpha}}$. We fix a gauge form $\omega_2$ on $H^0(X, M)$ and a translation invariant gauge form $\omega_3$ on $\on{Pic}^0(X)$. The gauge form $p_1^{*}\omega_L\wedge p_2^*\omega_2\wedge p_3^*\omega_3$ is $\Gamma$-invariant, therefore descends to a gauge form $\omega_d$ on $\mathcal{M}_{\on{GL}_n}^{d, \boldsymbol{\alpha}}$. We still need to show that when fixing a gauge form on $\mathcal{A}_P$, the corresponding $\tilde{\omega}_d'\in \Gamma({\mathcal{P}},\Omega^{\on{top}}_{{\mathcal{P}}/\mathcal{A}_P^0})$ is independent of $d$. The map $\Phi$ induces an isomorphism
\[
    \Phi_{\mathcal{A}}: \mathcal{A}\times H^{0}(X,M)\longrightarrow \mathcal{A}_P
\]
of the Hitchin base. A gauge form on $\mathcal{A}_P$ together with $\omega_2$ on $H^{0}(X,M)$ induces a gauge form on $\mathcal{A}$ through $\Phi_{\mathcal{A}}$. Now we discuss the relation between the corresponding $\tilde{\omega}_L'\in \Gamma(\check{\mathcal{P}},\Omega^{\on{top}}_{\check{\mathcal{P}}/\mathcal{A}^0})$ and $\tilde{\omega}_d'\in \Gamma({\mathcal{P}},\Omega^{\on{top}}_{{\mathcal{P}}/\mathcal{A}_P^0})$. Let $a\in \mathcal{A}_P(\bar{k})$ be a geometric point and let $\Phi_{\mathcal{A}}^{-1}(a)=(a', s)$, where $a'\in\mathcal{A}(\bar{k})$ and $s\in H^{0}(X_{\bar{k}}, M)$. Note that there is a canonical isomorphism $\Sigma_{a'}\cong \Sigma_a$ such that the morphism
\[
\Phi_{(a', s)}: h_{\on{SL}_n}^{-1}(a')\times \{s\}\times \on{Pic}^0(X)\longrightarrow h_{\on{GL}_n}^{-1}(a). 
\]
is identified with 
\[
\on{Prym}_{L}(\Sigma_{a'})\times \on{Pic}^0(X)\to \on{Pic}^d(\Sigma_{a})
\]
\[
(N, F)\mapsto N\otimes p_X^{*}F.
\]
It follows that the gauge form $p_1^*(\tilde{\omega}_L')_{a'}\wedge p_2^*\omega_3$ on $\on{Prym}(\Sigma_{a'})\times \on{Pic}^0(X)$ descends to $(\tilde{\omega}_d')_{a}$ on ${\mathcal{P}}_{a}=\on{Pic}^0(\Sigma_a)$ through the quotient map
\[
\on{Prym}(\Sigma_{a'})\times \on{Pic}^0(X)\longrightarrow (\on{Prym}(\Sigma_{a'})\times \on{Pic}^0(X))/\Gamma \cong \on{Pic}^0(\Sigma_a).
\] 
Since $(\tilde{\omega}_L')$ is independent of $L$ by Proposition \ref{prop: comparision between degrees}, $(\tilde{\omega}_d')$ is independent of $d$. 
\end{proof}

\section{Topological mirror symmetry for parabolic Higgs bundles}\label{section: proof of mirror symmetry}
\subsection{Proof of topological mirror symmetry}
The goal of this subsection is to prove Theorem \ref{thm: main theorem}.  We would like to apply $p$-adic integration to the dual pair of Hitchin systems 
\[
    (\mathcal{M}^{L_d, \boldsymbol{\alpha}}_{\on{SL}_n}(X), \mathcal{M}^{e, \boldsymbol{\alpha}'}_{\on{PGL}_n}(X),\mathcal{A}, \check{\alpha}^{e'}, \hat{\alpha}^{d'})
\]
in Proposition \ref{prop: dual Hitchin system} given by the moduli spaces of semistable parabolic $\on{SL}_n$- and $\on{PGL}_n$-Higgs bundles on a smooth complex projective curve $X$. The first step is to construct a dual pair of Hitchin systems over a finitely generated $\mathbb{Z}$-subalgebra $R\subset \mathbb{C}$ which acts as an $R$-model for the Hitchin systems over $\mathbb{C}$. Note that this dual pair of Hitchin systems depends on the data $(X, M, q)$, where $X$ is a smooth complex projective curve, $M$ is a line bundle on $X$, and $q\in X_\mathbb{C}(\mathbb{C})$ is a marked point. Let $X_R$ be an $R$-model of $X$ for a finitely generated $\mathbb{Z}$-subalgebra $R\subset \mathbb{C}$. We assume $n$ is invertible in $R$ and $R$ contains all the $n^{2g}$-th roots of unity. By possibly extending $R$, we assume $X_R$ is smooth and projective over $R$, the line bundle $M$ extends to a line bundle $M_R$ on $X_R$, and the closed point $q\in X_\mathbb{C}(\mathbb{C})$ extends to an $R$-point $q\in X_R(R)$. We also assume the finite group $\Gamma=\on{Pic}(X_R)[n]$ is constant over $R$. We consider the moduli spaces of semistable parabolic Higgs bundles over $X_R$, then the dual pair of Hitchin systems 
\[
(\mathcal{M}^{L_d, \boldsymbol{\alpha}}_{\on{SL}_n}(X_R), \mathcal{M}^{e, \boldsymbol{\alpha}'}_{\on{PGL}_n}(X_R),\mathcal{A}_R, \check{\alpha}^{e'}, \hat{\alpha}^{d'})
\]
is an $R$-model for the dual pair of Hitchin systems over $\mathbb{C}$.

Recall that in Proposition \ref{prop: trializing section exists}, we have constructed a $\Gamma$-invariant gauge form $\omega_d$ on the moduli space $\mathcal{M}^{L_d, \boldsymbol{\alpha}}_{\on{SL}_n}(X)$. By possibly extending $R$, we assume that $\omega_d$ extends to a gauge form $\omega_{R,d}$ on $\mathcal{M}^{L_d, \boldsymbol{\alpha}}_{\on{SL}_n}(X_R)$ and $\omega_e$ extends to a gauge form $\omega_{R, e}$ on $\mathcal{M}^{L_e, \boldsymbol{\alpha}'}_{\on{SL}_n}(X_R)$. The gauge form $\omega_{R, e}$ remains $\Gamma$-invariant and it descends to a gauge form on $\mathcal{M}^{e, \boldsymbol{\alpha}'}_{\on{PGL}_n}(X_R)$. Recall that there is an open subscheme of $\mathcal{M}^{L_d, \boldsymbol{\alpha}}_{\on{SL}_n}(X_R)$ (resp. $\mathcal{M}^{e, \boldsymbol{\alpha}'}_{\on{PGL}_n}(X_R)$) that is a $\check{\mathcal{P}}_R$ (resp. $\hat{\mathcal{P}}_R$)-torsor, and the two abelian schemes $\check{\mathcal{P}}_R$ and $\hat{\mathcal{P}}_R$ are related by the \'etale isogeny
\[
\phi: \check{\mathcal{P}}_R\longrightarrow \check{\mathcal{P}}_R/\Gamma\cong \hat{\mathcal{P}}_R. 
\]
\begin{proof}[Proof of Theorem \ref{thm: main theorem}] We would like to prove the following equality of (stringy) $E$-polynomials:
\begin{equation}
    E(\mathcal{M}^{L_d, \boldsymbol{\alpha}}_{\on{SL}_n}(X); u,v)=E_{st}(\mathcal{M}^{e, \boldsymbol{\alpha}'}_{\on{PGL}_n}(X), \hat{\alpha}^{d'}; u,v). 
\end{equation}
Since $\mathcal{M}^{L_d, \boldsymbol{\alpha}}_{\on{SL}_n}(X)$ is a smooth complex variety, its stringy $E$-polynomial doesn't depend on the choice of the gerbe, therefore it is equivalent to show 
\begin{equation}\label{eq: mirror symmetry}
     E_{st}(\mathcal{M}^{L_d, \boldsymbol{\alpha}}_{\on{SL}_n}(X), \check{\alpha}^{e'}; u,v)=E_{st}(\mathcal{M}^{e, \boldsymbol{\alpha}'}_{\on{PGL}_n}(X), \hat{\alpha}^{d'}; u,v). 
\end{equation}
By Theorem \ref{thm: stringy point count vs E-polynomial}, in order to prove the desired equality (\ref{eq: mirror symmetry}) of (stringy) $E$-polynomials, it is enough to show that for any ring homomorphism $R\to \mathbb{F}_q$, the $\on{SL}_n$ and $\on{PGL}_n$-moduli space have the same twisted point-count. By Theorem \ref{thm: stringy point count=p-adic integral}, equality of stringy point-counts can be reduced to proving the equality of $p$-adic integrals on the $\on{SL}_n$- and $\on{PGL}_n$-moduli space $\mathcal{M}^{L_d, \boldsymbol{\alpha}}_{\on{SL}_n}(X_{\mathcal{O}_F})$ and $\mathcal{M}^{e, \boldsymbol{\alpha}'}_{\on{PGL}_n}(X_{\mathcal{O}_F})$, which are constructed from $\mathcal{M}^{L_d, \boldsymbol{\alpha}}_{\on{SL}_n}(X_R)$ and $\mathcal{M}^{e, \boldsymbol{\alpha}'}_{\on{PGL}_n}(X_R)$ by base-change through $R\to\mathbb{F}_q\to \mathbb{F}_q[[t]]=\mathcal{O}_F$. Note that by Proposition \ref{prop: dual Hitchin system}, 
\[
(\mathcal{M}^{L_d, \boldsymbol{\alpha}}_{\on{SL}_n}(X_R), \mathcal{M}^{e, \boldsymbol{\alpha}'}_{\on{PGL}_n}(X_R),\mathcal{A}_R, \check{\alpha}^{e'}, \hat{\alpha}^{d'})
\]
forms a dual pair of weak abstract Hitchin systems, therefore we can apply discussions in Subsection \ref{subsec: $p$-adic integration and mirror symmetry} to simplify the computation of $p$-adic integrals. 
Recall that we've constructed $\Gamma$-invariant gauge forms $\omega_{R,d}$ on $\mathcal{M}^{L_d, \boldsymbol{\alpha}}_{\on{SL}_n}(X_R)$ and $\omega_{R,e}$ on $\mathcal{M}^{L_e, \boldsymbol{\alpha}'}_{\on{SL}_n}(X_R)$. At the end of Subsection \ref{subsection: abstract hitchin systems}, we've associated with them relative gauge forms $\tilde{\omega}'_{R, d}$ and $\tilde{\omega}'_{R, e}$ on $\check{\mathcal{P}}_R/\mathcal{A}_R$. By Theorem \ref{main thm: p-adic integral for Hitchin} and Remark \ref{rk: p-adic integral for Hitchin}, in order to prove the desired equality of $p$-adic integrals for any base-change $R\to \mathbb{F}_q[[t]]$, it is enough to show that $\tilde{\omega}'_{R, d}=\tilde{\omega}'_{R, e}$. In Proposition \ref{prop: comparision between degrees}, we have proved the corresponding equality $\tilde{\omega}'_{ d}=\tilde{\omega}'_{e}$ of relative gauge forms on $\check{\mathcal{P}}\cong \check{\mathcal{P}}_R\otimes_R\mathbb{C}$. Since $\check{\mathcal{P}}_R$ is smooth over $R$, the equality over $\mathbb{C}$ implies the desired equality over $R$. 
\end{proof} 

\subsection{Proof of independence on degree}
\begin{proof}[Proof of Theorem \ref{thm: main theorem independence} Part (a)] Let $\boldsymbol{\alpha}_1$ (resp. $\boldsymbol{\alpha}_2$) be a set of parabolic weights that is generic for degree $d_1$ (resp. $d_2$). Our goal is to prove the following equality of $E$-polynomials
\begin{equation}\label{eq: E-polynomial not depend on degree}
  E(\mathcal{M}^{{d_1}, \boldsymbol{\alpha}_1}_{\on{GL}_n}(X); u,v)=E(\mathcal{M}^{{d_2}, \boldsymbol{\alpha}_2}_{\on{GL}_n}(X); u,v).    
\end{equation}
For simplicity of notations, we write $\mathcal{M}^{{d_i}, \boldsymbol{\alpha}_i}_{\on{GL}_n}(X)=\mathcal{M}^{d_i}(X)$. Let $X_R$ be an $R$-model for $X$ such that the gauge form $\omega_{d_i}$ constructed in Corollary \ref{cor: gauge form GLn} extends to a gauge form $\omega_{R,d_i}$ on $X_R$ for $i=1,2$. By Theorem \ref{thm: stringy point count vs E-polynomial} and Theorem \ref{thm: stringy point count=p-adic integral}, proving \ref{eq: E-polynomial not depend on degree} is reduced to showing the following equality of $p$-adic integrals
\begin{equation}\label{eq: different degree p-adic}
    \int_{\mathcal{M}^{d_1}(X_{\mathcal{O}_F})(\mathcal{O}_F)}|\omega_{\mathcal{O}_F,d_1}|=\int_{\mathcal{M}^{d_2}(X_{\mathcal{O}_F})(\mathcal{O}_F)}|\omega_{\mathcal{O}_F,d_2}|
\end{equation}
for every ring homomorphism $R\longrightarrow \mathcal{O}_F=\mathbb{F}_q[[t]]$ where $\mathbb{F}_q$ is a finite field. For $i=1,2$, Proposition \ref{prop: properties of p-adic integration} implies
\[
\int_{\mathcal{M}^{d_i}(X_{\mathcal{O}_F})(\mathcal{O}_F)}|\omega_{\mathcal{O}_F,d_i}|= \int_{a\in \mathcal{A}_{\mathcal{O}_F}({\mathcal{O}_F)}^{\flat}}|\omega_{\mathcal{A}_{\mathcal{O}_F}}|\int_{h_{i}^{-1}(a)(F)}|(\tilde{\omega}_{\mathcal{O}_F,d_i})_{a_F}|,
\]
therefore proving (\ref{eq: different degree p-adic}) is reduced to proving
\begin{equation}\label{eq: p-adic integration hichin fiber}
    \int_{h_{1}^{-1}(a)(F)}|(\tilde{\omega}_{\mathcal{O}_F,d_1})_{a_F}|=\int_{h_{2}^{-1}(a)(F)}|(\tilde{\omega}_{\mathcal{O}_F,d_2})_{a_F}|
\end{equation}
for all $a\in \mathcal{A}_{\mathcal{O}_F}(\mathcal{O}_F)\cap \mathcal{A}^0_{\mathcal{O}_F}(F)$ with corresponding $a_F\in \mathcal{A}^0_{\mathcal{O}_F}(F)$. Since $h_{i}^{-1}(a_F)$ is a ${\mathcal{P}}_{a_F}$-torsor, we have
\[ \int_{h_{i}^{-1}(a)(F)}|(\tilde{\omega}_{\mathcal{O}_F,d_i})_{a_F}| = \begin{cases}
  \displaystyle\int_{{\mathcal{P}}_{a_F}(F)}|(\tilde{\omega}'_{\mathcal{O}_F,d_i})_{a_F}| & \text{if } {h_{i}^{-1}(a)(F)} \text{ is non-empty},\\\\
  0, & \text{if } {h_{i}^{-1}(a)(F)} \text{ is empty}.
  \end{cases}
\]
Note that Corollary \ref{cor: gauge form GLn} implies $\tilde{\omega}'_{\mathcal{O}_F,d_1}=\tilde{\omega}'_{\mathcal{O}_F,d_2}$. Therefore proving (\ref{eq: p-adic integration hichin fiber}) is reduced to proving
\[
{h_{1}^{-1}(a_F)(F)} \text{ is non-empty} \Longleftrightarrow {h_{2}^{-1}(a_F)(F)} \text{ is non-empty}. 
\]
By Theorem \ref{theorem: spectral data GLn}, this is the same as showing
\begin{equation}\label{eq: picard non empty}
 \on{Pic}^{d_1-d_{\boldsymbol{m}}}(\Sigma_{a_F})(F) \text{ is non-empty} \Longleftrightarrow \on{Pic}^{d_2-d_{\boldsymbol{m}}}(\Sigma_{a_F})(F) \text{ is non-empty}.    
\end{equation} 
Recall that $\boldsymbol{m}=(m_1, m_2,\dots, m_r)$ is our fixed parabolic multiplicities for the partial flag structure. By Lemma \ref{lemma: criterion for generic weights}, in order to prove (\ref{eq: picard non empty}), it is enough to show that for any integer $d$ such that $\on{gcd}(d, m_1, m_2, \dots, m_r)=1$, we have
\begin{equation}\label{eq: picard non empty 2}
 \on{Pic}^{d-d_{\boldsymbol{m}}}(\Sigma_{a_F})(F) \text{ is non-empty} \Longleftrightarrow \on{Pic}^{1-d_{\boldsymbol{m}}}(\Sigma_{a_F})(F) \text{ is non-empty}.    
\end{equation}

We first fix some notations. Let $\boldsymbol{\lambda}=\lambda_1^{b_1}\lambda_2^{b_2}\cdots\lambda_t^{b_t}$ be the partition of $n$ conjugate to $\boldsymbol{m}$, where $\lambda_1, \lambda_2,\dots, \lambda_t$ are distinct positive integers and $b_i$ is the multiplicity of $\lambda_i$. We denote $m_{\on{gcd}}=\on{gcd}(m_1,m_2,\dots, m_r)$ the greatest common divisor of $m_1, m_2, \dots, m_r$. Note that since $\boldsymbol{m}$ and $\boldsymbol{\lambda}$ are conjugate partitions, we also have $m_{\on{gcd}}=\on{gcd}(b_1, b_2,\dots, b_t)$. 

The proof of (\ref{eq: picard non empty 2}) relies on the geometry of $\Sigma_{a_F}$ described in Remark \ref{rk: singular spectral curves}. Above the marked point $q$, the spectral curve $\widetilde{X}_{a_F}$ at $a_F\in\mathcal{A}^0(F)$ locally looks like $\on{Spec}F[[x,y]]/(f)$ for some $f\in F[[x,y]]$. By Remark \ref{rk: singular spectral curves}, $f$ can be factorized as 
\[ f= \prod_{i=1}^{t}\prod_{j=1}^{b_i}(y^{\lambda_i}-a_{ij}x), a_{ij}\in \bar{F}[[x,y]]^{\times}
\]
in $\bar{F}[[x,y]]$. We denote $f_i=\displaystyle\prod_{j=1}^{b_i}(y^{\lambda_i}-a_{ij}x)$ for $i=1,2,\dots, t$. Since the coefficients of $f$ lie $F$, we have $f_i\in F[[x,y]]$ for each $i$. Recall that $\Sigma_{a_F}$ is the normalization of the spectral curve $\widetilde{X}_{a_F}$ which can be obtained by consecutive blow-ups. In this procedure, the local branches corresponding to $f_1, f_2,\dots,f_t$ are separated apart. It follows that $\Sigma_{a_F}$ admits a Cartier divisor of degree $b_i$ for each $i=1,2,\dots t$. Since $m_{\on{gcd}}=\on{gcd}(b_1, b_2,\dots, b_t)$, we have $\on{Pic}^{m_{\on{gcd}}}(\Sigma_{a_F})(F)$ is non-empty. 

Now we prove (\ref{eq: picard non empty 2}). Let $d$ be an integer such that $\on{gcd}(d, m_1, m_2, \dots, m_r)=1$. Let $a,b$ be integers such that $ad+bm_{\on{gcd}}=1$. Recall that \[d_{\boldsymbol{m}}=-\displaystyle\frac{1}{2}n(n-1)(\on{deg}M+1)+\displaystyle\sum_{i=1}^{r}\frac{1}{2}m_i(m_i-1).
\] 
We note that $2d_{\boldsymbol{m}}$ is divisible by $m_{\on{gcd}}$. We further observe that $(a-1)d_{\boldsymbol{m}}$ is divisible by $m_{\on{gcd}}$. Indeed, if $m_{\on{gcd}}$ is odd, the statement follows from $2d_{\boldsymbol{m}}$ being divisible by $m_{\on{gcd}}$; if $m_{\on{gcd}}$ is even, then $(a-1)$ must also be even. Similarly,  $(d-1)d_{\boldsymbol{m}}$ is also divisible by $m_{\on{gcd}}$. For the $``\Longrightarrow"$ direction of (\ref{eq: picard non empty 2}), let $L_1\in \on{Pic}^{d-d_{\boldsymbol{m}}}(\Sigma_{a_F})(F)$ and $L_2\in \on{Pic}^{m_{\on{gcd}}}(\Sigma_{a_F})(F)$, then $(L_1)^a(L_2)^{b+\frac{(a-1)d_{\boldsymbol{m}}}{m_{\on{gcd}}}}$ gives an element in $\on{Pic}^{1-d_{\boldsymbol{m}}}(\Sigma_{a_F})(F)$. For the $``\Longleftarrow"$ direction of (\ref{eq: picard non empty 2}), let $L_1\in \on{Pic}^{1-d_{\boldsymbol{m}}}(\Sigma_{a_F})(F)$ and $L_2\in \on{Pic}^{m_{\on{gcd}}}(\Sigma_{a_F})(F)$, then $(L_1)^d(L_2)^{\frac{(d-1)d_{\boldsymbol{m}}}{m_{\on{gcd}}}}$ gives an element in $\on{Pic}^{d-d_{\boldsymbol{m}}}(\Sigma_{a_F})(F)$.
\end{proof}

\begin{proof}[Proof of Theorem \ref{thm: main theorem independence} Part (b)]
In Corollary \ref{cor: gauge form GLn}, we've constructed gauge form $\omega_{d_i}$ on $\mathcal{M}^{d_i}(X)$ for $i=1,2$. Let $\mathcal{O}_F=\mathbb{F}_p[[t]]$, let $X_{\mathcal{O}_F}=X\times_{\on{Spec}\mathbb{F}_p}\on{Spec}\mathcal{O}_F$ and let $\omega_{\mathcal{O}_F, {d_i}}$ be the pull-back of $\omega_{d_i}$ to $\mathcal{M}^{d_i}(X_{\mathcal{O}_F})$. By Theorem \ref{thm: stringy point count=p-adic integral}, proving $\#\mathcal{M}^{d_1}(X)=\#\mathcal{M}^{d_2}(X)$ can be reduced to proving the same equality of $p$-adic integral as in (\ref{eq: different degree p-adic}). Same arguments as in the proof of Theorem \ref{thm: main theorem independence} Part (a) lead to the desired result. 
\end{proof}

\end{document}